\let\chapter\section
\DeclareMathOperator{\GL}{GL}
\DeclareMathOperator{\SL}{SL}
\DeclareMathOperator{\Trans}{Trans}
\DeclareMathOperator{\Aff}{Aff}
\DeclareMathOperator{\Stab}{Stab}
\DeclareMathOperator{\sgn}{sgn}
\DeclareMathOperator{\sr}{sr}
\DeclareMathOperator{\concat}{concat}
\renewcommand{\div}{\mathrm{div}}
\def\quotient#1#2{%
    \raise0.5ex\hbox{$#1$}\big/\lower0.5ex\hbox{$#2$}%
}
\def\quotientr#1#2{%
    \lower0.5ex\hbox{$#1$}\big\backslash\raise0.5ex\hbox{$#2$}%
}
\def\smatr#1#2#3#4{%
  \bigl( \begin{smallmatrix} #1 & #2 \\ #3 & #4  \end{smallmatrix}\bigr)%
}
\def\matr#1#2#3#4{%
   \begin{pmatrix} #1 & #2 \\ #3 & #4  \end{pmatrix}%
}
\def\klvek#1#2{%
\bigl(\begin{smallmatrix} #1\\#2 \end{smallmatrix}\bigr)%
}
\newcommand{\PointL}[2]{\bigl(#1;#2\bigr)}
\newcommand{\Grenze}{35+24w}
\newcommand{\rp}{rational part}
\newcommand{\ip}{irrational part}
\newcommand{\gz}{root-looped 4-valent tree}
\newcommand{\EE}{\ensuremath{\mathbf{1}}}
\newcommand{\ZZ}{\ensuremath{\mathbb{Z}}}
\newcommand{\RR}{\ensuremath{\mathbb{R}}}
\newcommand{\CC}{\ensuremath{\mathbb{C}}}
\newcommand{\QQ}{\ensuremath{\mathbb{Q}}}
\newcommand{\HH}{\ensuremath{\mathbb{H}}}
\newcommand{\NN}{\ensuremath{\mathbb{N}}}
\newcommand{\inv}{\ensuremath{^{-1}}}
\newcommand{\LapC}{\Delta_G}
\newcommand{\om}{w}
\newcommand{\PN}[1][]{\mathcal{P}_{{N#1}}}
\definecolor{grau}{gray}{0.8}
\begin{document}
\setcounter{tocdepth}{2}
\selectlanguage{english}
\title{On the Critical Exponent of Infinitely Generated Veech Groups}
\author{Ralf Lehnert}
\date{}
\maketitle
\vspace*{-3em}
\begin{abstract}\small
We prove the existence of Veech groups having a critical exponent strictly greater than any elementary Fuchsian group (i.e.\@ $>\frac{1}{2}$) but strictly smaller than any lattice (i.e.\@ $<1$). More precisely, every affine covering of a primitive L-shaped Veech surface $X$ ramified over the singularity and a non-periodic connection point $P\in X$ has such a Veech group. Hubert and Schmidt (\cite{HSInf}) showed that these Veech groups are infinitely generated and of the first kind. We use a result of Roblin and Tapie (\cite{RobTap}) which connects the critical exponent of the Veech group of the covering with the Cheeger constant of the Schreier graph of $\SL(X)/\Stab_{\SL(X)}(P)$. The main task is to show that the Cheeger constant is strictly positive, i.e.\@ the graph is non-amenable. In this context, we introduce a measure of the complexity of connection points that helps to simplify the graph to a forest for which non-amenability can be seen easily.
\end{abstract}
\tableofcontents

\newtheorem{mthm}{Theorem}
\renewcommand*{\themthm}{\Alph{mthm}}
\theoremstyle{definition}
\newtheorem{defi}{Definition}[section]
\newaliascnt{theorem}{mthm}
\newaliascnt{theoremc}{defi}
\newaliascnt{proposition}{defi}
\newaliascnt{kproposition}{defi}
\newaliascnt{lemma}{defi}
\newaliascnt{remark}{defi}
\newaliascnt{corol}{defi}
\newaliascnt{conjecture}{defi}
\newaliascnt{example}{defi}
\newaliascnt{convention}{defi}
\newaliascnt{question}{defi}
\newtheorem{exa}[example]{Example}
\newtheorem{conv}[convention]{Convention}
\newtheorem{rem}[remark]{Remark}
\theoremstyle{plain}
\newtheorem{thm}[theorem]{Theorem}
\newtheorem{thmD}{Theorem}
\newtheorem{propD}[thmD]{Proposition}
\newtheorem{lemD}[thmD]{Lemma}
\newtheorem{thmc}[theoremc]{Theorem}
\newtheorem{prop}[proposition]{Proposition}
\newtheorem{kprop}[kproposition]{Key Proposition}
\newtheorem{lem}[lemma]{Lemma}
\newtheorem{cor}[corol]{Corollary}
\newtheorem{ques}[question]{Question}
\newtheorem{conj}[conjecture]{Conjecture}

\providecommand*{\mthmautorefname}{Main Theorem}
\providecommand*{\thmautorefname}{Theorem}
\providecommand*{\thmDautorefname}{Theorem}
\providecommand*{\theoremcautorefname}{Theorem}
\providecommand*{\theoremautorefname}{Theorem}
\providecommand*{\propositionautorefname}{Proposition}
\providecommand*{\kpropositionautorefname}{Key Proposition}
\providecommand*{\lemmaautorefname}{Lemma}
\providecommand*{\remarkautorefname}{Remark}
\providecommand*{\defiautorefname}{Definition}
\providecommand*{\corolautorefname}{Corollary}
\providecommand*{\exampleautorefname}{Example}
\providecommand*{\conventionautorefname}{Convention}
\providecommand*{\questionautorefname}{Question}
\def\algorithmautorefname{Algorithm}
\def\sectionautorefname{Section}%
\def\subsectionautorefname{Section}%
\def\subsubsectionautorefname{Section}

\addsec{Introduction}
Translation surfaces are Riemann surfaces $X$ that consist of a finite set of euclidean polygons glued together at parallel sides by translations. The \emph{affine group} $\Aff(X)$ of $X$ consists of all orientation-preserving self-homeomorphisms $f: X \rightarrow X$ that map the set of vertices $S(X)$ of the polygons (also called \emph{singularities}) to itself and are locally affine on $X-S(X)$. We consider only connected translation surfaces of finite volume, thus the linear part -- i.e. the derivative -- of $f$ is a globally constant $2\times 2$-matrix $Df=A$ of determinant $1$. The image of the affine group under the derivation map is the \emph{Veech group} $\SL(X)$. The Veech group is a discrete subgroup of $\SL_2(\RR)$ and thus a Fuchsian group.

There is a rough sizing of Fuchsian groups distinguishing them by the size of their limit set $\Lambda$: elementary Fuchsian groups and non-elementary ones which are again subdivided into those of the first and those of the second kind. The ``largest'' Fuchsian groups are \emph{lattices}, finitely generated of the first kind, followed by the infinitely generated ones of the first kind.

A refinement of this classification is to additionally consider the \emph{critical exponent} $\delta(\Gamma)$, defined as the infimum of all $a\in \RR$ such that the \emph{Poincaré series} $\sum_{\gamma\in\Gamma}\exp(-a\rho_\HH(i,\gamma\circ i))$ converges. By \cite{BJ} the critical exponent also is the Hausdorff dimension of the conical limit set. For all infinite Fuchsian groups the bounds $0\leq \delta \leq 1$ hold.

The main result of this paper is the following:
\begin{thm}\label{mainresult}
There exist translation surfaces whose Veech groups have critical exponent strictly between $\frac{1}{2}$ and $1$.
\end{thm}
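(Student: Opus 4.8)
The plan is to instantiate the chain of reductions sketched in the abstract with a concrete example, and then to spend essentially all of the effort on the one genuinely hard point, the non-amenability of a Schreier graph.

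\emph{Choosing the data.} I would fix a single explicit primitive L-shaped Veech surface $X$ --- for concreteness one of the ``golden'' ones, with parameter $w=\tfrac{1+\sqrt5}{2}$, so that $\SL(X)$ is an explicit non-cocompact lattice (a Hecke-type triangle group) with a short generating set and a fundamental domain one can draw --- together with a connection point $P\in X$ whose $\SL(X)$-orbit is infinite. Non-periodic connection points of this kind exist by Hubert--Schmidt; concretely $P$ should have coordinates in the trace field $\QQ(w)$, satisfy the combinatorial criterion that every separatrix through $P$ closes up to a saddle connection, and yet fail to be a periodic point. Forming the affine cover $Y\to X$ branched over $S(X)\cup\{P\}$ one has $\SL(Y)=\Stab_{\SL(X)}(P)=:\Gamma$, and by \cite{HSInf} this $\Gamma$ is infinitely generated and Fuchsian of the first kind. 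In particular $\delta(\Gamma)\le1$ since $\Gamma$ is Fuchsian, and $\delta(\Gamma)\ge\tfrac12$ because $\Gamma$ is of the first kind; the task is to make both inequalities strict.

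\emph{The dynamical reduction.} By the theorem of Roblin--Tapie (\cite{RobTap}) applied to $\Gamma\le\SL(X)$, with $\mathcal{G}$ the Schreier coset graph of $\SL(X)/\Gamma$ --- equivalently the orbit graph of $\SL(X)$ acting on $\SL(X)\cdot P$ with respect to a fixed finite generating set --- one has $\delta(\Gamma)=\delta(\SL(X))=1$ if and only if $\mathcal{G}$ is amenable, i.e.\ has vanishing Cheeger constant; and the quantitative form, combined with the first-kind property, pins $\delta(\Gamma)$ strictly inside $(\tfrac12,1)$ as soon as the Cheeger constant is positive. The graph $\mathcal{G}$ is connected, it is infinite because $P$ is non-periodic, and it has bounded degree because $\SL(X)$ is finitely generated. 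So the whole theorem comes down to: \emph{the orbit graph of $\SL(X)$ on $\SL(X)\cdot P$ is non-amenable.}

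\emph{Non-amenability --- the main obstacle.} This is where the work sits. The vertex set is the infinite orbit $\SL(X)\cdot P$ of connection points, and I would attach to each point a \emph{complexity} $c\colon\SL(X)\cdot P\to\NN$ measuring its arithmetic size --- essentially the denominators needed to express its coordinates over the ring of integers of $\QQ(w)$, equivalently the minimal ``depth'' of a path from $P$ to it built out of a preferred, continued-fraction-like family of affine moves. Two things must be proved: (i) each standard generator of $\SL(X)$ alters $c$ by a bounded amount, and outside a finite ``core'' of low-complexity vertices at least one generator \emph{strictly raises} $c$; (ii) at every vertex the generators raising $c$ outnumber those lowering it, so that after collapsing the (finite, uniformly bounded) level pieces of $c$ the orbit graph becomes quasi-isometric to an infinite rooted forest of minimum degree $\ge3$. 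Such a forest has Cheeger constant $\ge1$, hence is non-amenable, and non-amenability passes back through the quasi-isometry. The delicate part is the combinatorics of (i)--(ii): showing that no generator of the triangle group can lower the complexity in ``too many'' directions simultaneously --- so that the collapsed graph genuinely is forest-like rather than, say, quasi-isometric to a product with a line --- is the step I expect to be hardest; once that is in place the remaining estimates are bookkeeping.
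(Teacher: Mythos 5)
Your high-level reduction chain coincides with the paper's: pass to the stabilizer $\Gamma=\SL(X;P)$ via Hubert--Schmidt and Gutkin--Judge, get $\delta<1$ from the Roblin--Tapie comparison once the Schreier/orbit graph is non-amenable, and get the lower bound from the group being non-elementary with parabolics. One small misstep there: the strict inequality $\delta>\tfrac12$ does not come from ``the quantitative form combined with the first-kind property'' --- Roblin--Tapie only gives the upper bound --- but from the Beardon/Patterson theorem for non-elementary groups containing a parabolic element, which applies because $\SL(X;P)$ contains the parabolics fixing saddle connections through $P$.

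The genuine gap is in the one step you yourself identify as the heart of the matter: the complexity function. The quantity you propose --- ``the denominators needed to express the coordinates over the ring of integers of $\QQ(w)$'' --- is an \emph{orbit invariant}: since all Veech-group entries and all periods of $L_{D,\epsilon}$ lie in $\ZZ[w]$, the least common denominator $N(P)$ is constant along $\SL(X)\circ P$ (this is exactly \autoref{thm: orbitssameN}), so no generator can ever raise it and your properties (i)--(ii) are vacuously false for it. Your fallback, ``minimal depth of a path from $P$'', is just graph distance, and the assertion that generators generically increase it with surplus is precisely the non-amenability-type statement to be proven, so the argument becomes circular. The measure that actually works is $s(Q)=|x_i|+|y_i|$, the size of the \emph{irrational parts} of the coordinates (bounded coordinates force $x_r\approx -x_iw$, so $s$ is a genuine complexity that is not orbit-invariant); and the increase statement is proven not for the standard generators --- a single application of $A$ or $B$ changes $s$ by an unbounded amount of either sign, so your hypothesis that each generator alters $c$ boundedly is incompatible with any workable $c$ --- but for fixed large powers $A^{\pm k},B^{\pm l}$, which one may add to the generating set because amenability is independent of the finite generating set (Woess). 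After deleting the finitely many points periodic under both $A$ and $B$ and all edges not labelled $A^{\pm k},B^{\pm l}$ (both operations can only help), explicit cylinder-twist formulas for $\Delta_{A^k}$ and $\Delta_{B^l}$ show that at every remaining vertex at least three of the four neighbours $A^{\pm k}\circ Q$, $B^{\pm l}\circ Q$ have strictly larger $s$ (and $A$- or $B$-periodic points become roots with loops), which rules out cycles and identifies each component as a $4$-valent tree or root-looped $4$-valent tree with Cheeger constant $\tfrac23$ (not $\geq 1$, but positivity is all that is needed). Without this concrete mechanism --- the choice of $s$, the passage to high powers of the two parabolics, and the three-out-of-four lemma --- your outline does not yet contain a proof of non-amenability.
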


Since the critical exponent of elementary groups is at most $\frac{1}{2}$, these are excluded as candidates for the main theorem. So are lattices because they always have critical exponent $1$. Until 2003, lattices were the only known non-elementary Veech groups. Translation surfaces with lattice Veech groups are called \emph{Veech surfaces} and are of special interest because they satisfy the Veech dichotomy (\cite{Vee}). Calta (\cite{Cal}) constructed Veech surfaces of genus $2$ that are primitive, i.e. do not arise via coverings and McMullen (\cite{McMDiscr} and \cite{McMTor}) classified all primitive Veech surfaces of genus $2$ and showed that they -- up to the action of $\GL_2(\RR)$ -- have the shape of an $L$ with suitable side lengths.

While there are still no translation surfaces known with a Veech group of the second kind, McMullen (\cite{McMInfCo}) and independently Hubert and Schmidt (\cite{HSInf}) found ways to construct translation surfaces with infinitely generated Veech groups of the first kind. Since then, estimating the critical exponent of these groups has been an open question. We will concentrate on the construction of Hubert and Schmidt and give an answer to this question in this paper.

Points $P\in X$ with the property that all geodesic rays emanating from a singularity and passing through $P$ eventually end in a singularity are called \emph{connection points}. These play an important role in the construction of Hubert and Schmidt. They construct translation surfaces whose Veech group is commensurable to the stabilizer $\SL(X;P)$ of $P$ and show that if $P$ is non-periodic, i.e. has an infinte orbit under the action of $\SL(X)$, then this group is infinitely generated and of the first kind.

\autoref{mainresult} will follow from
{
\renewcommand{\theproposition}{\ref{mainprop}}
\begin{prop}
For every non-periodic connection point $P$ on a primitive L-shaped Veech surface $X$ the critical exponent of the (infinitely generated) stabilizer subgroup $\SL(X;P):=\Stab_{\SL(X)}(P)$ is strictly between $\frac{1}{2}$ and $1$.
\end{prop}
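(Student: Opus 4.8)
\emph{Strategy.} The plan is to extract both inequalities from the Roblin--Tapie dictionary relating $\delta(\SL(X;P))$ to the geometry of the Schreier coset graph $\mathcal G$ of $\SL(X)/\SL(X;P)$, taken with respect to a fixed finite symmetric generating set of $\SL(X)$ and with its vertex set identified, via orbit--stabiliser, with the $\SL(X)$-orbit of $P$ in $X$. Since $X$ is a Veech surface, $\SL(X)$ is a lattice, so $\delta(\SL(X))=1$; by \cite{RobTap} the exponent $\delta(\SL(X;P))$ is then strictly below $1$ precisely when $\mathcal G$ is non-amenable, i.e.\ when its Cheeger constant $h(\mathcal G)$ is positive. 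Thus the upper bound $\delta(\SL(X;P))<1$ is reduced entirely to the claim $h(\mathcal G)>0$, which is the heart of the matter and is treated in the last paragraph.

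For the lower bound one argues directly inside $\SL(X;P)$, using that $P$ is a connection point. Fix a singularity $\sigma\in S(X)$ and choose two geodesic segments from $\sigma$ to $P$ realising distinct directions $\theta_1\neq\theta_2$ (such segments exist; in fact the directions of the segments from $\sigma$ to $P$ are dense). For each $i$ the geodesic ray from $\sigma$ through $P$ in direction $\theta_i$ must terminate in a singularity, by the definition of a connection point; hence $\theta_i$ carries a saddle connection, which on a Veech surface forces it to be a completely periodic (parabolic) direction (cf.\ \cite{Vee}), and $P$ lies in the interior of a saddle connection of that direction, i.e.\ on the spine of the corresponding cylinder decomposition. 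The affine multitwist $\varphi_{\theta_i}\in\Aff(X)$ associated with this decomposition -- which exists because $X$ is a Veech surface -- fixes the spine pointwise, so $\varphi_{\theta_i}(P)=P$ and $p_i:=D\varphi_{\theta_i}$ is a parabolic element of $\SL(X;P)$; since its fixed point on $\partial\HH$ is determined by $\theta_i$, the elements $p_1$ and $p_2$ do not commute. Therefore $H:=\langle p_1,p_2\rangle\le\SL(X;P)$ is a non-elementary, finitely generated -- hence geometrically finite -- Fuchsian group with a cusp, and such a group satisfies $\delta(H)>\tfrac12$ (the cyclic parabolic subgroup already contributes the value $\tfrac12$, and a ping-pong estimate on suitable powers $\langle p_1^{a},p_2^{b}\rangle\cong\ZZ*\ZZ$ pushes the Poincaré series past $s=\tfrac12$). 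Monotonicity of the critical exponent under inclusion of groups gives $\delta(\SL(X;P))\ge\delta(H)>\tfrac12$; that $\SL(X;P)$ is of the first kind, from \cite{HSInf}, is not even needed for this.

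It remains to prove $h(\mathcal G)>0$; this is the main obstacle. Since deleting edges can only decrease the Cheeger constant, it suffices to produce a spanning subforest $\mathcal F\subseteq\mathcal G$ all of whose components are trees of exponential growth, for then $h(\mathcal G)\ge h(\mathcal F)>0$. The plan is to build $\mathcal F$ from a complexity function $\sr$ on the orbit $\SL(X)\cdot P$ -- a measure of how complicated the connection point $Q$ is, read off from the saddle connections through $Q$ (e.g.\ from the length, in the flat metric, of a shortest saddle connection containing $Q$, together with suitable combinatorial data) -- which is set up so as to be proper, stratifying the orbit into finite shells $\{\sr=N\}$. One then analyses the action of the generators on $\sr$: the goal is that every $Q$ of sufficiently large complexity has a distinguished neighbour of strictly smaller complexity (its ``parent''), while each vertex is the parent of at least two others; then the parent map defines a spanning forest whose components are, beyond a finite part, binary-branching trees, hence of exponential growth. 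The hard part is exactly this last analysis: the contracting directions of the parabolic generators and the elliptic generators (when present) move points within or down a shell, and one must show that genuine ``upward'' branching nonetheless survives as $N\to\infty$. This is where the specific geometry of primitive L-shaped Veech surfaces, their explicit finitely generated Veech groups, and a notion of complexity tailored to connection points all enter; indeed the measure of complexity must be chosen delicately enough that this bookkeeping can actually be carried through.
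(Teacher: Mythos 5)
Your lower bound is essentially the paper's: two non-commuting parabolics fixing $P$ (coming from two periodic directions of saddle connections through the connection point $P$) give a non-elementary subgroup of $\SL(X;P)$ containing a parabolic, and then $\delta>\frac{1}{2}$ follows from Beardon/Patterson (\cite{Bea}, \cite{PatExp}) together with monotonicity of $\delta$ under inclusion; your ping-pong sketch is just an in-line substitute for that citation and is fine. Likewise your reduction of the upper bound to non-amenability of the Schreier graph via \cite{RobTap} is the paper's route (one caveat: only the implication ``non-amenable $\Rightarrow\ \delta<1$'' is available here, not the ``precisely when'' you assert, and the cited theorems are stated for normal subgroups, so one must check, as the paper does, that the spectral comparison survives without normality of $\SL(X;P)$ in $\SL(X)$).

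The genuine gap is that you never prove non-amenability: you only describe what a proof should look like (``a complexity function $\sr$ \ldots\ set up so as to be proper \ldots\ the hard part is exactly this last analysis''). That analysis is the heart of the result. Concretely, the paper takes $s(Q)=|x_i|+|y_i|$, the sum of the absolute values of the irrational parts of the coordinates of an orbit point $Q$, passes (using metric equivalence of Schreier graphs for different finite generating sets) to a generating set containing high powers $A^k,B^l$ of the horizontal and vertical parabolics, deletes the finitely many orbit points periodic under both $A$ and $B$ (finiteness uses the invariance of the common denominator $N(P)$ along the orbit), and keeps only the $A^{\pm k},B^{\pm l}$ edges. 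The technical core is then a set of explicit estimates: for $Q$ periodic under exactly one of $A,B$ both non-loop neighbours have strictly larger $s$, and for $Q$ periodic under neither, at least three of the four neighbours $A^{\pm k}\circ Q$, $B^{\pm l}\circ Q$ have strictly larger $s$, with $k,l$ depending only on $N(P)$ and $w$. This forces every component of the pruned graph to be the $4$-valent tree or the root-looped $4$-valent tree, both of Cheeger constant $\frac{2}{3}$. None of this bookkeeping appears in your proposal, and your fallback criterion is also too weak as stated: a spanning forest of \emph{exponential growth} need not be non-amenable (trees with long hairs have exponential growth and vanishing Cheeger constant); what one needs is uniform branching, i.e.\ minimal degree at least $3$ outside a finite set, which is exactly what the paper's three-out-of-four lemma delivers. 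As it stands, your argument establishes the framework but not the theorem.
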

\addtocounter{defi}{-1}
}

A natural question in this context is the dependence on $P$: every point $P'$ in the $\SL(X)$-orbit of $P$ also is a non-periodic connection point and the groups $\SL(X;P)$ and $\SL(X;P')$ are conjugate and thus have the same critical exponent. To illustrate the dependence on the point, we focus on a particular L-shaped surface $L_8$ and establish some bounds on the number of different orbits.

The connection points are the points of the form $P=(x_r+x_i\sqrt{2};y_r+y_i\sqrt{2})$ with $x_r,x_i,y_r,y_i\in\QQ$. The least common denominator $N(P)$ of the four reduced fractions is an invariant for the orbit of $P$. For every $N\in\mathbb{N}$ we define $\PN$ as the set of connection points $P$ with $N(P)=N$. Since there is no upper bound on the denominators of connection points there are infinitely many different orbits. But for fixed $N$ we can show:

\begin{thm}\label{thm:Orbits}
Set $\om\coloneqq \sqrt{2}$ and fix $N\in\NN$. The set $\PN$ decomposes into a \textbf{finite} number of $\SL(L_8)$-orbits.
\end{thm}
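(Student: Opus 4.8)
The plan is to exhibit an explicit finite-index subgroup $\Gamma \leq \SL(L_8)$ acting on $\PN$ and to reduce the orbit-counting problem to a statement about orbits of $\Gamma$ on a finite set. The surface $L_8$ is an explicit primitive L-shaped Veech surface with $\SL(L_8)$ a (non-arithmetic but explicitly known) lattice in $\SL_2(\RR)$; its generators can be written down as matrices with entries in $\ZZ[\om]$, $\om = \sqrt 2$. Writing $P = (x_r + x_i\om; y_r + y_i\om)$ and collecting the four rational coordinates into a vector $v = (x_r,x_i,y_r,y_i) \in \QQ^4$, the action of an affine map $A \in \SL(L_8)$ on $P$ becomes an affine (in fact linear, after taking the developing map into account and normalizing the base point) action on $\QQ^4$: multiplication by $A$ on the $(x,y)$-plane together with the ring multiplication by $\om$ on each coordinate turns $A$ into an explicit $4\times 4$ matrix $\rho(A) \in \GL_4(\QQ)$, with $\rho\colon \SL(L_8) \to \GL_4(\QQ)$ a group homomorphism. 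The torsion points $\frac{1}{N}\ZZ^4 / \ZZ^4$ (or rather the subset whose least common denominator is exactly $N$) form a finite set, and I claim $\rho(\SL(L_8))$ preserves the lattice $\ZZ^4 \subseteq \QQ^4$, hence descends to an action on $\frac1N\ZZ^4/\ZZ^4$.

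\textbf{Key steps, in order.}
First I would fix a concrete fundamental polygon for $L_8$, choose a base point / developing chart, and compute the generators of $\SL(L_8)$ as $2\times 2$ matrices over $\ZZ[\om]$ (these are available from McMullen's classification, e.g. a parabolic in the horizontal and in the vertical direction). Second, I would verify that the affine action of $\Aff(L_8)$ on a connection point $P$, expressed in coordinates, is given by the $4\times 4$ representation $\rho$ described above, and check that $\rho(\SL(L_8)) \leq \GL_4(\ZZ)$ — this is where the integrality of the generators' entries and of the multiplication-by-$\om$ matrix $\smatr{0}{2}{1}{0}$ is used. Third, observe that $N(P)$, the least common denominator, is invariant under $\GL_4(\ZZ)$, so $\rho$ induces a well-defined action of $\SL(L_8)$ on the finite set $\PN \hookrightarrow \bigl(\tfrac1N\ZZ^4\bigr)/\ZZ^4$. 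Since $\SL(L_8)$ acts on a finite set, it has only finitely many orbits there; restricting to the subset $\PN$ (which is a union of orbits, being defined by the $\GL_4(\ZZ)$-invariant condition $N(P)=N$) gives finitely many orbits of connection points with that exact denominator. Finally I would note that this count is an \emph{upper} bound for the number of orbits of genuine connection points, since not every element of $\tfrac1N\ZZ^4/\ZZ^4$ need be a connection point, but every connection point of denominator $N$ appears; for the theorem as stated (finiteness) this suffices.

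\textbf{Main obstacle.}
The delicate point is step two: making precise the passage from the geometric action of $\Aff(L_8)$ on the surface to the linear action on the coordinate vector. An affine automorphism is only affine in local charts, and a connection point does not a priori have canonical "coordinates" — one must fix a developing map, track how the chosen chart for $P$ is carried to a chart near $AP$, and absorb the resulting translation. Because $P$ is a connection point its coordinates lie in $\QQ(\om)$ by a structure result already available (the paragraph preceding the theorem asserts exactly this description of connection points on $L_8$), so the translation part also lies in $\QQ(\om)^2$ with bounded denominator; the claim is that after normalizing it contributes no new denominators beyond $N$. Verifying this — i.e. that the full affine action, not just its linear part, preserves $\tfrac1N\ZZ^4/\ZZ^4$ — is the computational heart of the argument and will likely require examining the generators one at a time, using that parabolic generators are integral shears and that $\SL(L_8)$ is generated by them together with finitely many elliptic or other explicit elements.
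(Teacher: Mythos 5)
There is a genuine gap, and it sits at the center of your argument: the claimed inclusion $\PN \hookrightarrow \bigl(\tfrac1N\ZZ^4\bigr)/\ZZ^4$ does not exist, because $\PN$ is an \emph{infinite} set while the quotient is finite. For a point $P=\PointL{x_r+x_i\om}{y_r+y_i\om}\in\PN$ only the combinations $x_r+x_i\om$ and $y_r+y_i\om$ are confined to the bounded ranges $[0,1+\om)$ and $[0,\om)$; since $\tfrac1N\ZZ+\tfrac1N\ZZ\om$ is dense in $\RR$, the individual numerators of $x_r,x_i,y_r,y_i$ can be arbitrarily large while the denominators stay equal to $N$. (This is exactly why the paper introduces the unbounded complexity $s(P)=|x_i|+|y_i|$ on each $\PN$.) Consequently the reduction map $\PN\to\{[a,b,c,d]\in(\quotient{\ZZ}{N\ZZ})^4\}$ sending numerators to their classes mod $N$ is very far from injective, and finiteness of orbits on the finite quotient tells you nothing about finiteness of orbits on $\PN$: the preimage of a single class could a priori split into infinitely many $\SL(L_8)$-orbits. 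What your construction does yield is only that the number of orbits is bounded \emph{below} by combinatorial data of the induced action on the finite quotient — and indeed the paper uses precisely this reduction (the graph $G_N$ on the vertex set $\{[a,b,c,d]\in(\quotient{\ZZ}{N\ZZ})^4\mid \gcd(a,b,c,d,N)=1\}$, with edges given by $A$ and $B$) to obtain a lower bound $C(N)$ on the number of orbits, not the finiteness statement.

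The missing ingredient is an upper-bound mechanism, and it cannot be obtained by sharpening the equivariance check in your ``step two.'' The paper's proof instead runs a ``continued fraction''-like descent (Algorithm 1): using that $\SL(L_8)=\langle A,B\rangle$ with $A=\smatr{1}{0}{\sqrt 2}{1}$, $B=\smatr{1}{\sqrt 2}{0}{1}$, it shows that for any non-periodic $P\in\PN$ outside the explicit finite set $S=\{Q\mid |x_i|,|y_i|\leq \Grenze\}$ one can apply a suitable power of $A$ or $B$ (or the words $A\inv B\inv A$, $B\inv A\inv B$ when $P$ is periodic under $B$ resp.\ $A$) so that $m=\max(|x_i|,|y_i|)$ strictly decreases after at most two steps. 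Hence every orbit in $\PN$ meets $S$, and $|S|$ (finite because the denominators are fixed and bounded irrational parts force bounded rational parts) bounds the number of orbits. If you want to salvage your plan, you would need to supply an analogous argument showing that each fiber of your reduction map meets a fixed finite set under the group action — which is essentially the paper's descent and is where all the work lies.
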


This is shown in \autoref{sect:orbits}. \autoref{sect:concl} contains the proof of \autoref{mainresult}. For the proof we need a way to measure the complexity of a connection point $P$. This is done by the value $s(P)\coloneqq |x_i|+|y_i|$ introduced and analyzed in \autoref{sect: technLemm}. The section consists of some technical lemmata describing the effect of particular Veech group elements on $s(P)$. \autoref{critexp} describes some background about the critical exponent and the connection between the critical exponent and the Laplacian on hyperbolic manifolds. In this context, the amenability of Schreier graph appears. \autoref{sect:Schreier} explains this term and provides tools to prove non-amenability of (Schreier) graphs. The paper starts with some background information on translation surfaces and Veech groups in \autoref{sect:TSVG}.

The author thanks the European Research Council ERC-StG 257137 for financial support as well as J.\ Cuno, P.\ Hubert , M.\ Möller and S.\ Tapie for useful discussions.


\section{Background on Translation Surfaces and Veech Groups}\label{sect:TSVG}
\subsection{Basic Definitions and Observations}
\emph{Translation surfaces} $X$ can be viewed as Riemann surfaces together with an atlas such that all transition maps are translations away from a discrete subset -- the \emph{singularities}. This is equivalent to a choice of a holomorphic one-form $\eta$ on $X$. The third way to define a translation surface -- which we will mostly use -- is as a finite collection of euclidean polygons glued along parallel sides by translations. For details of the equivalence of these definitions cf.\@ e.g.\@ the survey \cite{Mas}.

We observe and define the following: the total angle around each vertex $v$ of a polygon is $2\pi m_v$ with $m_v$ a positive integer. The vertex $v$ is called a \emph{singularity} and the number $m_v-1$ its \emph{order}. The set of all singularities of the translation surface $X$ is denoted by $S(X)$. A geodesic emanating from a singularity is called a \emph{separatrix}. A geodesic, without singularities in its interior, which connects two singularities is called a \emph{saddle connection}. In this context we can define a special type of points on translation surfaces that are important in the construction of translation surfaces with infinitely generated Veech groups developed by Hubert and Schmidt (\cite{HSInf}):

\begin{defi}
A nonsingular point $P$ is a \emph{connection point} of a translation surface $X$ if every separatrix without singularities in its interior that passes through $P$ can be extended to a saddle connection.
\end{defi}

By the Gau\ss -Bonnet theorem the genus of the translation surface $X$ can be calculated from the equation $\sum_{v\in S(X)}(m_v-1)=2g-2$. The space $\Omega M_g$ of all pairs $(X,\eta)$ with $X$ a Riemann surface of genus $g$ and $\eta$ a holomorphic one-form can be stratified by the orders of the singularities. For $g=2$ there are two strata: $\Omega M_2(1,1)$ -- the translation surfaces with two singularities of order $1$ -- and  $\Omega M_2(2)$ -- the translation surfaces with a single singularity of order $2$. The stratification is invariant under the action of $\GL_2(\RR)$ on the space of translation surfaces that is induced by the the action on the plane and thus on the polygons by linear maps.

Let $X$ be a connected translation surface of finite volume. The \emph{translation group} $\Trans(X)$ consists of all orientation-preserving self-homeomorphisms $f: X \rightarrow X$, which are locally translations and map $S(X)$ to itself. Accordingly, the \emph{affine group} $\Aff(X)$ consists of all orientation-preserving self-homeomorphisms $f: X \rightarrow X$, which are locally affine and map $S(X)$ to itself. Note that, since $X$ is connected and of finite volume, the linear part -- i.e. the derivative -- of $f\in\Aff(X)$ is a globally constant $2\times 2$-matrix $Df=A$ of determinant $1$. The \emph{Veech group} $\SL(X)$ is the image of $\Aff(X)$ in $\SL_2(\RR)$ under the derivation map. Another way to see the Veech group is as the stabilizer of the translation surface under the action of $\GL_2(\RR)$ described above.

These three groups form a short exact sequence: $\Trans(X) \hookrightarrow \Aff(X) \twoheadrightarrow \SL(X)$. By Proposition 4.4 of \cite{HL} in genus $2$ the translation group is trivial, so the affine group and the Veech group are isomorphic and we identify elements of the Veech group with their preimage in the affine group. For $\gamma\in\SL(X)$ and $P\in X$ we write $\gamma\circ P$ for the image of $P$ under $\gamma$.

The Veech group of a translation surface is a discrete subgroup of $\SL_2(\RR)$ (cf.\@ e.g.\@ Section 1.3 of \cite{HSIntro}). Hence it is a Fuchsian group, i.e. it acts properly discontinuously on the (hyperbolic) upper half plane $\mathbb{H}$ by Moebius transformations. Fuchsian groups are a well-studied subject (cf.\@ e.g.\@ \cite{BeaBook} and \cite{Kat}) and we will now recall some important notions.

Fix a point of $\mathbb{H}$ -- we choose the point $i$ -- as a base point and let $\Gamma$ be a Fuchsian group. Note that all accumulation points of the orbit $\Gamma i$ have to be at the boundary $\partial\HH=\RR\cup \{\infty\}$, because otherwise the action would not be properly discontinuous. The set $\Lambda$ of all accumulation points $r\in \partial \HH$ of the orbit $\Gamma i$ is called the \emph{limit set}. If $\Lambda$ is finite, $\Gamma$ is called \emph{elementary}, otherwise \emph{non-elementary}. Non-elementary groups whose limit set is the whole boundary are called Fuchsian groups \emph{of the first kind}; the other non-elementary groups are \emph{of the second kind}. If $\Gamma$ has a convex fundamental domain with finitely many sides, it is called \emph{geometrically finite}, if it has a fundamental domain, which has finite hyperbolic area, it is called \emph{of finite covolume} or \emph{lattice}.

Two Fuchsian groups $\Gamma$ and $\Gamma'$ are said to be \emph{commensurate}, if they have a commom subgroup of finite index in both $\Gamma$ and $\Gamma'$. They are called \emph{commensurable}, if they have subgroups of finite index, which are conjugate by an element of $\SL_2(\RR)$.

The following lemma is a consequence of Theorem 4.5.1 and Theorem 4.6.1 of \cite{Kat} and plays a crucial role in the construction of infinitely generated Veech groups by McMullen as well as in the construction by Hubert and Schmidt:
\begin{lem}[\cite{HSInf}, Lemma 3] A Fuchsian group of the first kind either is a lattice or it is infinitely generated.\label{lem:LattOrInfGen}
\end{lem}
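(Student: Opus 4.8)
The only substantive content is the implication: if a Fuchsian group $\Gamma$ of the first kind is finitely generated, then it is a lattice. (If $\Gamma$ is not finitely generated it is, by definition, infinitely generated, and there is nothing to do.) So the plan is to assume that $\Gamma$ is of the first kind and finitely generated, and to produce a fundamental domain of finite hyperbolic area.

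The first step is to translate finite generation into a geometric statement. Here I would invoke \cite[Theorem 4.5.1]{Kat}: a finitely generated Fuchsian group is geometrically finite in the sense used above, i.e.\ a Dirichlet region $D_p(\Gamma)$ (for a suitable base point $p$, not an elliptic fixed point) is a convex hyperbolic polygon bounded by finitely many sides. I would treat this as a black box; it is by far the deepest ingredient.

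The second step is to observe that such a polygon has finite area \emph{unless} one of its sides is a \emph{free side}, that is, an arc lying on the boundary circle $\partial\HH$. Indeed, a convex hyperbolic polygon whose finitely many sides are all (possibly ideal) geodesic segments is a topological disk whose $n$ vertices lie in $\HH\cup\partial\HH$, so Gauss--Bonnet gives area $\le (n-2)\pi$, where $n$ is the number of sides; ideal vertices do no harm, since they contribute interior angle $0$ and ideal polygons still have finite area. It therefore remains to rule out free sides, and this is where the limit set enters: by \cite[Theorem 4.6.1]{Kat} the free sides of $D_p(\Gamma)$ correspond precisely to the connected components of the domain of discontinuity $\partial\HH\setminus\Lambda$. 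Since $\Gamma$ is of the first kind, $\Lambda=\partial\HH$, hence $\partial\HH\setminus\Lambda=\varnothing$ and $D_p(\Gamma)$ has no free side.

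Putting the two steps together, $D_p(\Gamma)$ is a fundamental domain for $\Gamma$ which is a finite-sided convex polygon without free sides, so it has finite hyperbolic area; thus $\Gamma$ is a lattice, as claimed. The main obstacle is entirely concentrated in the two cited theorems of Katok — the passage from finite generation to a finite-sided Dirichlet region, and the dictionary between free sides and gaps in the limit set. Once those are granted, all that remains is the elementary Gauss--Bonnet bookkeeping above, together with the trivial observation that a group which is not finitely generated is infinitely generated.
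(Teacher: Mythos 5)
Your proposal is correct and follows essentially the same route the paper intends: the paper gives no proof of its own, stating only that the lemma is a consequence of Theorems 4.5.1 and 4.6.1 of \cite{Kat} (quoting it from \cite{HSInf}), and these are exactly the two black boxes you invoke. The remaining content of your write-up -- no free sides because the limit set is all of $\partial\HH$, and the Gauss--Bonnet bound showing a finite-sided convex geodesic polygon (ideal vertices allowed) has finite area -- is the standard and correct way to assemble them.
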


Another important result comparing different Veech groups is the following by Gutkin and Judge:
\begin{prop}[\cite{GutJud}, Theorem 4.9]\label{prop:commens} Let $p : Y \rightarrow X$ be an affine covering of translation surfaces. Then the groups $\SL(Y)$ and $\SL(X)$ are commensurable. If $p$ is a translation covering, they are commensurate.
\end{prop}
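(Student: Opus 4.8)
The plan is to reduce everything to a normal translation covering and there write down a common finite-index subgroup of the two Veech groups. Since $X$ and $Y$ have finite volume, $p$ has finite degree $d$. Let $\Sigma_X\subseteq X$ be the (finite) set of marked points of $X$, which by convention contains the singularities and, in the situation at hand, also the $p$-images of the singularities of $Y$ and the branch points of $p$; then $p$ restricts to an honest $d$-sheeted covering $p\colon Y^\ast\to X^\ast$ with $X^\ast=X\setminus\Sigma_X$ and $Y^\ast=Y\setminus p\inv(\Sigma_X)$, and each element of $\Aff(X)$ (resp.\ $\Aff(Y)$) restricts to a self-homeomorphism of $X^\ast$ (resp.\ $Y^\ast$). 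This covering is classified by a subgroup $H=p_\ast\pi_1(Y^\ast)$ of index $d$ in the \emph{finitely generated} group $\pi_1(X^\ast)$. Applying the linear part $B\in\GL_2(\RR)$ of $p$ to $Y$ replaces $Y$ by a translation surface $BY$ on which $p$ becomes a \emph{translation} covering and replaces $\SL(Y)$ by $B\,\SL(Y)\,B\inv$; so it is enough to prove commensurateness for translation coverings, which then yields commensurability in general (and commensurateness when $p$ itself was a translation covering). Finally, replacing $Y^\ast$ by the cover of $X^\ast$ attached to the normal core of $H$ (a normal subgroup of finite index) and filling in the punctures to obtain a translation surface $Z$, one gets finite \emph{normal} translation coverings $Z\to X$ and $Z\to Y$; since commensurateness of subgroups of $\SL_2(\RR)$ is transitive, it suffices to treat a finite normal translation covering $\pi\colon Z\to X$ with finite deck group $G$, which acts freely by translations on $Z^\ast=Z\setminus\pi\inv(\Sigma_X)$.

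For such a $\pi$ I would compare the subgroup $\Aff_0(X)\le\Aff(X)$ of affine automorphisms of $X$ that lift to $\Aff(Z)$ with the subgroup $\Aff^0(Z)\le\Aff(Z)$ of affine automorphisms of $Z$ that descend to $\Aff(X)$. By the lifting criterion for coverings, $f\in\Aff(X)$ lifts if and only if $f_\ast N=N$, where $N=\pi_1(Z^\ast)\trianglelefteq\pi_1(X^\ast)$ and $f_\ast$ is the automorphism of $\pi_1(X^\ast)$ induced by $f$ (only defined up to an inner automorphism, which here is immaterial since $N$ is normal). As $\pi_1(X^\ast)$ is finitely generated it has only finitely many subgroups of index $|G|$, and $\Aff(X)$ permutes them; hence $\Aff_0(X)=\Stab_{\Aff(X)}(N)$ has finite index in $\Aff(X)$, so $D(\Aff_0(X))$ has finite index in $\SL(X)$. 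Restriction along $\pi$ identifies $\Aff^0(Z)/G$ with $\Aff_0(X)$ (a lift of a given $f$ is unique up to composition with $G$, and lifts compose to lifts), and because $\pi$ is a translation covering every lift $g$ of $f$ satisfies $Dg=Df$ (chain rule, $D\pi=\EE$), while $D(G)=\{\EE\}$. Consequently $D(\Aff^0(Z))=D(\Aff_0(X))$; call this common subgroup $\Gamma'$ of $\SL_2(\RR)$.

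It remains to show that $\Gamma'$ also has finite index in $\SL(Z)$, equivalently that $\Aff^0(Z)$ has finite index in $\Aff(Z)$; I expect this to be the main obstacle. One verifies that $g\in\Aff(Z)$ descends to $X$ exactly when $gGg\inv=G$, so $\Aff^0(Z)=N_{\Aff(Z)}(G)$, and each conjugate $gGg\inv$ is a finite group acting freely on $Z^\ast$ whose quotient is affinely isomorphic to $X^\ast$; via covering-space theory these conjugates are governed by the finite set of index-$|G|$ subgroups of the finitely generated group $\pi_1(X^\ast)$, so $G$ has only finitely many conjugates in $\Aff(Z)$ and $N_{\Aff(Z)}(G)$ has finite index. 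The delicate point is the precise base-point bookkeeping behind this correspondence. Granting it, $\Gamma'$ is a common finite-index subgroup of $\SL(X)$ and $\SL(Z)$; applying the same argument to $Z\to Y$ and invoking transitivity shows that $\SL(X)$ and $\SL(Y)$ are commensurate, and undoing the reduction of the first paragraph upgrades this to commensurability for a general affine covering.
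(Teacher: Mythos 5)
Your overall strategy -- puncture at the marked points, reduce to a finite normal translation covering via the normal core, lift affine maps in one direction and descend them in the other, and use that the finitely generated group $\pi_1(X^\ast)$ has only finitely many subgroups of a given index -- is sound, and it is essentially the Gutkin--Judge argument, which this paper does not reproduce but only cites. There are, however, two genuine gaps. The first is at the very start: the claim that every element of $\Aff(X)$ and, above all, of $\Aff(Y)$ restricts to a self-homeomorphism of $X^\ast$ resp.\ $Y^\ast=Y-p\inv(\Sigma_X)$. Compatibility of the covering with the markings is not a bookkeeping convention you may impose after the fact but an essential hypothesis: declaring the branch values to be marked on $X$ changes $\Aff(X)$ and hence the group $\SL(X)$ you are comparing with, and your listed conventions do not make $p\inv(\Sigma_X)$ invariant under $\Aff(Y)$. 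Concretely, let $Y=L_{D,\epsilon}$, let $P$ be a non-periodic connection point, let $X=(Y;P)$ be the marking of $Y$ at $P$ and $p=\id$. All of your conventions hold ($\Sigma_X=\{\sigma,P\}$ contains the singularity, the image of $S(Y)$, and the -- empty -- branch locus), yet $\SL(Y)$ is a lattice while $\SL(X)=\SL(Y;P)$ is infinitely generated and of infinite index, so the two groups are not commensurable; this failure is exactly what the present paper exploits. The statement is only true, and your first paragraph only works, under the hypothesis implicit in \cite{GutJud} (a ``balanced'' covering of marked surfaces): branching only over $S(X)$ and $S(Y)=p\inv(S(X))$, marked regular points included. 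With that hypothesis stated, the invariance of $Y^\ast$ under $\Aff(Y)$ holds by definition instead of being the unproved step on which everything else rests.

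The second gap is the one you flag yourself: the finiteness of the set of $\Aff(Z)$-conjugates of the deck group $G$. Counting index-$|G|$ subgroups of $\pi_1(X^\ast)$ only controls the coverings $Z^\ast\rightarrow X^\ast$ up to isomorphism, and isomorphic coverings can have distinct deck groups inside $\Aff(Z)$ (they differ by conjugation with a homeomorphism over $X^\ast$), so this count does not by itself bound the number of subgroups $gGg\inv$, and the ``base-point bookkeeping'' you defer does not obviously close this. The patch is elementary and bypasses $\pi_1$ entirely: every element of $gGg\inv$ has derivative $\EE$, hence lies in $\Trans(Z)$ and preserves the (nonempty, outside the unmarked-torus case) marked set of $Z$; since a locally translational automorphism of a connected surface is determined by the image of one marked point, $\Trans(Z)$ is finite, so $G$ has only finitely many conjugates in $\Aff(Z)$ and orbit--stabilizer gives $[\Aff(Z):N_{\Aff(Z)}(G)]<\infty$, which is what you need. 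Two smaller points: the linear part $B$ of $p$ need not have determinant $1$, so rescale it before conjugating if you want commensurability in the paper's sense (conjugation by an element of $\SL_2(\RR)$); and the unmarked torus, where $\Trans$ is infinite, must be excluded or handled separately. With these repairs your argument goes through.
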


\subsection{Veech Groups of Different ``Sizes''}\label{Sizes}
Since Veech groups are discrete subgroups of $\SL_2(\RR)$ and thus Fuchsian groups, it is a natural question, which types or sizes of Fuchsian groups appear as Veech groups. By Theorem $1.1$ of \cite{MMAffGr} the Veech group of a generic translation surface $X$ of genus $g\geq 2$ is very small, namely isomorphic to $\quotient{\ZZ}{2\ZZ}$ or trivial, depending on whether $X$ belongs to a hyperelliptic component of its stratum or not.

The next larger groups are cyclic groups. Whereas in every stratum there exists a translation surface, whose Veech group is cyclically generated by a parabolic element (\cite{MMAffGr}, Proposition 1.4), it is still an open question, if there exists a translation surface with Veech group cyclically generated by a hyperbolic element.

There are no translation surfaces known which have a Veech group that is non-elementary and of the second kind.

\subsubsection{Lattices}\label{subs:Lattices}
The best-studied translation surfaces are the ones having a lattice Veech group. They satisfy the Veech dichotomy (\cite{Vee}) and therefore are called \emph{Veech surfaces}. In particular, the Veech surfaces of genus $2$ are well-known. In \cite{Cal} Calta constructs Veech surfaces of genus $2$ that are \emph{primitive}, i.e. not arising via a covering construction. McMullen classified all primitive Veech surfaces of genus $2$ by specifying prototypes such that each primitive Veech surface is in the $\GL_2(\RR)$-orbit of one of these prototypes.

\begin{defi}\label{def:LDorLDeps}
Let $D\geq 5$ be an integer $\equiv 0$ or $1\mod 4$ and not a square. 
\begin{itemize}
\item[a)] If $D\equiv 0 \mod 4$ we set $w\coloneqq\sqrt{\frac{D}{4}}$ and define $L_D$ to be the translation surface obtained from the $L$-shaped polygon with side lengths and identifications as shown in the upper left picture of \autoref{pic:LDLDmLDp}.
\item[b)] If $D\equiv 1 \mod 4$ we set $w\coloneqq\frac{1+\sqrt{D}}{2}$ and define $L_{D,-1}$ to be the translation surface obtained from the $L$-shaped polygon with side lengths and identifications as shown in the upper right picture of \autoref{pic:LDLDmLDp}.
\item[c)] Additionaly, if $D\equiv 1 \mod 8$ we set $w\coloneqq\frac{1+\sqrt{D}}{2}$ and define $L_{D,+1}$ to be the translation surface obtained from the $L$-shaped polygon with side lengths and identifications as shown in the lower picture of \autoref{pic:LDLDmLDp}.
\end{itemize}
\end{defi}

\begin{figure}[ht]
\begin{center}
\includegraphics[width=0.92\linewidth]{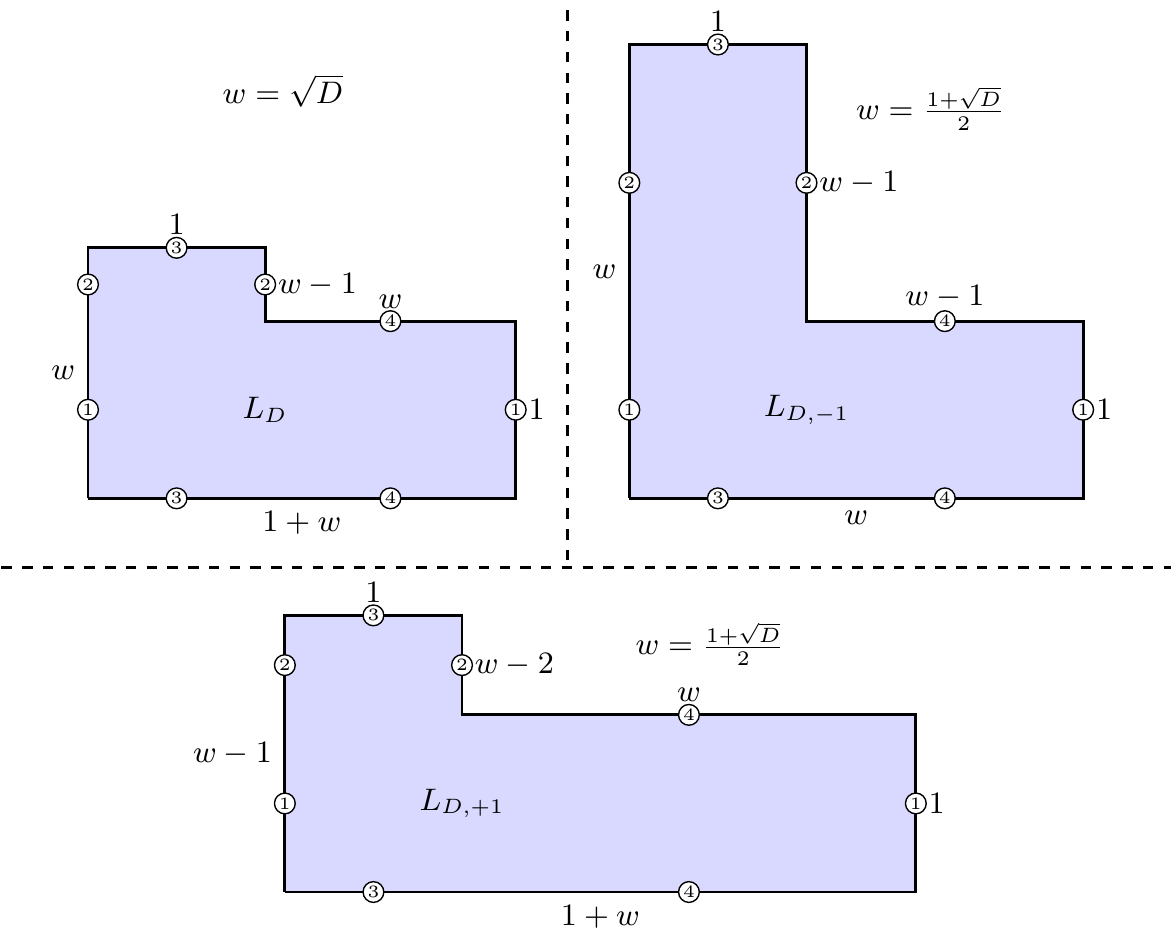}
\caption{The surfaces $L_D$, $L_{D,-1}$, and $L_{D,+1}$.}
\label{pic:LDLDmLDp}
\end{center}
\end{figure}

These surfaces are the prototypes of McMullen's classification of primitive Veech surfaces in $\Omega M_2(2)$:
\begin{prop}[\cite{McMDiscr}, Corollary 1.3]\label{prop:ClassificationVS}
The translation surfaces $L_D$, $L_{D,-1}$, and $L_{D,+1}$ are Veech surfaces of different $\GL_2(\RR)$-orbits. Every primitive Veech surface $X$ of $\Omega M_2(2)$ is in the orbit of one of these for a suitable $D$.
\end{prop}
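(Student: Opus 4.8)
The plan is to prove the two assertions separately, using the theory of real multiplication on genus-two Jacobians and of eigenforms developed by Calta and McMullen. To see that $L_D$, $L_{D,-1}$, $L_{D,+1}$ are Veech surfaces, one reads the relative periods of the tautological one-form $\omega$ off the side lengths of the $L$-shaped polygon and checks that multiplication by $w$ preserves the resulting lattice in $H_1(X;\ZZ)$; this exhibits $(X,\omega)$ as an eigenform for proper real multiplication by the order $\mathcal{O}_D=\ZZ[w]$ of discriminant $D$. Since the polygon has a single cone point, the surface lies in $\Omega M_2(2)$, so McMullen's theorem that every genus-two eigenform with a double zero generates a Teichmüller curve shows that $\SL(X)$ is a lattice.

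For the claim that the three families lie in distinct $\GL_2(\RR)$-orbits, note first that the discriminant $D$ of the order by which $\mathrm{Jac}(X)$ admits real multiplication with $\omega$ an eigenform is a $\GL_2(\RR)$-invariant: a linear map only rescales the eigenform and the polarization, and therefore cannot change the order. This separates prototypes with different $D$. For a fixed $D\equiv 1\bmod 8$ one still has to distinguish $L_{D,-1}$ from $L_{D,+1}$; here one attaches to $(X,\omega)$ the spin (Arf) invariant $\epsilon\in\ZZ/2$ coming from a $\ZZ/2$-valued quadratic form on the Weierstrass points of the hyperelliptic curve $X$ relative to the real-multiplication structure, and computes from the explicit prototype data that $\epsilon$ takes the value $0$ on one family and $1$ on the other.

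The substantial direction is that every primitive Veech surface $X\in\Omega M_2(2)$ is $\GL_2(\RR)$-equivalent to one of the prototypes. Since $X$ is a Veech surface, its trace field $\QQ(\tr\SL(X))$ is totally real of degree at most the genus, hence at most $2$; degree $1$ would make $X$ square-tiled and thus a torus cover, contradicting primitivity, so the trace field is a real quadratic field $\QQ(\sqrt{D_0})$. By the results of Möller and McMullen, $\mathrm{Jac}(X)$ then carries proper real multiplication by an order $\mathcal{O}_D$ in this field with $\omega$ an eigenform. Invoking complete periodicity of genus-two eigenforms (Calta), $X$ has a completely periodic direction; after applying a suitable element of $\GL_2(\RR)$ one makes it horizontal and normalizes. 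Because $\omega$ has a single double zero, the horizontal cylinder decomposition consists of exactly one or two cylinders, and a short combinatorial argument — using a parabolic element of $\SL(X)$ fixing the horizontal direction to reduce the moduli — puts $X$ in the form of an $L$-shaped polygon depending on finitely many real side-length parameters. Finally, imposing the eigenform equation (that $w$ acts on periods as an algebraic integer of discriminant $D$) pins these parameters down to exactly those of $L_D$ when $D\equiv 0\bmod 4$, and of $L_{D,-1}$ or $L_{D,+1}$ when $D\equiv 1\bmod 4$, the latter choice being forced by the spin invariant of $X$.

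The main obstacle is precisely this last normalization-plus-arithmetic step, together with the component count behind the case distinction: it amounts to the full description of the Weierstrass curve $W_D$ inside the Hilbert modular surface and the identification of its $L$-shaped prototypes, which is the technical heart of \cite{McMDiscr}. By contrast, once the correct quadratic form has been identified, the spin computation separating the two components for $D\equiv 1\bmod 8$ is delicate bookkeeping rather than a conceptual difficulty.
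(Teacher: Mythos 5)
This proposition enters the paper as a quotation of Corollary 1.3 of \cite{McMDiscr}; the paper offers no proof of its own, so the only meaningful comparison is with McMullen's argument, and your outline does follow that route: realize the prototypes as eigenforms for real multiplication by the order of discriminant $D$, so that McMullen's theorem on genus-two eigenforms with a double zero makes them Veech surfaces; separate orbits by $D$ and, for $D\equiv 1\bmod 8$, by the spin invariant; and classify a general primitive Veech surface in $\Omega M_2(2)$ via its quadratic trace field, real multiplication, a completely periodic direction and the $L$-shaped prototype normalization. Two spots in your sketch are glibber than the actual argument. First, the $\GL_2(\RR)$-invariance of $D$ is not merely that ``a linear map rescales the eigenform'': the deformation changes the complex structure and hence the Jacobian, and what is really used is that the order's action on $H^1(X;\QQ)$ is transported by the affine homeomorphism, i.e.\ the invariance of the eigenform locus — a theorem of McMullen rather than a one-line observation. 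Second, the passage to an $L$-shaped polygon requires producing a periodic direction with exactly two cylinders and then the full prototype bookkeeping, including the component count that the spin invariant resolves; you correctly identify this as the technical heart and defer it to \cite{McMDiscr}. So your proposal is an accurate roadmap of the cited proof rather than a self-contained argument — which is precisely the status the statement has in this paper as well.
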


The number $D$ in the above theorem is the discriminant of the \emph{trace field} of $X$, the number field $\QQ$ adjoint all traces of elements of the Veech group $\SL(X)$.

McMullen completed the classification of primitive Veech surfaces of genus $2$ with the following theorem:
\begin{prop}[\cite{McMTor}, Theorem 1.1]\label{prop:ClassificationVS11}
All primitive Veech surfaces in $\Omega M_2(1,1)$ are in the $\GL_2(\RR)$-orbit of the surface obtained from a regular decagon by gluing opposite sides.
\end{prop}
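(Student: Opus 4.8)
The statement is McMullen's classification from \cite{McMTor}, and the plan is to follow his argument: translate the flat-geometric hypothesis into a statement about abelian surfaces with real multiplication, extract from the Teichm\"uller-curve condition a single arithmetic constraint on roots of unity, and solve that constraint. First I would invoke McMullen's structure theory of primitive Teichm\"uller curves in genus two: if $(X,\omega)$ is primitive and generates a Teichm\"uller curve in $\Omega M_2$, then $\mathrm{Jac}(X)$ carries real multiplication by a quadratic order $\mathcal O_D$ (with $D$ the trace-field discriminant of \autoref{prop:ClassificationVS}) and $\omega$ is an eigenform. The eigenform locus $\Omega E_D\subset\Omega M_2$ is $\SL_2(\RR)$-invariant, and while $\Omega E_D\cap\Omega M_2(2)$ is precisely the Teichm\"uller-curve locus $W_D$ appearing in \autoref{prop:ClassificationVS}, the intersection $\Omega E_D\cap\Omega M_2(1,1)$ is one complex dimension larger; so in the $(1,1)$-stratum, generating a Teichm\"uller curve is one nontrivial condition beyond carrying real multiplication.

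Second I would pin down that condition. For an eigenform $(X,\omega)\in\Omega E_D\cap\Omega M_2(1,1)$ with simple zeros $P$ and $Q$, McMullen shows that the pair generates a Teichm\"uller curve if and only if the divisor class $[P]-[Q]\in\mathrm{Jac}(X)$ is torsion, say of order $n$. This equivalence --- matching the lattice property of the Veech group with a torsion condition --- is the technical heart of \cite{McMTor}; it uses the $\mathcal O_D$-module structure on the torsion of the relative Jacobian over the eigenform locus, together with the algebraicity of $\Omega E_D$.

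Third comes the computation that turns this into arithmetic. Work in period coordinates adapted to the real multiplication: on $\mathrm{Jac}(X)=\CC^2/\Lambda$ let $\omega$ be the first coordinate and its Galois conjugate $\omega^\sigma$ the second, with $\Lambda$ an $\mathcal O_D$-module, so that periods of integral cycles have the form $(v,v^\sigma)$ for $v\in\QQ(\sqrt{D})$. A saddle connection from $P$ to $Q$ contributes periods of $\omega$ and of $\omega^\sigma$, and the order-$n$ torsion condition places these in $\tfrac1n\Lambda$; expressed over $\QQ(\sqrt{D})$ and combined with the eigenform normalization, the relevant quantities become visible in terms of $n$-th roots of unity, and the real-multiplication (eigenform) condition collapses to the requirement that a ratio $\sin(\pi a/n)/\sin(\pi b/n)$ of sines of rational multiples of $\pi$ lie in $\QQ(\sqrt{D})$ and be, up to a unit of $\mathcal O_D$, an algebraic integer whose norm is bounded in terms of $n$. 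Finally, one classifies the triples $(a,b,n)$ for which such a ratio of sines can satisfy this: by classical finiteness results about rational multiples of $\pi$ with distinguished sine values --- in the form needed for ratios lying in a fixed quadratic order --- only a short list survives, it forces $D=5$ with $a/n,b/n$ the angles of a regular decagon, and conversely the decagon surface lies in $\Omega M_2(1,1)$ with lattice Veech group, which closes the classification.

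The step I expect to be the main obstacle is the one in the third paragraph: carrying out the bookkeeping of periods together with the $\mathcal O_D$-action on torsion carefully enough that the torsion-plus-eigenform hypotheses really do collapse to the clean ``ratio of sines'' inequality, and then the genuinely arithmetic elimination of every case but the decagon. By contrast, the reduction to real multiplication in the first paragraph is largely a repackaging of the Hilbert-modular-surface picture already behind \autoref{prop:ClassificationVS}.
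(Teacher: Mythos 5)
You should first note that the paper itself offers no proof of this statement: it is imported verbatim from McMullen (\cite{McMTor}, Theorem 1.1), just as \autoref{prop:ClassificationVS} is imported from \cite{McMDiscr}, so there is no internal argument to compare against. Measured against McMullen's actual proof, your outline does follow the right strategy — reduce to eigenforms for real multiplication by $\mathcal{O}_D$ via the Hilbert-modular-surface picture, use the torsion property of the difference of the two zeros, and finish with an arithmetic analysis of ratios of sines of rational angles (Conway--Jones-type classification) that eliminates everything except $D=5$ and the regular decagon.

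Two caveats keep this from being a proof rather than a roadmap. First, the "if and only if" you assert in your second step overstates what is available and what is needed: McMullen's torsion theorem gives the forward implication (lattice Veech group $\Rightarrow$ $[P]-[Q]$ torsion), and that is the direction the classification uses; the converse is not needed globally, only the direct verification that the decagon surface itself is a primitive Veech surface in $\Omega M_2(1,1)$, which you do invoke separately. Second, the two genuinely hard ingredients — the torsion theorem itself and the reduction of the torsion-plus-eigenform condition in a periodic direction to a constraint on $\sin(\pi a/n)/\sin(\pi b/n)$ lying in $\QQ(\sqrt{D})$, followed by the trigonometric-diophantine elimination — are named and correctly located as the crux, but not carried out; as you acknowledge, all of the content of \cite{McMTor} sits exactly there. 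Since the proposition functions in this paper purely as a quoted classification result, citing McMullen is the appropriate "proof," and your sketch should be read as a faithful summary of his argument rather than a self-contained derivation.
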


The surfaces $L_D$, $L_{D,-1}$, and $L_{D,+1}$ are an important ingredient of \autoref{mainprop}, which implies our main result. Since we will prove many statements holding for all these surfaces we introduce the notion $L_{D,\epsilon}$ to summarize the surfaces of the three types.

We need to understand which points are \emph{periodic points} -- i.e. have a finite orbit under the action of the Veech group -- and which points are connection points.

\begin{prop}[\cite{MMPerPoi}, Theorem 5.1] \label{thm:periodic} The only periodic points on a primitive Veech surface in $\Omega M_2(2)$ are the $6$ fixed points of the hyperelliptic involution.
\end{prop}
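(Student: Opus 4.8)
The plan is to identify the periodic points directly using the structure of the Veech surface $L_{D,\epsilon}$ and the hyperelliptic involution. First I would recall that every translation surface in $\Omega M_2(2)$ is hyperelliptic: the hyperelliptic involution $\sigma$ is an affine automorphism with derivative $-\mathrm{id}$, so $\sigma$ commutes with every element of $\Aff(X)$. Consequently $\sigma$ normalizes the action of $\SL(X)$ on $X$, and the six Weierstrass points (the fixed points of $\sigma$) — one of which is the singularity, leaving five nonsingular fixed points — are permuted among themselves by the Veech group. Since $\SL(X)$ can only permute a finite set, each Weierstrass point has finite orbit and is therefore periodic. This gives the easy inclusion: the six fixed points of $\sigma$ are periodic.

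The harder direction is to show there are no others, and this is where I expect the main obstacle to lie. The strategy I would use is a rigidity/descent argument exploiting that $X$ is \emph{primitive}: if $P$ is a periodic point with stabilizer $\Stab_{\SL(X)}(P)$ of finite index in $\SL(X)$, one can form the translation surface (in the sense of Hubert–Schmidt, or via a branched cover) whose Veech group is commensurable to this stabilizer, and hence to $\SL(X)$ itself. The point is to argue that marking a periodic point and passing to an appropriate cover cannot produce anything genuinely new unless $P$ was already a very special point — concretely, unless $P$ is fixed by a finite-order element of the Veech group. Here one uses that the only finite-order elements of the Veech group of a primitive genus-$2$ Veech surface act (after the isomorphism $\Aff(X)\cong\SL(X)$) through the hyperelliptic involution or elliptic elements, and a torsion element of $\SL(X)$ fixing a point $P\in X$ forces $P$ to be one of the finitely many Weierstrass points.

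More concretely, I would proceed as follows. Step one: reduce to the case that $P$ is not fixed by any nontrivial element of $\SL(X)$, for otherwise a direct analysis of the (few) torsion elements — using McMullen's explicit generators for $\SL(L_{D,\epsilon})$ — pins $P$ down to a Weierstrass point. Step two: assuming $P$ has trivial stabilizer but finite orbit $\{P=P_1,\dots,P_k\}$, note that the subgroup $\SL(X;P)$ has index $k$ in $\SL(X)$, is therefore itself a lattice, and — crucially — is a lattice with the \emph{same trace field and same invariant} as $\SL(X)$ by the commensurability of \autoref{prop:commens}. Step three: invoke McMullen's classification (\autoref{prop:ClassificationVS}, \autoref{prop:ClassificationVS11}) together with the primitivity of $X$ to conclude that the branched cover marking $P$ would have to lie in the $\GL_2(\RR)$-orbit of $X$ again, which is impossible for a non-Weierstrass marked point by a genus/stratum count (the branched cover has larger genus). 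The contradiction shows $k$ cannot be finite unless $P$ is a Weierstrass point.

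The main obstacle is making Step three rigorous: one must rule out that a nontrivial branched cover of $X$, ramified only over the singularity and the marked periodic point $P$, can still be primitive and Veech. I would handle this by a Euler-characteristic/Riemann–Hurwitz computation showing such a cover has genus $>2$, combined with the fact (from the cited works of McMullen and Calta) that primitive Veech surfaces of genus $\geq 3$ with the relevant trace field do not exist in the required configuration — or, alternatively, by citing directly that a periodic point pulls back through every affine automorphism, so its orbit being finite would force the orbit closure under the full $\GL_2(\RR)$-action to behave like that of a Weierstrass point, which McMullen's dynamical results forbid for any other point. Either route reduces the claim to the cited classification theorems and a finite check on torsion elements.
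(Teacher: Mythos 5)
This statement is not proved in the paper at all: it is quoted verbatim from the literature (\cite{MMPerPoi}, Theorem 5.1), so there is no internal proof to compare your argument with. Judged on its own, your easy inclusion is fine: the hyperelliptic involution has derivative $-\id$, commutes with all affine maps, hence $\Aff(X)$ permutes the six Weierstrass points and each of them is periodic.

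The hard direction, however, has a genuine gap, and it is exactly at your Step three. Your intended contradiction -- that marking a non-Weierstrass periodic point $P$ and passing to a cover branched over $P$ and the singularity ``cannot'' yield a Veech surface, by a genus/stratum count plus McMullen's classification -- is not a contradiction at all. If $P$ were periodic, then $\SL(X;P)$ has finite index in $\SL(X)$, hence is a lattice, and by Gutkin--Judge (\autoref{prop:commens}) any such branched cover simply \emph{is} a Veech surface: a non-primitive one of higher genus. McMullen's classification theorems (\autoref{prop:ClassificationVS}, \autoref{prop:ClassificationVS11}) only classify \emph{primitive} Veech surfaces of genus $2$ and say nothing about non-primitive covers of genus $\geq 3$, so no Riemann--Hurwitz or stratum count can rule this configuration out; indeed covers branched over genuinely periodic points (e.g.\ Weierstrass points) realize precisely this situation. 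Your fallback -- citing ``McMullen's dynamical results'' to forbid finite orbits of other points -- is either unsupported or circular, since the statement that no other point has finite orbit is exactly the theorem to be proved. The known proof requires different input altogether: Möller's argument views a periodic point as a section of the universal family over the Teichmüller curve and uses real multiplication to show the associated class in the Mordell--Weil group is torsion, forcing the point to be a Weierstrass point. Without that (or some comparable new ingredient beyond the classification results and commensurability facts available in this paper), the hard direction remains unproved.
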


From now on, we choose the coordinates of points on the $L$-shaped surfaces such that the origin lies in the lower left corner. Moreover, for the identified points on the sides of the polygon, we choose the point with smaller coordinates. 

To find the connection points, we have to look at the \emph{periods} of the surfaces: to define them, let $\eta$ be the holomorphic one-form corresponding to $X$. The \emph{group of periods} of $X$ with respect to $\eta$ is the additive group $\{\int_\pi\eta \mid \pi\in H_1(X,\ZZ)\}<\CC\cong\RR^2$. All sides of the surface $L_{D,\epsilon}$ are horizontal or vertical and their lengths are in $\QQ(w)$. Thus the periods are contained in $\QQ(w)^2$. Since there is just one singularity, every saddle connection vector is also a period and hence also contained in $\QQ(w)^2$.

Furthermore, by Theorem A.1 from \cite{McMInfCo} the set of periodic directions of $L_{D,\epsilon}$ is precisely $\mathbb{P}^1(\QQ(w))$. In Section 3.2 of \cite{HSInf} translation surfaces with these properties are called \emph{of strong holonomy type}. It is shown there that exactly the points with both coordinates in the field $\QQ(w)$ are connection points. Thus we obtain:

\begin{prop}\label{prop: ConnPoints}
The connection points of $L_D$ are the points $P=\PointL{x_r+x_iw}{y_r+y_iw}$ with $w=\sqrt{\frac{D}{4}}$ and $x_r,x_i,y_r,y_i\in\QQ$.

The connection points of $L_{D,\pm 1}$ are the points $P=\PointL{x_r+x_iw}{y_r+y_iw}$ with $w=\frac{1+\sqrt{D}}{2}$ and $x_r,x_i,y_r,y_i\in\QQ$.
\end{prop}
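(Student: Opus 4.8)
The plan is to deduce \autoref{prop: ConnPoints} directly from the two facts quoted just before it: that each $L_{D,\epsilon}$ is \emph{of strong holonomy type} in the sense of Section~3.2 of \cite{HSInf}, and that for surfaces of strong holonomy type the connection points are exactly the points both of whose coordinates lie in the holonomy field $\QQ(w)$. So the real content is to verify that $L_{D,\epsilon}$ satisfies the two defining conditions of strong holonomy type, namely (i) the group of periods (equivalently, since there is a single singularity, the group of saddle connection holonomies) spans $\QQ(w)^2$ over $\QQ$, and (ii) the set of periodic directions is precisely $\mathbb{P}^1(\QQ(w))$.

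First I would pin down the holonomy field. All sides of the $L$-shaped polygon in \autoref{def:LDorLDeps} are horizontal or vertical with lengths in $\QQ(w)$ (here $w=\sqrt{D/4}$ in case a) and $w=\tfrac{1+\sqrt D}{2}$ in cases b), c)), so every side vector, hence every element of $H_1(X,\ZZ)$ paired against $\eta$, lies in $\QQ(w)^2$; conversely the horizontal and vertical unit-type periods coming from the two cylinders already generate a rank-$4$ $\ZZ$-module whose $\QQ$-span is all of $\QQ(w)^2$. This gives (i). For (ii) I would simply invoke Theorem~A.1 of \cite{McMInfCo}, which is quoted in the excerpt as saying the set of periodic directions of $L_{D,\epsilon}$ equals $\mathbb{P}^1(\QQ(w))$; combined with the fact that these surfaces are Veech surfaces (\autoref{prop:ClassificationVS}), every direction is either completely periodic or has a dense leaf, so the periodic directions are exactly $\mathbb{P}^1(\QQ(w))$ as required.

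With strong holonomy type established, the characterization from \cite{HSInf} applies verbatim: a nonsingular point $P$ is a connection point iff both coordinates of $P$ lie in $\QQ(w)$. Unwinding the coordinate conventions fixed in the paragraph preceding the proposition (origin at the lower-left corner of the $L$), a point of $\QQ(w)^2$ is exactly one of the form $P=\PointL{x_r+x_i w}{y_r+y_i w}$ with $x_r,x_i,y_r,y_i\in\QQ$, since $\{1,w\}$ is a $\QQ$-basis of $\QQ(w)$ in each of the three cases (as $D$ is not a square). Substituting the value of $w$ appropriate to each case then yields the two displayed statements.

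The main obstacle is purely bookkeeping rather than conceptual: one must be careful that the ``strong holonomy type'' hypothesis is stated in \cite{HSInf} for the \emph{absolute} periods / holonomy of saddle connections, and check that having a single singularity (so that $L_{D,\epsilon}\in\Omega M_2(2)$, by \autoref{thm:periodic}'s ambient setting) really does let one identify saddle-connection vectors with periods — this is the sentence ``since there is just one singularity, every saddle connection vector is also a period'' in the excerpt, and it is what makes condition (i) reduce to a span computation. Cases b) and c), where $w=\tfrac{1+\sqrt D}{2}$ is an algebraic integer but $\QQ(w)=\QQ(\sqrt D)$, need a one-line remark that the holonomy field is unchanged; no genuine difficulty arises there.
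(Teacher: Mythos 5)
Your proposal is correct and follows essentially the same route as the paper: the paper likewise notes that the side lengths (hence the periods, and the saddle-connection vectors, since there is a single singularity) lie in $\QQ(w)^2$, invokes Theorem A.1 of \cite{McMInfCo} for the periodic directions being $\mathbb{P}^1(\QQ(w))$, and then applies the ``strong holonomy type'' characterization of connection points from Section 3.2 of \cite{HSInf}. Your additional bookkeeping (the $\QQ$-basis $\{1,w\}$, the coordinate conventions, and the remark that $\QQ(w)=\QQ(\sqrt D)$ in cases b) and c)) is consistent with, and slightly more explicit than, the paper's own derivation.
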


To distinguish connection points we define the following:
\begin{defi}\label{defi:NPN}
For $P=\PointL{x_r+x_iw}{y_r+y_iw}$ with $x_r,x_i,y_r,y_i\in\QQ$ reduced fractions we define \emph{$N(P)$} to be the least common denominator of $x_r,x_i,y_r$ and $y_i$. Furthermore, we denote the set of all connection points $P$ with fixed $N(P)=N$ by \emph{$\PN$}.
\end{defi}

In fact, in our cases the periods are not only contained in $\QQ(w)$ but even in $\ZZ[w]$. This implies that all Veech group element entries are elements of $\ZZ[w]$. Moreover, if $\gamma$ is any element of the Veech group and $P\in \PN$ is any connection point of $L_{D,\epsilon}$, then the coordinates of $\gamma \circ P$ equal the coordinates of $\gamma \cdot P$ up to some period. This implies an important proposition concerning $N(P)$:
\begin{prop}\label{thm: orbitssameN}
The value $N(P)$ is an invariant for the orbit $\SL(L_{D,\epsilon})\circ P$, i.e. for $P\in\PN$ the whole orbit $\SL(L_{D,\epsilon})\circ P$ is contained in $\PN$.
\end{prop}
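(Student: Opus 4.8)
The plan is to unwind the definition of $N(P)$ and use the two structural facts just stated: first, that all Veech group entries lie in $\ZZ[w]$, and second, that applying $\gamma\in\SL(L_{D,\epsilon})$ to a connection point $P$ changes its coordinates only by a period, which also lies in $\ZZ[w]^2$. So I would fix $P=\PointL{x_r+x_iw}{y_r+y_iw}$ with $N(P)=N$, meaning $Nx_r, Nx_i, Ny_r, Ny_i\in\ZZ$, and fix $\gamma=\smatr{a}{b}{c}{d}\in\SL(L_{D,\epsilon})$ with $a,b,c,d\in\ZZ[w]$. The key computation is to write out the two coordinates of $\gamma\cdot P$, expand everything using $w^2 = \frac{D}{4}$ (resp.\ $w^2 = w + \frac{D-1}{4}$ in the $D\equiv 1$ case), collect the rational and the $w$-parts, and observe that $N$ times each of these four rational numbers is an integer. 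Concretely, $N\gamma\cdot P$ has coordinates in $\ZZ[w]$ because $\gamma$ has entries in $\ZZ[w]$ and $NP$ has coordinates in $\ZZ[w]$ (as $N x_r + N x_i w\in\ZZ[w]$, etc.); then adding the period (which lies in $\ZZ[w]^2$, so certainly $N\cdot(\text{period})\in\ZZ[w]^2$) to pass from $\gamma\cdot P$ to $\gamma\circ P$ keeps $N\gamma\circ P$ in $\ZZ[w]^2$.

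From $N\gamma\circ P\in\ZZ[w]^2$ one reads off that the least common denominator $N(\gamma\circ P)$ divides $N$. To finish, I would run the same argument with $\gamma^{-1}$ in place of $\gamma$ and $\gamma\circ P$ in place of $P$: since $\gamma\inv$ also has entries in $\ZZ[w]$ (it is $\smatr{d}{-b}{-c}{a}$, still integral over $\ZZ[w]$ because $\det\gamma=1$), the same reasoning gives $N(P) = N(\gamma\inv\circ(\gamma\circ P))$ divides $N(\gamma\circ P)$. Combining the two divisibilities yields $N(\gamma\circ P)=N(P)=N$, so $\gamma\circ P\in\PN$, and since $\gamma$ was arbitrary the whole orbit lies in $\PN$.

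There is one subtlety worth care: the claim "the coordinates of $\gamma\circ P$ equal those of $\gamma\cdot P$ up to a period" is stated in the excerpt, but to use it cleanly I should make sure the normalization convention for representatives of identified boundary points (choosing the point with smaller coordinates, stated just before \autoref{prop: ConnPoints}) does not introduce denominators beyond those already in $\ZZ[w]$ — it does not, since the identifications of $L_{D,\epsilon}$ are by translations whose vectors are saddle-connection vectors, hence periods in $\ZZ[w]^2$. I would also need to note that reduced fractions with a common denominator $N$ means precisely $Nx_r,\dots\in\ZZ$ with $N$ minimal, and that "least common denominator" is well-defined; this is elementary.

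The main obstacle, such as it is, is purely bookkeeping: expanding $(a+a'w)(x_r+x_iw)$-type products and verifying that the $w$-part and the rational part each scale correctly by $N$ in both the $D\equiv 0\bmod 4$ and $D\equiv 1\bmod 4$ cases (the latter having the messier relation $w^2=w+\frac{D-1}{4}$, though $\ZZ[w]$ is still a ring). I expect no genuine difficulty — the proof is essentially the observation that $\ZZ[w]$ is a ring, that $\SL(L_{D,\epsilon})$ consists of matrices over this ring, and that passing to $\PN$ only records the denominator of the coordinates, which such matrices cannot increase or decrease because they are invertible over $\ZZ[w]$.
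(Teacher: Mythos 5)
Your argument is correct and is essentially the paper's own: the paper states the proposition as an immediate consequence of the two facts you invoke (Veech group entries and periods lie in $\ZZ[w]$, and $\gamma\circ P$ differs from $\gamma\cdot P$ by a period), giving no further written proof. You simply supply the details the paper leaves implicit, in particular the step with $\gamma\inv$ that upgrades the divisibility $N(\gamma\circ P)\mid N(P)$ to equality, using that $\{1,w\}$ is a $\ZZ$-basis of $\ZZ[w]$ in both cases of $D$.
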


\subsection{Infinitely Generated Veech Groups}
Veech surfaces are not the only known translation surfaces that have a non-elementary Veech group. At the beginning of this millennium McMullen and independently Hubert and Schmidt could prove the existence of translation surfaces with infinitely generated Veech groups of the first kind. We will briefly describe the two different approaches, but concentrate on the second one.

In \cite{McMInfCo}, Theorem 10.1, McMullen shows that for translation surfaces of genus $2$ the limit set of the Veech group is either empty, a single point or the whole boundary $\partial \HH$. In particular, if the Veech group contains a hyperbolic element, the group is of the first kind, so the Veech group is infinitely generated if it is not a lattice.

Every non-primitive Veech surface of genus $2$ lies in the orbit of a square-tiled surface and for these surfaces all elements of the Veech group have rational trace. Together with the classification of primitive Veech surfaces of genus $2$ (\autoref{prop:ClassificationVS11}), this implies:

\begin{prop}[\cite{McMTor}, Theorem 1.3] Every translation surface $X\in\Omega M_2(1,1)$ which contains a hyperbolic element with irrational trace either is in the $\GL_2(\RR)$-orbit of the regular decagon surface or has an infinitely generated Veech group.
\end{prop}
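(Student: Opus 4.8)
The plan is to combine McMullen's trichotomy for the limit set in genus $2$ (recalled just above as \cite{McMInfCo}, Theorem 10.1) with the classification of primitive Veech surfaces in $\Omega M_2(1,1)$ (\autoref{prop:ClassificationVS11}) and the rationality of traces for square-tiled surfaces. So let $X\in\Omega M_2(1,1)$ contain an affine automorphism whose linear part is hyperbolic with irrational trace, and assume $X$ is \emph{not} in the $\GL_2(\RR)$-orbit of the regular decagon surface. We must show $\SL(X)$ is infinitely generated.

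First I would observe that $\SL(X)$ is of the first kind: it contains a hyperbolic element, so its limit set $\Lambda$ is nonempty and not a single point, hence by McMullen's trichotomy $\Lambda=\partial\HH$. By \autoref{lem:LattOrInfGen} (the dichotomy of \cite{HSInf}, Lemma 3), it therefore suffices to rule out that $\SL(X)$ is a lattice, i.e.\ that $X$ is a Veech surface. Suppose for contradiction that $X$ is a Veech surface in $\Omega M_2(1,1)$. Then $X$ is either primitive or non-primitive. If $X$ is primitive, \autoref{prop:ClassificationVS11} forces $X$ to lie in the $\GL_2(\RR)$-orbit of the regular decagon surface, contradicting our assumption. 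If $X$ is non-primitive, then (as stated in the paragraph preceding the proposition) $X$ lies in the $\GL_2(\RR)$-orbit of a square-tiled surface; but the Veech group of a square-tiled surface is commensurable with $\SL_2(\ZZ)$ — more directly, every affine automorphism of a square-tiled surface has rational linear part, hence rational trace — and this property is a $\GL_2(\RR)$-orbit invariant of the trace field. This contradicts the assumption that $X$ carries a hyperbolic affine automorphism with irrational trace. Hence $X$ is not a Veech surface, so $\SL(X)$ is not a lattice, and being of the first kind it is infinitely generated.

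The only point needing care — and the place I expect a referee to look hardest — is the claim that having an affine automorphism of irrational trace is incompatible with lying in the $\GL_2(\RR)$-orbit of a square-tiled surface. The resolution is that conjugating by $A\in\GL_2(\RR)$ replaces $\SL(X)$ by $A\,\SL(X)\,A^{-1}$, which has the same set of traces; and a square-tiled surface has Veech group contained in $\GL_2(\QQ)$ (after normalizing so the torus is the standard one), so all its traces are rational. Thus irrational trace is a genuine obstruction to being (in the orbit of) a square-tiled surface, and the two horns of the Veech-surface case are exactly the two cases excluded by the hypothesis. Everything else is a direct invocation of the results quoted above, so no further computation is required.
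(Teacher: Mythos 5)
Your argument is correct and matches the paper's own reasoning: the paper establishes this proposition exactly by combining McMullen's limit-set trichotomy (hence first kind, hence lattice or infinitely generated via \autoref{lem:LattOrInfGen}), the classification of primitive Veech surfaces in $\Omega M_2(1,1)$ (\autoref{prop:ClassificationVS11}), and the rationality of traces for square-tiled surfaces, which is conjugation-invariant along the $\GL_2(\RR)$-orbit. No gaps; your extra care about trace invariance under $\GL_2(\RR)$-conjugation is exactly the right point to make explicit.
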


\cite{McMInfCo} also gives concrete examples of such surfaces.

Additionally, Hubert and Schmidt constructed translation surfaces, such that the Veech group is of the first kind, but not a lattice. We sketch this construction from \cite{HSInf} which we will use during this paper.

\begin{defi}
Given a translation surface $X$ with singularities $S(X)$ and a non-singular point $P$, define the \emph{marking of $X$ at $P$} as a new translation surface $(X;P)$ by adding $P$ to the set of singularities. Let the group of affine diffeomorphisms of $(X;P)$ be the subgroup of $\Aff(X)$ consisting of the maps that fix $P$. Accordingly, define the Veech group $\SL(X;P)$ as the stabilizer subgroup of $P$ in $\SL(X)$.
\end{defi}

As we will see in \autoref{rem:vertorbit}, there is a bijection between the right cosets of $\SL(X)$ modulo $\SL(X;P)$ and the orbit points of $P$ under the action of $\SL(X)$. Thus the non-periodicity of $P$ guarantees that $\SL(X;P)$ is of infinite index in $\SL(X)$ and hence not a lattice.

To show that $\SL(X;P)$ is of the first kind, Hubert and Schmidt use Proposition 3.1 of \cite{Vor}, which states that the set of directions of geodesic segments emanating from $P$ and encountering a singularity is dense in $S^1=\partial\HH$. If $P$ is a connection point, this means the set of directions of saddle connections through $P$ is dense in $S^1$. To each of these saddle connections belongs a parabolic element of $\SL(X)$ fixing the saddle connection pointwise, in particular fixing $P$. A group containing a parabolic element with eigenvector $\klvek{x}{y}$ has the fixed direction $\frac{y}{x}$ in its limit set. Hence the limit set of $\SL(X;P)$ is dense in $S^1$, which is not possible for groups of the second kind (cf.\@ e.g.\@ Theorem 3.4.6 of \cite{Kat}). This yields

\begin{prop}[\cite{HSInf}, Proposition 1] Let $P$ be a non-periodic connection point on a Veech surface $X$. Then $\SL(X;P)$ is infinitely generated and of the first kind.
\end{prop}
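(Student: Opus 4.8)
The plan is to prove the two assertions separately — that $\SL(X;P)$ is not a lattice, and that it is of the first kind — and then invoke \autoref{lem:LattOrInfGen}. Since $X$ is a Veech surface, $\SL(X)$ is a lattice, so every finite-index subgroup of it is again a lattice; hence a subgroup of the first kind that is \emph{not} a lattice must be infinitely generated, which is the conclusion.

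\textbf{Not a lattice.} Here I would use the orbit--stabilizer correspondence mentioned above: the cosets of $\SL(X;P)$ in $\SL(X)$ are in bijection with the orbit $\SL(X)\circ P$ via $\gamma\,\SL(X;P)\mapsto\gamma\circ P$. By hypothesis $P$ is non-periodic, so this orbit is infinite and $\SL(X;P)$ has infinite index in $\SL(X)$. A discrete subgroup of infinite index in a lattice has infinite covolume (covolume is multiplicative in the index), hence is not a lattice.

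\textbf{Of the first kind.} This is the substantive step; the goal is to show that the limit set $\Lambda$ of $\SL(X;P)$ is all of $\partial\HH$. Because $P$ is a connection point, every separatrix through $P$ lies on a saddle connection, and by Proposition 3.1 of \cite{Vor} the directions of geodesic segments from $P$ meeting a singularity are dense in $S^1$; combining the two, the directions of saddle connections through $P$ are dense in $S^1=\partial\HH$. Fix such a saddle connection $\sigma$. Its direction is a periodic direction of $X$, and since $X$ is a Veech surface this direction is stabilized by a parabolic element $\gamma\in\SL(X)$. Replacing $\gamma$ by a suitable power, I can arrange that $\gamma$ restricts to the identity on $\sigma$: in coordinates making $\sigma$ horizontal, $\gamma$ acts by a horizontal shear, fixes the two singular endpoints of $\sigma$, and has unit derivative along $\sigma$, so a high enough power fixes $\sigma$ pointwise — in particular it fixes $P\in\sigma$ and therefore lies in $\SL(X;P)$. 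The parabolic fixed point of this element on $\partial\HH$ is exactly the point corresponding to the direction of $\sigma$, so that point lies in $\Lambda$. Letting $\sigma$ vary, $\Lambda$ contains a dense subset of $\partial\HH$, and since $\Lambda$ is closed we get $\Lambda=\partial\HH$. In particular $\Lambda$ is infinite, so $\SL(X;P)$ is non-elementary, and a non-elementary group with full limit set is by definition of the first kind (equivalently, it cannot be of the second kind, whose limit set is nowhere dense, cf.\@ Theorem 3.4.6 of \cite{Kat}).

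\textbf{Main obstacle.} The delicate point is the middle of the second step: verifying that each saddle-connection direction through $P$ genuinely contributes an element of $\SL(X;P)$ whose fixed point lies in that direction. This rests on two features special to the setting — that on a Veech surface every direction of a saddle connection is completely periodic and stabilized by a parabolic of the Veech group (Veech dichotomy), and that the connection-point hypothesis is precisely what lets one extend the separatrix through $P$ to a full saddle connection, so that the corresponding parabolic (a product of cylinder twists) fixes not merely the direction but the point $P$ itself. Once these are in place, density of parabolic fixed points together with closedness of $\Lambda$ finishes the argument.
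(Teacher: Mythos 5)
Your proposal is correct and follows essentially the same route as the paper: infinite index via the coset--orbit bijection rules out being a lattice, density of saddle-connection directions through $P$ (Vorobets) together with parabolic multi-twists fixing $P$ gives a dense limit set and hence first kind, and \autoref{lem:LattOrInfGen} then yields infinite generation. The extra details you supply (covolume multiplicative in the index, passing to a power of the parabolic to fix the saddle connection pointwise) are exactly the points the paper leaves implicit, and they are handled correctly.
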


By \autoref{prop:commens} the Veech groups of affine coverings of $(X;P)$ are commensurable to $\SL(X;P)$. Thus they are also infinitely generated and it remains to find Veech surfaces with non-periodic connection points. But we have seen in \autoref{prop: ConnPoints} that the Veech surfaces in $\Omega M_2(2)$ have infinitely many connection points; thus the following proposition holds and gives us candidates for Veech surfaces of the first kind with critical exponent strictly smaller than $1$ (for the definition of the critical exponent see \autoref{critexp}):

\begin{prop}
An affine covering of the Veech surface $L_{D,\epsilon}$ ramified over the singularity and a point $P\in\PN$ for any $N$ has an infinitely generated Veech group of the first kind.
\end{prop}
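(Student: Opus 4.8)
The plan is to assemble this proposition directly from the ingredients already collected in the excerpt, without any new technical work. First I would invoke \autoref{prop: ConnPoints}: the Veech surface $L_{D,\epsilon}$ has connection points $P=\PointL{x_r+x_iw}{y_r+y_iw}$ with $x_r,x_i,y_r,y_i\in\QQ$, and for any prescribed least common denominator $N$ the set $\PN$ is nonempty (for instance one may take a point with $N(P)=N$ by choosing the rational coordinates suitably). So the surface admits at least one connection point $P\in\PN$. Next I would recall from \autoref{thm:periodic} that the only periodic points on a primitive Veech surface in $\Omega M_2(2)$ are the six Weierstrass points (the fixed points of the hyperelliptic involution); since these six points are not of the above form with nontrivial denominator — they lie in a finite $\SL(X)$-orbit whereas, by \autoref{thm: orbitssameN}, $N(P)$ is an orbit invariant and there are connection points with arbitrarily large $N(P)$ — one sees that a connection point $P\in\PN$ can be chosen to be non-periodic. (More carefully: the six periodic points have fixed coordinates, hence a fixed value of $N$; pick $N$ larger than this, or simply pick any $P\in\PN$ distinct from the six exceptional points, noting that being a periodic point forces $P$ to be one of those six.)

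Having produced a non-periodic connection point $P$ on the Veech surface $X=L_{D,\epsilon}$, I would then apply \autoref{prop:HSinfgen} (Proposition 1 of \cite{HSInf}, quoted just above the statement): the marked Veech group $\SL(X;P)$ is infinitely generated and of the first kind. Finally, for an affine covering $p:Y\to (X;P)$ ramified over the singularity and over $P$, \autoref{prop:commens} (Gutkin--Judge) gives that $\SL(Y)$ and $\SL(X;P)$ are commensurable. Since being infinitely generated and being of the first kind are both invariant under passing to finite-index subgroups and under conjugation in $\SL_2(\RR)$ — a finite-index subgroup of an infinitely generated group is infinitely generated, and the limit set is unchanged under these operations, so first-kindness persists — it follows that $\SL(Y)$ is infinitely generated and of the first kind. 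This is exactly the assertion.

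The only point that requires a moment's care, and which I expect to be the genuine (though still minor) obstacle, is the persistence of "infinitely generated" under commensurability rather than mere passage to a finite-index subgroup: commensurability allows passing up as well as down. But \autoref{lem:LattOrInfGen} dispatches this at once — a Fuchsian group of the first kind is either a lattice or infinitely generated, and being a lattice is a commensurability invariant; since $\SL(X;P)$ is of the first kind and not a lattice (it has infinite index in $\SL(X)$ by the bijection with the infinite orbit of $P$, cf.\ \autoref{rem:vertorbit}), no group commensurable to it can be a lattice, hence $\SL(Y)$, being of the first kind, must be infinitely generated. Everything else is bookkeeping with results already in hand.
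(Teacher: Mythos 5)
Your argument is correct and follows essentially the same route as the paper: non-periodic connection points in $\PN$ exist because only six points of $L_{D,\epsilon}$ are periodic, Hubert--Schmidt then gives that $\SL(L_{D,\epsilon};P)$ is infinitely generated and of the first kind, and the Gutkin--Judge commensurability statement transfers this to the Veech group of the ramified covering. The extra care you take in verifying that first-kindness and infinite generation survive commensurability (via the lattice-or-infinitely-generated dichotomy together with the infinite index of $\SL(L_{D,\epsilon};P)$ coming from the infinite orbit of $P$) is precisely the bookkeeping the paper leaves implicit, so there is nothing to add.
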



\section{Amenability of Graphs and the Combinatorial Laplacian}\label{sect:Schreier} 
In this section we introduce and analyze the Cheeger constant of graphs and the concept of Schreier graphs. As we will see in \autoref{critexp} to estimate the Cheeger constant of Schreier graphs will be the main task of this paper. Moreover, we describe the bottom of the spectrum of the combinatorial Laplacian on a graph and the connection to the Cheeger constant in the second subsection.
\subsection{Amenability of Graphs}
\subsubsection{The Cheeger Constant}
Let $G=(V,E)$ be a graph with vertex set $V$ and edge set $E$. For any subset $M\subset V$ we define the (vertex-) \emph{boundary} $\partial M\subset M$ to consist of the vertices of $M$ that have a neighbor in $V-M=M^c$. Accordingly, the \emph{interior} is $\mathring{M}:=M-\partial M$.
 \begin{defi}\label{def:cheegerconst}
 Let $G=(V,E)$ be a graph. For a finite nonempty set of vertices $M\subset V$ we define $c(M):=\frac{|\partial M|}{|M|}$. The \emph{Cheeger constant} of $G$ is \[\mathfrak{c}(G):=\inf_{\textnormal{finite } M\subset V} c(M).\] If $\mathfrak{c}(G)=0$, the graph $G$ is called \emph{amenable}; otherwise it is \emph{non-amenable}. 
  \end{defi}

In the following we will be interested in Cheeger constants of Schreier graphs, which are directed graphs that may contain loops and multi-edges. But since these do not affect the boundary of a vertex subset and thus the Cheeger constant, we can and will forget the directions of the edges, omit loops and consider just one edge for every multi-edge -- so we will deal with \emph{simple graphs}.

Unfortunately there are not many results on (non-)amenability of graphs. As described and used in \cite{Kap} the main technique is the following due to Bartholdi. He generalizes a result of Grigorchuk about Cayley graphs (\cite{Gri}) to all regular graphs:

\begin{prop}[\cite{Bar}] Let $G=(V,E)$ be a connected $d$-regular graph, choose a point $v_0\in V$ and let $a_n$ be the number of reduced edge-paths of length $n$ from $v_0$ to $v_0$. Then $G$ is amenable if and only if $\limsup_{n\rightarrow \infty} \sqrt[n]{a_n}=d-1$.
\end{prop}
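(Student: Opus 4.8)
The plan is to reformulate amenability of $G$ in terms of the $\ell^2$-spectrum of its adjacency operator $A$ on $\ell^2(V)$ — which also connects to the combinatorial Laplacian discussed in the next subsection — and then to relate that spectral datum to the growth of $a_n$ through a generating-function identity specific to $d$-regular graphs, generalizing Grigorchuk's cogrowth formula for groups. First I would record the two standard facts underlying the reformulation: by the graph-theoretic Cheeger inequalities (equivalently Kesten's criterion), a connected $d$-regular graph is amenable iff $\|A\|_{\ell^2(V)} = d$; and, writing $w_n$ for the number of closed walks of length $n$ at $v_0$, the spectral theorem for the self-adjoint operator $A$ together with a short connectivity argument yields $\limsup_{n\to\infty} w_n^{1/n} = r$, where $r := \|A\|_{\ell^2(V)}$. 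It is also useful to describe $a_n$ directly: if $p\colon T_d\to G$ is the universal covering (a tree, as $G$ is $d$-regular) and $\tilde v_0$ a lift of $v_0$, then, since there is a unique reduced path between two vertices of a tree, reduced edge-paths $v_0\to v_0$ of length $n$ correspond bijectively to the points of the fibre $p^{-1}(v_0)$ at tree-distance $n$ from $\tilde v_0$. As the $n$-sphere of $T_d$ has $d(d-1)^{n-1}$ vertices, this already gives $\limsup_n a_n^{1/n}\le d-1$; the task is to pin down when equality occurs.

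The core step is a combinatorial identity relating $N(t):=\sum_n a_n t^n$ to $W(t):=\sum_n w_n t^n$. I would decompose an arbitrary closed walk $\omega$ at $v_0$ into its non-backtracking reduction $\bar\omega = e_1\cdots e_m$ — a reduced edge-path, counted by $N$ — together with, at $v_0$ and at each junction $t(e_i)=o(e_{i+1})$, an \emph{excursion}: a closed walk freely reducing to the trivial word. The point is that excursions lift bijectively to closed walks at the root of $T_d$, so their generating functions — including the variants carrying the first-/last-letter admissibility constraints that make this decomposition a genuine bijection — depend only on $d$ and are computable from the Green's function of $T_d$. Assembling the pieces yields an identity of the form: $N(u)$ equals an explicit rational function of $u$ depending only on $d$, times $W(t)$, under the substitution $t = \tfrac{u}{1+(d-1)u^2}$; moreover the rational factor has poles only at $u\in\{1,\,1/\sqrt{d-1}\}$ and $\tfrac{dt}{du}$ vanishes only at $u = 1/\sqrt{d-1}$.

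From this identity I would read off $R := R_{\mathrm{conv}}(N)$. Since $W$ and $N$ have non-negative coefficients, Pringsheim's theorem puts a singularity of each at its radius of convergence on the positive real axis — so $W$ is singular at $1/r$. Now $r\ge 2\sqrt{d-1}$ always (Kesten's lower bound for $d$-regular graphs; indeed $r=d$ when $G$ is finite), hence the smaller positive preimage $u_* := \tfrac{2}{r+\sqrt{r^2-4(d-1)}}$ of $1/r$ satisfies $u_*\le 1/\sqrt{d-1}\le 1$; on $[0,u_*)$ the map $u\mapsto t(u)$ increases into $[0,1/r)$, where $W(t(u))$ and the rational factor are both finite, so $N$ is analytic on $[0,u_*)$ and $R\ge u_*$. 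When $r>2\sqrt{d-1}$ — so $u_*<1/\sqrt{d-1}$ and $t'(u_*)\neq 0$ — the singularity of $W$ at $1/r$ pulls back to a genuine singularity of $N$ at $u_*$, once one checks the rational factor is nonzero there; thus $R=u_*$ and $\limsup_n a_n^{1/n} = 1/u_* = \tfrac{r+\sqrt{r^2-4(d-1)}}{2}$. (In the borderline case $r=2\sqrt{d-1}$ one still gets $R\ge 1/\sqrt{d-1}>1/(d-1)$ for $d\ge 3$, hence $\limsup_n a_n^{1/n}<d-1$ anyway.)

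To conclude: $\varphi(x):=\tfrac{x+\sqrt{x^2-4(d-1)}}{2}$ is strictly increasing on $[2\sqrt{d-1},d]$ with $\varphi(d)=\tfrac{d+(d-2)}{2}=d-1$, and $r\le d$. So if $G$ is amenable then $r=d$ and $\limsup_n a_n^{1/n}=\varphi(d)=d-1$; conversely $\limsup_n a_n^{1/n}=d-1$ gives $\varphi(r)\ge\varphi(d)$, hence $r=d$, hence $G$ amenable. (For $d=2$ a cycle is amenable with $\limsup_n a_n^{1/n}=1=d-1$ directly, while an infinite path or ray — where $a_n$ vanishes eventually — is a degenerate exception one should exclude, e.g. by requiring $G$ to contain a cycle; for $d\ge 3$ no such issue arises since trees are non-amenable.) I expect the main obstacle to be the identity of the second paragraph — fixing the admissible-excursion constraints so that the walk decomposition is an honest bijection, and then verifying that the enumeration collapses to the claimed universal algebraic relation; the analytic bookkeeping of the third paragraph (apparent versus genuine singularities, the two uses of Pringsheim, the borderline $r=2\sqrt{d-1}$) is routine but must be handled with care.
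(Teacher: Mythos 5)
A preliminary remark for calibration: the paper does not prove this proposition at all. It is quoted from Bartholdi \cite{Bar} (generalizing Grigorchuk's cogrowth criterion \cite{Gri}) only as motivation, and is then explicitly set aside -- the non-amenability needed later is obtained by the elementary Cheeger-constant arguments, not by counting reduced paths. So the benchmark for your attempt is Bartholdi's and Grigorchuk's original argument, and your outline follows exactly that route: the Kesten-type criterion (amenable iff $\|A\|=d$, which does follow from the paper's Cheeger inequalities together with $\sup\operatorname{spec}A=\|A\|$ for positivity-preserving self-adjoint operators), the identity $\limsup_n w_n^{1/n}=\|A\|$, the lift of reduced closed paths to geodesic segments in the universal cover $T_d$, and above all the cogrowth identity linking $N$ to $W$ via $t=\frac{u}{1+(d-1)u^2}$, followed by singularity analysis using the monotone function $\varphi(x)=\frac{x+\sqrt{x^2-4(d-1)}}{2}$ with $\varphi(d)=d-1$.

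As a proof, however, the submission has a genuine gap, and you flag it yourself: the walk-decomposition identity of your second paragraph -- the cogrowth formula -- is asserted, not established, and it carries essentially all the content of the theorem; the spectral reformulation and the analytic bookkeeping around it are standard. Making the decomposition into a reduced core plus admissible excursions an honest bijection, computing the excursion series (with the first-/last-step constraints) from the Green function of $T_d$, and checking that everything collapses to the claimed universal algebraic relation is precisely the work done in \cite{Bar} and \cite{Gri}; until that is carried out the argument is a plan rather than a proof. Two smaller items must also be settled in a write-up: (i) as you note, the statement as quoted fails in degenerate cases (the bi-infinite $2$-regular path is amenable yet has $a_n=0$ for $n\geq 1$), so one must exclude such cases or restrict to $d\geq 3$, where an amenable $d$-regular graph cannot be a tree and reduced closed paths exist; and (ii) in the direction ``amenable $\Rightarrow\limsup_n a_n^{1/n}=d-1$'' you must actually verify that the rational factor neither vanishes nor has a pole at $u_*$ and that the singularity of $W$ at $1/r$ cannot be absorbed by the substitution (which can only happen at the branch point $r=2\sqrt{d-1}$, excluded when $r=d\geq 3$); these are exactly the points where the inverted form of the identity you chose must be handled with care.
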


Schreier graphs are regular, but the groups that come into play are too complicated to count the (reduced) edge-paths from the base point to itself. Hence we want to find another (more elementary) method to prove non-amenability of a Schreier graph. Let us begin by collecting some facts about Cheeger constants that will help us:

First it is obvious that a graph containing a finite connected component $C$ is amenable since $|\partial C|=0$. But on the other hand we have 
\begin{prop}\label{PerPoiWegl}
If $G=(V,E)$ is an amenable graph without any finite connected component, then for any finite subset $F\subset V$ the subgraph induced by $V'=V - F$ is amenable, too. 
\end{prop}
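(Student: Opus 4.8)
The plan is to start from a near-optimal set for the Cheeger constant of $G$ and simply delete the finitely many forbidden vertices. Given $\varepsilon>0$ I would pick a finite nonempty $M\subset V$ with $c(M)<\delta$ for a suitable $\delta=\delta(\varepsilon,F)$, and then show that $M':=M-F$ witnesses a small Cheeger ratio in the subgraph $G'$ induced by $V'=V-F$. Two things could go wrong: $M'$ could become empty, or $|M|$ could be so close to $|F|$ that passing from $|M|$ to $|M'|\ge|M|-|F|$ ruins the estimate. The point that rules both out --- and the only place the hypothesis on finite components enters --- is that in a graph with no finite connected component, \emph{every} set with small Cheeger ratio is automatically large.

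First I would record that fact: if $N\subset V$ is finite, nonempty and $\partial N=\emptyset$, then no vertex of $N$ has a neighbour outside $N$, so $N$ is a union of connected components of $G$ and $G$ has a finite component. Under the hypothesis this is impossible, so $\partial N\ne\emptyset$, i.e.\ $|\partial N|\ge 1$, for every finite nonempty $N\subset V$; in particular $c(N)<\delta$ forces $|N|>1/\delta$. Now fix $\varepsilon>0$; we may assume $F\ne\emptyset$, since otherwise $G'=G$. Put $\delta:=\min\{\varepsilon/2,\,1/(2|F|)\}$ and choose, using $\mathfrak{c}(G)=0$, a finite nonempty $M\subset V$ with $c(M)<\delta$. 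Then $|M|>1/\delta\ge 2|F|$, so $M':=M-F$ is nonempty and $|M'|\ge|M|-|F|>|M|/2$.

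It remains to compare boundaries. A short set computation gives $V'-M'=V-(F\cup M)$, and since $M'=M-F\subseteq M$ and every edge of $G'$ is an edge of $G$, any vertex $v$ of the boundary of $M'$ taken in $G'$ lies in $M$ and has a $G$-neighbour in $V-M$, hence lies in $\partial M$. Therefore the $G'$-boundary $\partial_{G'}M'$ of $M'$ satisfies $|\partial_{G'}M'|\le|\partial M|$, and
\[
\frac{|\partial_{G'}M'|}{|M'|}\ \le\ \frac{|\partial M|}{|M'|}\ <\ \frac{|\partial M|}{|M|/2}\ =\ 2\,c(M)\ <\ 2\delta\ \le\ \varepsilon .
\]
As $\varepsilon>0$ was arbitrary this yields $\mathfrak{c}(G')=0$, i.e.\ $G'$ is amenable. (If $V'=\emptyset$ then $V=F$ is finite, contradicting the hypothesis unless $V=\emptyset$, which is trivial.)

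The whole argument is short; the one step that genuinely needs care --- and the ``main obstacle'' --- is the size bound $|M|>1/\delta$, which converts the qualitative hypothesis ``no finite connected component'' into the quantitative statement that removing the fixed finite set $F$ costs at most a factor $2$ in the Cheeger ratio. Everything else is bookkeeping with the definition of the vertex boundary.
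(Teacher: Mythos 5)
Your proof is correct and follows essentially the same route as the paper: both exploit that the absence of finite components forces $|\partial M|\geq 1$, hence any set with small Cheeger ratio is large, and then compare $M'=M-F$ with $M$ after noting $|\partial_{G'}M'|\leq|\partial M|$. You phrase it with an explicit $\varepsilon$--$\delta$ bound while the paper argues with a sequence $M_i$ and a limit, and you additionally spell out the nonemptiness of $M'$, but these are only presentational differences.
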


\begin{proof}
Since $G$ is amenable, there exists a sequence of finite sets $M_i\subset V$ with $c(M_i)\rightarrow 0$ as $i\rightarrow \infty$. There is no finite connected component, hence each $M_i$ has at least one boundary vertex, i.e.\@ $|\partial M_i|\geq 1$. Thus $|M_i|$ must tend to $\infty$ to permit $c(M_i)=\frac{|\partial M_i|}{|M_i|}\rightarrow 0$.

Now let us set $M_i':=M_i - P=M_i\cap V'$ and look at the sequence $(c(M_i'))$:
\[c(M_i')=\frac{|\partial M_i'|}{|M_i'|}\leq \frac{|\partial M_i|}{|M_i|-|F|}.\]
The reciprocal of this term is $\frac{|M_i|-|F|}{|\partial M_i|}=\frac{|M_i|}{|\partial M_i|}-\frac{|F|}{|\partial M_i|}.$ The minuend tends to infinity, the subtrahend is bounded above by $|F|$ and hence $c(M_i')\rightarrow 0 \textnormal{ as } i\rightarrow \infty.$ Thus the subgraph induced by $V'$ is amenable.
\end{proof}

\begin{prop}
\label{MMR}
Let $G=(V,E)$ be graph with connected components $K_i$ and let $M$ be a finite subset of the vertex set $V$. Setting $M_i\coloneqq M\cap K_i$ the following holds: \[ \min_{i\mid M_i\neq \emptyset} \frac{|\partial M_i|}{|M_i|} \leq \frac{|\partial M|}{|M|} \leq \max_{i\mid M_i\neq \emptyset} \frac{|\partial M_i|}{|M_i|}. \]
\end{prop}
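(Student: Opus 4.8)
The plan is to reduce the statement to the elementary fact that a fraction built from a sum of nonnegative numerators over a sum of positive denominators (a "mediant" of the individual fractions) always lies between the smallest and the largest of those fractions. First I would observe that the boundary of $M$ decomposes disjointly along connected components: a vertex $v\in M_i=M\cap K_i$ has a neighbour in $V-M$ if and only if it has a neighbour in $K_i-M_i$, since all neighbours of $v$ lie in the same component $K_i$. Hence $\partial M=\bigsqcup_{i}\partial M_i$ (disjoint union, as the $K_i$ are disjoint), and therefore $|\partial M|=\sum_{i\mid M_i\neq\emptyset}|\partial M_i|$ and $|M|=\sum_{i\mid M_i\neq\emptyset}|M_i|$.

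Next I would invoke the mediant inequality. Write $I:=\{i\mid M_i\neq\emptyset\}$, which is a finite index set since $M$ is finite, and set $p_i:=|\partial M_i|\geq 0$ and $q_i:=|M_i|\geq 1$ for $i\in I$. Let $m:=\min_{i\in I}p_i/q_i$ and $M':=\max_{i\in I}p_i/q_i$. Then $p_i\geq m\,q_i$ and $p_i\leq M'\,q_i$ for every $i\in I$; summing over $i\in I$ gives $m\sum_i q_i\leq\sum_i p_i\leq M'\sum_i q_i$, and dividing by $\sum_i q_i=|M|>0$ yields
\[
\min_{i\in I}\frac{|\partial M_i|}{|M_i|}\;\leq\;\frac{|\partial M|}{|M|}\;\leq\;\max_{i\in I}\frac{|\partial M_i|}{|M_i|},
\]
which is exactly the claim.

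I do not expect any serious obstacle here: the only thing to be careful about is the bookkeeping of which components are counted — one must restrict to the components $K_i$ that actually meet $M$, so that all denominators $|M_i|$ are strictly positive and the minimum and maximum are taken over a nonempty finite set (nonempty because $M\neq\emptyset$). Components with $M_i=\emptyset$ contribute nothing to either $|\partial M|$ or $|M|$ and are correctly excluded by the indexing $i\mid M_i\neq\emptyset$ in the statement. The fact that the ambient graph $G$ may be a Schreier graph with loops and multi-edges is irrelevant, since, as noted in the paragraph preceding Definition \ref{def:cheegerconst}, these features do not affect vertex boundaries.
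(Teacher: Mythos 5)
Your proof is correct and takes essentially the same route as the paper: both arguments rest on the decomposition $\partial M=\bigsqcup_i \partial M_i$ along connected components and on the fact that the ``mediant'' $\frac{\sum_i |\partial M_i|}{\sum_i |M_i|}$ lies between the minimum and maximum of the individual quotients. The only difference is cosmetic: you sum the inequalities $m\,|M_i|\leq |\partial M_i|\leq M'\,|M_i|$ over all components at once, whereas the paper verifies the two-component case by a direct computation and then proceeds by induction.
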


\begin{proof}
As the $M_i$ are in different connected components the boundary $\partial M_i$ is the intersection of $M_i$ with $\partial M$.
First consider the case of two components, so $M$ is the disjoint union $M_1 \sqcup M_2$, and $\partial M=(\partial M \cap M_1) \sqcup (\partial M \cap M_2)=\partial M_1\sqcup\partial M_2$. Without loss of generality we assume $c(M_1)=\frac{|\partial M_1|}{|M_1|}\leq \frac{|\partial M_2|}{|M_2|}=c(M_2)$. This is equivalent to $|\partial M_1|\leq \frac{|M_1|\cdot |\partial M_2|}{|M_2|}$. Thus we obtain \[c(M)=\frac{|\partial M|}{|M|}=\frac{|\partial M_1|+|\partial M_2|}{|M_1|+|M_2|}\leq \frac{\frac{|M_1|\cdot |\partial M_2|+|M_2|\cdot|\partial M_2|}{|M_2|}}{|M_1|+|M_2|}=\frac{|\partial M_2|}{|M_2|}=c(M_2).\]
By analogy, one gets $c(M)\geq c(M_1)$. Note that $M$ is finite and thus only finitely many $M_i$ are non-empty. Since $\min\{a,b,c\}=\min\{\min\{a,b\},c\}$, by induction we obtain the general case of arbitrary many connected components.
\end{proof}

\begin{cor}\label{connected}
Let $G$ be a graph with connected components $K_i$. Then \[\mathfrak{c}(G)=\inf_{i}\mathfrak{c}(K_i).\]
\end{cor}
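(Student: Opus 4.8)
The plan is to obtain the corollary as a direct consequence of \autoref{MMR}, by establishing the two inequalities $\mathfrak{c}(G)\geq\inf_i\mathfrak{c}(K_i)$ and $\mathfrak{c}(G)\leq\inf_i\mathfrak{c}(K_i)$ separately. Throughout I would use the observation, already recorded in the proof of \autoref{MMR}, that for a vertex subset contained in a single component $K_i$ the boundary taken in $G$ coincides with the boundary taken in the subgraph $K_i$, since by definition a connected component has no edges joining it to the rest of $G$.

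For the lower bound I would take an arbitrary finite nonempty $M\subset V$ and set $M_i:=M\cap K_i$. By \autoref{MMR} we have $c(M)\geq\min_{i\mid M_i\neq\emptyset}c(M_i)$. Each nonempty $M_i$ is a finite nonempty subset of the vertex set of $K_i$, so $c(M_i)\geq\mathfrak{c}(K_i)\geq\inf_j\mathfrak{c}(K_j)$; hence $c(M)\geq\inf_j\mathfrak{c}(K_j)$. Taking the infimum over all finite nonempty $M$ gives $\mathfrak{c}(G)\geq\inf_j\mathfrak{c}(K_j)$.

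For the upper bound I would fix an index $i$. Every finite nonempty subset $M'$ of the vertex set of $K_i$ is also a finite nonempty subset of $V$, and by the boundary identification above its $c$-value computed in $G$ agrees with the one computed in $K_i$; hence $\mathfrak{c}(G)\leq c(M')$. Taking the infimum over all such $M'$ yields $\mathfrak{c}(G)\leq\mathfrak{c}(K_i)$, and since $i$ was arbitrary, $\mathfrak{c}(G)\leq\inf_i\mathfrak{c}(K_i)$. Combining the two inequalities proves the claim. I do not expect any genuine obstacle here; the only point requiring a moment's care is the identification of the two notions of boundary, which is immediate from the definition of a connected component, and the harmless bookkeeping that only finitely many $M_i$ are nonempty so that the $\min$ in \autoref{MMR} is attained.
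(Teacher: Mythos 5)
Your proof is correct and follows essentially the same route as the paper: the easy inequality $\mathfrak{c}(G)\leq\inf_i\mathfrak{c}(K_i)$ from the definition, and the reverse inequality via \autoref{MMR} applied to the decomposition $M_i=M\cap K_i$. Your version of the lower bound is merely a slight streamlining (bounding $c(M)$ directly for every finite $M$ instead of passing through a minimizing sequence and interchanging infima), so there is nothing of substance to add.
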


\begin{proof}
That $\mathfrak{c}(G)\leq \inf_{i}\mathfrak{c}(K_i)$ is clear by definition of the Cheeger constant.

To show $\mathfrak{c}(G)\geq\inf_{i}\mathfrak{c}(K_i)$ let $M$ be an arbitrary (non-empty) finite set of vertices and $M_i:=M\cap K_i$ as in \autoref{MMR}. Thus $c(M_i)\leq c(M)$ for at least one $i$.

Now let $(M_j)_{j\in \mathbb{N}}$ be a sequence of finite vertex sets with $\inf_j c(M_j) = \mathfrak{c}(G)$ and let $M_{j,i}$ be $M_j\cap K_i$. Since for all $j$ there exists an $i$ such that $c(M_j)\geq c(M_{j,i})$, this implies \[\mathfrak{c}(G)=\inf_j c(M_j) \geq \inf_j \inf_i c(M_{j,i}) = \inf_{i,j} c(M_{j,i}) = \inf_i \inf_j c(M_{j,i})\geq \inf_i \mathfrak{c}(K_i).\qedhere\]
\end{proof}

\begin{prop}\label{omitedges}
If a graph $G'$ arises from a graph $G$ by ommiting some edges, then the Cheeger constants satisfy: \[\mathfrak{c}(G')\leq \mathfrak{c}(G).\]
\end{prop}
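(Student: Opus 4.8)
The plan is to exploit the elementary fact that deleting edges from a graph can only shrink vertex-boundaries, so that the quotient $c(M)=|\partial M|/|M|$ does not increase for any fixed finite vertex set, and then to pass to the infimum over all finite $M$.

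First I would observe that $G$ and $G'$ share the same vertex set $V$ (only edges are removed), so that for every finite nonempty $M\subset V$ both quantities $c_G(M)$ and $c_{G'}(M)$ are defined. The key step is the inclusion $\partial_{G'}M\subseteq\partial_G M$: if $v\in\partial_{G'}M$, then $v\in M$ and $v$ has a neighbour in $M^c$ along some edge of $G'$; since every edge of $G'$ is also an edge of $G$, the same vertex is a neighbour of $v$ in $G$, so $v\in\partial_G M$. Consequently $|\partial_{G'}M|\le|\partial_G M|$, and hence
\[c_{G'}(M)=\frac{|\partial_{G'}M|}{|M|}\le\frac{|\partial_G M|}{|M|}=c_G(M)\]
for every finite nonempty $M\subset V$.

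Finally I would take the infimum: for any such $M$ we have $\mathfrak{c}(G')=\inf_{M'}c_{G'}(M')\le c_{G'}(M)\le c_G(M)$, and since this holds for all finite nonempty $M\subset V$ it follows that $\mathfrak{c}(G')\le\inf_{M}c_G(M)=\mathfrak{c}(G)$. I do not expect any real obstacle here — the statement is just a monotonicity property of the Cheeger constant under edge deletion; the only point worth a moment's care is that the comparison is between two graphs on the \emph{same} vertex set, so that boundaries are computed for literally the same subsets $M$, and that $\partial M$ depends on the edge set only through the adjacency relation (not through loops, multiplicities, or orientations), exactly as already noted when reducing to simple graphs.
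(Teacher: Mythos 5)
Your argument is correct and follows the same route as the paper: the boundary in $G'$ is contained in the boundary in $G$ for every finite vertex set, so $c_{G'}(M)\leq c_G(M)$ pointwise, and the inequality passes to the infimum. You merely spell out the inclusion $\partial_{G'}M\subseteq\partial_G M$ in more detail than the paper does.
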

\begin{proof}
For every finite set $M$ clearly the number $|\partial' M|$ is at most $|\partial M|$, where $\partial' M$ denotes the (vertex) boundary of $M$ in $G'$. Hence \[\mathfrak{c}(G')=\inf_{\textnormal{finite } M\subset V} \frac{|\partial' M|}{|M|}\leq \inf_{\textnormal{finite } M\subset V} \frac{|\partial M|}{|M|}=\mathfrak{c}(G).\qedhere\]
\end{proof}

This finishes our observations on the Cheeger constant of general graphs. We continue with the computation of the Cheeger constant of an infinite $2k$-regular tree, the Cayley graph of the free group of rank $k$.

\begin{prop}\label{CayleyFk}
The infinite $2k$-regular tree has Cheeger constant $\frac{2k-2}{2k-1}$.
\end{prop}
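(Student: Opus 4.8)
The plan is to compute the Cheeger constant of the infinite $2k$-regular tree $T_{2k}$ directly, by exhibiting a sequence of finite vertex sets whose boundary ratio tends to $\frac{2k-2}{2k-1}$ and then showing no finite set can do better.

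For the upper bound, I would take $M=M_n$ to be the ball of radius $n$ around a fixed root vertex $v_0$. The number of vertices at distance $j$ from $v_0$ is $2k$ for $j=1$ and $2k(2k-1)^{j-1}$ for $j\geq 1$, so $|M_n| = 1 + 2k\sum_{j=0}^{n-1}(2k-1)^j = 1 + 2k\cdot\frac{(2k-1)^n-1}{2k-2}$, while $\partial M_n$ is exactly the sphere of radius $n$, of size $2k(2k-1)^{n-1}$. Dividing and letting $n\to\infty$, the ratio tends to $\frac{2k(2k-1)^{n-1}}{2k(2k-1)^{n}/(2k-2)} = \frac{2k-2}{2k-1}$, giving $\mathfrak{c}(T_{2k})\leq\frac{2k-2}{2k-1}$.

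For the lower bound — which I expect to be the main obstacle — I would show $|\partial M|\geq\frac{2k-2}{2k-1}|M|$ for every finite nonempty $M\subset V$. By \autoref{connected} it suffices to treat $M$ inducing a connected subtree (a subforest splits into components, and a single vertex already has $c(M)=1>\frac{2k-2}{2k-1}$). The key combinatorial input is that the interior $\mathring M = M - \partial M$ induces a subtree in which every vertex has degree $2k$ inside $M$; more precisely, writing $m=|M|$, $b=|\partial M|$, $i=|\mathring M|=m-b$, I would count the edges of the induced subtree in two ways. The induced subtree on $M$ has $m-1$ edges (it is a tree), and summing degrees, $2(m-1)=\sum_{v\in M}\deg_M(v)$. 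Each interior vertex contributes $2k$ and each boundary vertex contributes at most $2k-1$ (it has a neighbor outside $M$, so at most $2k-1$ inside). Hence $2(m-1)\leq 2k\cdot i + (2k-1)b = 2k(m-b)+(2k-1)b = 2km - b$, which rearranges to $b\leq 2km-2m+2 = (2k-2)m+2$. That is the wrong direction, so instead I combine it with the fact that a connected $M$ with a boundary vertex forces enough leaves: actually the clean argument is to bound $b$ from below. Observe that the boundary vertices of $M$ include all leaves of the induced subtree, and by the degree count $\sum_{v\in\mathring M}(2k - \deg_M(v))=0$ while $\sum_{v\in\partial M}(2k-\deg_M(v))\geq b$; combining with $\sum_v(2k-\deg_M(v)) = 2km - 2(m-1)$ gives $2km-2m+2 \geq \sum_{v\in\partial M}(2k-\deg_M(v))\geq b$, i.e. again $b\leq(2k-2)m+2$, which bounds $c(M)$ from above, not below. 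The correct lower bound comes from iterating the ball estimate: among all connected $M$ of size $m$, the boundary is minimized by balls (a standard isoperimetric fact on trees, provable by a compression/exchange argument moving a boundary vertex of maximal depth to fill an incomplete lower level), and for balls we computed the ratio decreases monotonically to $\frac{2k-2}{2k-1}$. Hence $c(M)\geq\frac{2k-2}{2k-1}$ for all finite $M$, and $\mathfrak{c}(T_{2k})\geq\frac{2k-2}{2k-1}$.

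Combining the two bounds yields $\mathfrak{c}(T_{2k})=\frac{2k-2}{2k-1}$, as claimed. The delicate point is the tree isoperimetric inequality in the lower bound; if a fully self-contained exchange argument is too long, one may instead invoke \autoref{MMR} and \autoref{omitedges} together with the known amenability characterization, but I would prefer the direct ball-compression proof since it also pins down the extremal sets.
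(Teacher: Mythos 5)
Your upper bound (balls) is fine and matches the paper, but the lower bound -- which you rightly identify as the crux -- is not actually established. Your degree count is run in the unhelpful direction (bounding boundary degrees above by $2k-1$ only yields $b\leq(2k-2)m+2$), and the replacement you offer is an appeal to an unproved claim, namely that among connected sets of a given size the vertex boundary is minimized by balls. That claim is only sketched ("compression/exchange argument"), it is not a citable result in this paper, and even if granted it only speaks about cardinalities that are exactly ball sizes; for intermediate sizes the would-be extremal sets are balls plus partial spheres, whose boundary ratio you never compute. The alternative fallback via \autoref{MMR}, \autoref{omitedges} and "the known amenability characterization" cannot work either, since amenability only distinguishes zero from nonzero Cheeger constant and gives no access to the exact value $\frac{2k-2}{2k-1}$. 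So as written the proof has a genuine gap precisely at its main step.

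The frustrating part is that you were one line away: keep your edge count $2(m-1)=\sum_{v\in M}\deg_M(v)$ for a connected induced subtree, use that interior vertices have $\deg_M(v)=2k$ exactly and that every boundary vertex of a connected $M$ with $m\geq 2$ has $\deg_M(v)\geq 1$. This gives $2(m-1)\geq 2k(m-b)+b$, i.e. $(2k-1)b\geq(2k-2)m+2$, hence $c(M)=\frac{b}{m}\geq\frac{2k-2}{2k-1}+\frac{2}{(2k-1)m}>\frac{2k-2}{2k-1}$, with the case $\mathring{M}=\emptyset$ giving $c(M)=1$ directly. This is exactly the paper's argument in a different guise: the paper roots the subtree at an interior vertex and observes that the root contributes $2k$ neighbours and every further interior vertex at least $2k-1$ new ones, so $|M|\geq 1+2k+(n-1)(2k-1)$ with $n=|\mathring{M}|$, and then $c(M)=1-\frac{n}{|M|}\geq\frac{2k-2}{2k-1}$. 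No isoperimetric theorem about extremal sets is needed; balls are only used, as in your upper bound, to show the infimum is attained in the limit.
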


\begin{proof}
We want to find a lower bound for $c(M)$ for finite vertex sets $M$. By \autoref{MMR} we can assume that $M$ is connected. If the interior $\mathring{M}$ is empty, $c(M)=1$. So let us assume $|\mathring{M}| = n\geq 1$ and let $v$ be an inner vertex. We mark $v$ as the root of $M$ and divide the other vertices of $M$ in levels corresponding to their distance to $v$. Now we can view the tree $M$ from the root and estimate the total number of vertices: these are $v$ and its $2k$ neighbors in $M$ and every further inner vertex contributes $2k-1$ neigbors on a higher level that are not already counted. Hence $|M|\geq 1+2k+(n-1)(2k-1)=n(2k-1+\frac{2}{n})$ and 
\begin{align} c(M)=1-\frac{|\mathring{M}|}{|M|}\geq 1-\frac{n}{n(2k-1+\frac{2}{n})}=\frac{2k-2+\frac{2}{n}}{2k-1+\frac{2}{n}}.\label{freegroupineq}\end{align}
For increasing $n$ this expression is decreasing, whence we get the lower bound for $n\rightarrow \infty$: \[c(M)\geq \frac{2k-2}{2k-1}.\]
Choosing $M=B_n$ to be a ball with arbitrary center (and radius $n$), the inequality (\ref{freegroupineq}) becomes an equality, the sequence $B_n$ converges to the last expression and we obtain $\frac{2k-2}{2k-1}$ as the Cheeger constant of the infinite $2k$-regular tree.
\end{proof}

\subsubsection{Schreier Graphs}
\begin{defi}\label{Schreier graph}
To a group $\Gamma$, a subgroup $\Pi<\Gamma$ and a nonempty finite subset $S\subset \Gamma$ we assign the following directed graph $G_{\Gamma,\Pi,S}$: 
\begin{itemize}
\item The vertex set $V$ is the set of right cosets of $\Pi$ in $\Gamma$: \[V(G_{\Gamma,\Pi,S}):=\{\Pi\gamma|\gamma\in \Gamma\}.\] 
\item The edge set is \[E^+:=V\times S.\] The edge $(\Pi\gamma,s)$ has $\alpha((\Pi\gamma,s))\coloneqq \Pi\gamma$ as \emph{starting vertex} and $\beta((\Pi\gamma,s))\coloneqq\Pi\gamma s$ as \emph{terminal vertex}.
\item Moreover, we add the label $s$ to every edge $(\Pi\gamma,s)$.
\end{itemize}
If $S^\pm=S\sqcup S^{-1}$ is a generating set for $\Gamma$, the graph $G_{\Gamma,\Pi,S}$ is called the \emph{Schreier graph} of $\Gamma$ with respect to $\Pi$ and $S$.
\end{defi}

\begin{rem}\label{rem:vertorbit} We are particularly interested in the following setting: The group $\Gamma$ is the Veech group of an $L$-shaped Veech surface $L_{D,\epsilon}$ and acts on the points of $L_{D,\epsilon}$ (from the left). The Schreier graph we investigate is $G=G_{\Gamma,\Gamma_P,S}$ where $\Gamma_P$ is the stabilizer of a non-periodic connection point $P$ and $S$ is a finite generating set of $\Gamma$. In this setting, the vertices of the Schreier graph, the right cosets, can be identified with the points of the $\Gamma$-orbit of $P$:
\begin{itemize}
\item the subgroup $\Gamma_P$ itself is identified with $P$.
\item the vertex $\Gamma_P\gamma^{-1}$ is identified with $\gamma(P)=:\gamma\circ P$
\end{itemize}
This is well-defined because $\Gamma_P\gamma_1^{-1}=\Gamma_P\gamma_2^{-1}$ is equivalent to $\gamma_1^{-1}\gamma_2\in \Gamma_P$ and thus to $\gamma_1\circ P= \gamma_2\circ P$. Because of the left-right twist the label $s$ of an edge from $\Gamma_P\gamma^{-1}$ to $\Gamma_P\gamma^{-1}s$ becomes an $s^{-1}$ on the edge from $\gamma \circ P$ to $s^{-1}\gamma \circ P$. But for the question of amenability the direction of the edges does not matter, so we do not have to be too worried about.
\end{rem}

The definition of Schreier graphs looks very similar to the well-known concept of Cayley graphs and indeed if $\Pi$ is a normal subgroup of $\Gamma$, the Schreier graph is the same as the Cayley graph of the factor group $\quotient{\Gamma}{\Pi}$. But if $\Pi$ is not a normal subgroup, there are some important differences, e.g. the Schreier graph, in contrast to Cayley graphs, is not vertex-transitive as one can see in the following example:

\begin{exa}\label{F2modZ}
Let $\Gamma=\left<a,b | - \right>$ be the free group of rank $2$. We set $\Pi=\left<a\right>$, the subgroup generated by $a$, and $S=\{a,b\}$. The resulting graph $G_{\Gamma,\Pi,S}$ is presented in figure \ref{RL4Tpicture}. In particular, the edge $(\Pi,a)$ is a loop and it is the only loop. Hence every automorphism of $G_{\Gamma,\Pi,S}$ has to fix the vertex $\Pi$, which implies that  $G_{\Gamma,\Pi,S}$ is not vertex-transitive.
\end{exa}

\begin{figure}[h!]
\begin{center}
\includegraphics[scale=1.1]{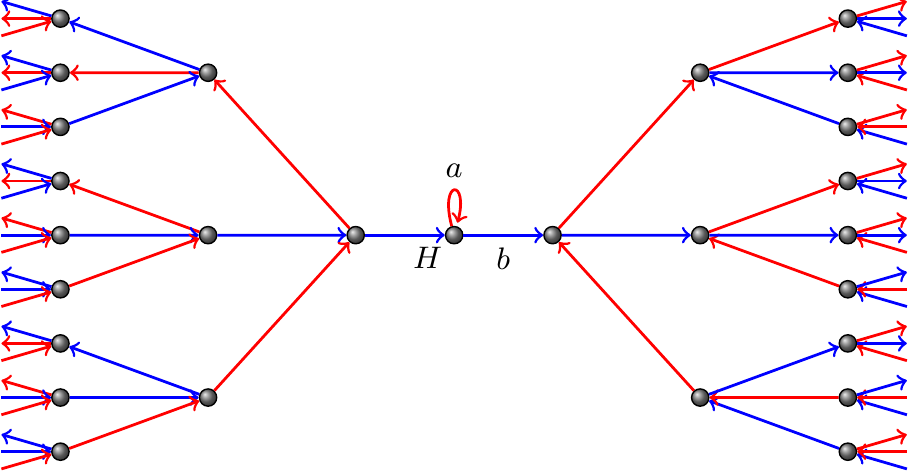}
\caption[Root-looped $4$-valent tree]{The \emph{root-looped $4$-valent tree}: a Schreier graph that is not vertex-transitive.}
\end{center}
\label{RL4Tpicture}
\end{figure}

\begin{defi}
We call the graph $G_{\Gamma,\Pi,S}$ from \autoref{F2modZ} above the \emph{root-looped $4$-valent tree} and denote it by $T_{rl4}$.
\end{defi}

\begin{prop}\label{cheegerconstRL4T}
The Cheeger constant of $T_{rl4}$ is $\mathfrak{c}(T_{rl4})=\frac{2}{3}$.
\end{prop}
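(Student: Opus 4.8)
The plan is to establish $c(M)\ge \frac23$ for every finite nonempty vertex set $M$ and then exhibit sets realising the value $\frac23$. First recall the shape of $T_{rl4}$: after omitting the two loops at the root vertex $v_0=\Pi$ (loops do not affect the Cheeger constant, as noted above) it is a tree in which $v_0$ has degree $2$ while every other vertex has degree $4$ --- indeed $\langle a\rangle a^{\pm1}=\langle a\rangle$, whereas a coset $\langle a\rangle\gamma$ with $\gamma$ reduced and not beginning with $a^{\pm1}$ has the four distinct neighbours $\langle a\rangle\gamma a^{\pm1}$ and $\langle a\rangle\gamma b^{\pm1}$, all different from $\langle a\rangle\gamma$. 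As in the proof of \autoref{CayleyFk}, the averaging argument of \autoref{MMR}, applied now to the connected components of the subgraph induced by $M$ (between which there are no edges, so that $\partial M$ is the disjoint union of their boundaries), reduces us to the case that $M$ is connected; and if the interior $\mathring M$ is empty then $c(M)=1\ge\frac23$. So assume $M$ is connected with $n:=|\mathring M|\ge 1$, so that $|M|\ge 2$ and the subgraph $T$ induced on $M$ is a finite tree with no isolated vertex.

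The key estimate is $|\partial M|\ge 2n$. Since $T$ is a finite tree, $\sum_{v\in M}(\deg_T v-2)=-2$. An interior vertex has all of its $T_{rl4}$-neighbours in $M$, hence $\deg_T v=\deg_{T_{rl4}}v$, which is $4$ unless $v=v_0$, in which case it is $2$; therefore $\sum_{v\in\mathring M}(\deg_T v-2)$ equals $2n$ if $v_0\notin\mathring M$ and $2n-2$ if $v_0\in\mathring M$, in either case at least $2n-2$. A boundary vertex $v$ has $1\le\deg_T v$ (as $M$ is connected with $|M|\ge 2$), so $\deg_T v-2\ge -1$. Feeding these two bounds into the degree identity gives $-2\ge (2n-2)-|\partial M|$, i.e.\ $|\partial M|\ge 2n$. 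Hence $|M|=n+|\partial M|\ge 3n$, and since $t\mapsto \frac{t}{n+t}$ is increasing for $t>0$ we obtain \[c(M)=\frac{|\partial M|}{n+|\partial M|}\ \ge\ \frac{2n}{3n}=\frac23.\] Taking the infimum over all finite nonempty $M$ yields $\mathfrak{c}(T_{rl4})\ge\frac23$.

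For the reverse inequality it is enough to test $M=B_1(v_0)$, consisting of $v_0$ and its two neighbours: then $v_0$ is interior and its two neighbours are boundary, so $c(M)=\frac23$, whence $\mathfrak{c}(T_{rl4})\le\frac23$. (Every ball $B_n(v_0)$ serves equally well, since $|B_n(v_0)|=3^n$ and $|\partial B_n(v_0)|=2\cdot 3^{n-1}$.) Combining the two bounds gives $\mathfrak{c}(T_{rl4})=\frac23$, which coincides with the value $\frac{2k-2}{2k-1}$ of the $2k$-regular tree for $k=2$ from \autoref{CayleyFk}. The only genuinely delicate point --- and the step I expect to need the most care --- is the bookkeeping for the exceptional $2$-valent vertex $v_0$ in the degree identity: one must verify that replacing a single $4$-valent vertex of the regular tree by a $2$-valent one only costs the bounded correction term appearing above and hence does not push the Cheeger constant below $\frac23$; everything else is the tree estimate already carried out for \autoref{CayleyFk}.
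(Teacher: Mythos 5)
Your proof is correct. The overall skeleton is the same as the paper's (reduce to connected $M$ via \autoref{MMR}, dispose of the case $\mathring M=\emptyset$, bound $|M|$ from below by $3|\mathring M|$, and use the root together with its two neighbours for the upper bound), but your counting device differs: the paper re-runs the rooted, level-by-level count from the proof of \autoref{CayleyFk}, splitting into the two cases according to whether the looped vertex lies in $\mathring M$ (giving $|M|\geq 1+2+3(n-1)=3n$ when it does, and falling back on the $4$-regular estimate when it does not), whereas you invoke the degree-sum identity $\sum_{v\in M}(\deg_T v-2)=-2$ for the finite tree induced on $M$, which yields $|\partial M|\geq 2n$ in a single computation and absorbs the exceptional $2$-valent vertex $v_0$ uniformly as a correction term of size $2$. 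What your route buys is a self-contained argument that does not depend on the earlier ball-growth estimate and that isolates exactly why one low-degree vertex cannot lower the constant; what the paper's route buys is brevity, since it recycles the computation already done for the $2k$-regular tree. Your additional observation that every ball $B_n(v_0)$ has $c(B_n)=\frac{2\cdot 3^{n-1}}{3^n}=\frac{2}{3}$ is a correct (and slightly stronger) version of the paper's single extremal set; the only cosmetic slip is calling the loop at the root ``two loops'', which is immaterial since loops are discarded before computing the Cheeger constant.
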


\begin{proof}
First we observe that if we set $M\subset V$ as the root and its two neighbors, then $c(M)=\frac{2}{3}$, which implies $\mathfrak{c}(T_{rl4})\leq\frac{2}{3}$.

For the other direction let $M$ be any finite vertex set. If the root is not in $\mathring{M}$, we have seen in the proof of \autoref{CayleyFk} that $c(M)\geq \frac{2}{3}$. If the root is one of $n$ inner vertices of $M$, then using the same arguments as in the proof above $M$ has at least $1+2+3(n-1)$ vertices. Hence $c(M)\geq 1-\frac{n}{1+2+3(n-1)}=\frac{2}{3}$ and we get $\mathfrak{c}(T_{rl4})\geq\frac{2}{3}$, which finishes the proof.
\end{proof}

Later on, we want to choose the generating set $S$ such that it contains special elements. This will be allowed by a result of Woess about metrically equivalent graphs.

\begin{defi}
Let $G=(V,E)$ and $G'=(V',E')$ be graphs with edge metrics $d$ and $d'$, respectively. The graphs $G$ and $G'$ are called \emph{metrically equivalent}, if there exists a surjective map $\varphi: V\twoheadrightarrow V'$ and a constant $A\geq 1$ such that \[\frac{d(v,w)}{A}\leq d'(\varphi(v),\varphi(w))\leq A d(v,w)\] for all $v,w\in V$.
\end{defi}

\begin{prop}
Let $G=G_{\Gamma, \Pi, S}$ and $G'=G'_{\Gamma, \Pi, S'}$ be Cayley or Schreier graphs with respect to finite generating sets $S$ and $S'$ of $\Gamma$ and let $d$ and $d'$ be the edge metrics of $G$ and $G'$, respectively. Then $G$ and $G'$ are metrically equivalent.
\end{prop}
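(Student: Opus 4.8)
The plan is to let $\varphi$ be the identity map on vertices: both $G$ and $G'$ have the same vertex set, the collection of right cosets $\{\Pi\gamma\mid\gamma\in\Gamma\}$, so there is nothing to choose there, and it only remains to exhibit a single constant $A\geq 1$ controlling the distortion in both directions. First I would describe the edge metric combinatorially. After forgetting edge directions, an edge of $G$ joins $\Pi\gamma$ to $\Pi\gamma s$ for $s\in S$, so traversing an edge (in either direction) right-multiplies the coset by an element of $S^\pm=S\sqcup S^{-1}$; hence $d(\Pi\gamma,\Pi\delta)$ is the least $n$ for which $\Pi\delta=\Pi\gamma\,t_1\cdots t_n$ with $t_1,\dots,t_n\in S^\pm$. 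In particular, since $S^\pm$ generates $\Gamma$, the graph $G$ is connected and $d$ is a genuine finite metric, and likewise for $d'$ with $(S')^\pm$.

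Next I would fix the constant. Since $S$ and $S'$ are finite and both $S^\pm$ and $(S')^\pm$ generate $\Gamma$, each element of $(S')^\pm$ is a word of bounded length in $S^\pm$ and conversely; set $A_1:=\max_{t\in (S')^\pm}\ell_{S^\pm}(t)$ and $A_2:=\max_{t\in S^\pm}\ell_{(S')^\pm}(t)$, where $\ell$ denotes word length, and put $A:=\max\{A_1,A_2,1\}$.

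The estimate then follows by direct substitution. Given vertices $v=\Pi\gamma$ and $w=\Pi\delta$ with $d'(v,w)=n$, write $\Pi\delta=\Pi\gamma\,t_1\cdots t_n$ with $t_i\in (S')^\pm$, replace each $t_i$ by a word of length $\leq A_1$ in $S^\pm$, and read off an edge-path in $G$ from $v$ to $w$ of length $\leq A_1 n$; this gives $d(v,w)\leq A_1 d'(v,w)\leq A\,d'(v,w)$, i.e.\ $d(v,w)/A\leq d'(v,w)$. The symmetric argument yields $d'(v,w)\leq A_2 d(v,w)\leq A\,d(v,w)$, so altogether $\frac{d(v,w)}{A}\leq d'(\varphi(v),\varphi(w))\leq A\,d(v,w)$ for all $v,w$, which is exactly metric equivalence.

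This is just the Schreier-graph version of the classical fact that Cayley graphs with respect to different finite generating sets are quasi-isometric, and there is no substantial obstacle: the only point needing a little care is the bookkeeping with edge directions and the left/right action on cosets, and since $S^\pm$ is symmetric the directed and undirected distances coincide, so the whole argument reduces to the routine word-length comparison above.
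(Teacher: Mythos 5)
Your proof is correct, and it follows the same basic strategy as the paper: take $\varphi$ to be the identity on the common vertex set of right cosets and extract a single constant $A$ from the finiteness of the two generating sets. The difference lies in how the constant is chosen, and here your version is actually the more robust one. The paper simply sets $A=\max_{i,j}\bigl(d'(\Pi,\Pi s_i),d(\Pi,\Pi s_j')\bigr)$, i.e.\ it measures each generator by its graph distance from the base coset $\Pi$; you instead take the word length $\ell_{(S')^\pm}(s_i)$ (resp.\ $\ell_{S^\pm}(s_j')$). Since Schreier graphs are not vertex-transitive, $d'(\Pi\gamma,\Pi\gamma s_i)$ can be strictly larger than $d'(\Pi,\Pi s_i)$ (e.g.\ if $s_i\in\Pi$ the latter is $0$ while the former need not be), so the paper's constant does not literally support the edge-by-edge replacement argument, whereas the word-length bound is uniform in $\gamma$: a relation $s_i=t_1\cdots t_m$ with $t_j\in (S')^\pm$ yields a path of length $m$ from $\Pi\gamma$ to $\Pi\gamma s_i$ for \emph{every} coset $\Pi\gamma$. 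Your explicit verification of the two-sided estimate (which the paper omits entirely) is exactly the step that needs this uniformity, so your write-up both completes and slightly repairs the published argument.
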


\begin{proof}
Since $S=\{s_1,\ldots, s_n\}$ and $S'=\{s_1',\ldots s_m'\}$ are finite and $G$ and $G'$ are connected $\max_{i=1,\ldots n; j=1, \ldots m}(d'(\Pi,\Pi s_i),d(\Pi,\Pi s_j'))$ is also finite. Set $A$ to be this number and let $\varphi$ be the identity map on $V(G)=V(G')$.
\end{proof}

\begin{prop}[\cite{Woe}, Theorem 4.7]\label{IndependFromGS}
Let $G$ and $G'$ be connected graphs with bounded vertex degrees. If $G$ and $G'$ are metrically equivalent, then $G$ is amenable if and only if $G'$ is amenable. In particular, for Cayley graphs and Schreier graphs amenability is independent of the choice of a finite generating set.
\end{prop}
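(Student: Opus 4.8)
The plan is to reduce the whole statement to a single per-set estimate. Let $\varphi\colon V\twoheadrightarrow V'$ realize the metric equivalence with constant $A\ge 1$, and let $K$ be the maximal vertex degree of $G$. I first observe that $\varphi$ is automatically injective, hence a bijection: if $\varphi(v)=\varphi(w)$ then $d'(\varphi(v),\varphi(w))=0$, and the lower estimate $d(v,w)/A\le d'(\varphi(v),\varphi(w))$ forces $d(v,w)=0$, i.e.\ $v=w$ (here we use that $G$ is connected, so that $d$ is a genuine metric). The estimate I aim for is: for every finite nonempty $M\subset V$, writing $M'=\varphi(M)$, one has $c(M')\le C\cdot c(M)$ with $C=C(K,A):=1+K+\dots+K^{A}$. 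Granting this, and since every finite subset of $V'$ is of the form $\varphi(M)$ for a finite $M\subset V$, we get $\mathfrak{c}(G')\le C(K,A)\,\mathfrak{c}(G)$; applying the same argument to $\varphi^{-1}$ — a metric equivalence with the same constant $A$ — and to the maximal degree $K'$ of $G'$ gives $\mathfrak{c}(G)\le C(K',A)\,\mathfrak{c}(G')$. Hence $\mathfrak{c}(G)=0$ if and only if $\mathfrak{c}(G')=0$, which is exactly the asserted equivalence of amenability.

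To prove the per-set estimate, note that $|M'|=|M|$ since $\varphi$ is a bijection, so only $|\partial'M'|$ must be bounded. Take $v'\in\partial'M'$ with a $G'$-neighbour $u'\notin M'$ and put $v=\varphi^{-1}(v')\in M$, $u=\varphi^{-1}(u')\notin M$. Since $d'(v',u')=1$, the upper estimate gives $d(v,u)\le A$, so there is a $G$-path $v=x_0,\dots,x_k=u$ with $k\le A$; because $v\in M$ and $u\notin M$, some $x_j$ with $j<k$ satisfies $x_j\in M$, $x_{j+1}\notin M$, so $x_j\in\partial M$ and $d(v,x_j)\le A$. Thus $\varphi^{-1}(\partial'M')$ is contained in the closed $A$-neighbourhood of $\partial M$ in $G$, whose cardinality is at most $|\partial M|\cdot(1+K+\dots+K^{A})$ because $G$ has maximal degree $K$. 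Injectivity of $\varphi$ then yields $|\partial'M'|\le C(K,A)\,|\partial M|$, hence $c(M')\le C(K,A)\,c(M)$. (The degenerate case $\partial M=\emptyset$, which by connectedness forces $M=V$ finite and therefore $G$, $G'$ finite, is trivial.)

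For the ``in particular'' claim it suffices to invoke the preceding proposition: for a fixed group $\Gamma$ and subgroup $\Pi$, the Cayley or Schreier graphs with respect to any two finite generating sets are metrically equivalent, and such graphs have bounded vertex degree (at most $2|S|$), so the main statement applies and their amenability coincides. The part requiring care is not conceptual but the bookkeeping in the boundary estimate — that a $G'$-boundary vertex of $M'$ pulls back along a short $G$-path to a vertex merely \emph{close to} $\partial M$, rather than to a boundary vertex itself, and that ``within distance $A$ of $\partial M$'' turns, via the degree bound, into a purely multiplicative bound on $|\partial'M'|$. Once this $A$-neighbourhood estimate is set up cleanly, everything else is formal.
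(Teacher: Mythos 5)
Your argument is correct, and it is necessarily a different route from the paper, because the paper offers no proof at all: it simply cites Woess (Theorem 4.7). What you supply is the standard self-contained comparison of Cheeger constants under a bilipschitz bijection, and it is sound precisely because the paper's definition of metric equivalence has no additive constant and demands surjectivity, so your observation that $\varphi$ is forced to be a bijection is legitimate and does all the work ($|M'|=|M|$, and pulled-back boundary vertices stay distinct). The per-set estimate is right: a $G'$-boundary vertex of $M'=\varphi(M)$ pulls back to a vertex of $M$ within $G$-distance $A$ of some vertex of $\partial M$, and the degree bound converts this $A$-neighbourhood into the multiplicative bound $|\partial' M'|\leq \bigl(1+K+\dots+K^{\lfloor A\rfloor}\bigr)|\partial M|$, whence $\mathfrak{c}(G')\leq C(K,A)\,\mathfrak{c}(G)$ and, by symmetry of the bilipschitz condition applied to $\varphi^{-1}$, the reverse bound with $K'$; amenability is then equivalent. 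Two cosmetic points: the inequality you invoke to get $d(v,u)\leq A$ from $d'(v',u')=1$ is the left-hand (lower) inequality $\frac{d(v,u)}{A}\leq d'(\varphi(v),\varphi(u))$, not the upper one as you name it; and since $A$ need not be an integer, the exponent in $C(K,A)$ should be $\lfloor A\rfloor$ (path lengths are integers), which changes nothing. Your handling of the degenerate case $\partial M=\emptyset$ and of the ``in particular'' clause via the paper's preceding proposition (any two finite generating sets give metrically equivalent, bounded-degree Cayley or Schreier graphs) matches exactly how the paper intends the statement to be used. Note that your proof is tied to the paper's strong notion of metric equivalence; Woess's theorem is stated for rough isometries with additive constants, where $\varphi$ need not be injective and one must work with quasi-inverses and bounded preimage multiplicities, so your argument is more elementary but correspondingly less general.
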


\subsubsection{Free Subgroups}
Keeping in mind the equivalence between amenability of groups (cf.\@ e.g.\@ \cite{Amen}) and amenability of Cayley graphs (Proposition 12.4 in \cite{Woe}), as well as the similarities of Cayley graphs and Schreier graphs one could hope that the subgroup criterion for non-amenability of a group survives to Schreier graphs. This states that a group is non-amenable if it contains a non-amenable -- e.g. a free non-abelian -- subgroup. Translated to Schreier graphs where the subgroup $\Pi\triangleleft\Gamma$ is a normal subgroup, this means that if there exists a free non-abelian subgroup $\digamma<\Gamma$ having trivial intersection with $\Pi$, the Schreier graph -- which in this case is indeed the Cayley graph of the factor group -- is non-amenable. But unfortunately this translation is no longer true, if we drop the condition that $\Pi$ is normal in $\Gamma$, as we see in the following counterexample.

\begin{exa}
Let $\Gamma=\left<a,b \,|\, - \right>$ be the free group in two generators and $\Pi$ the subgroup $\left<\{a^k b a^{-k}\}_{k\in \mathbb{Z} - \{0\}}\right>$. Then the Schreier graph $G_{\Gamma,\Pi,\{a,b\}}$ is amenable, but there is a free non-abelian subgroup $\digamma$ having trivial intersection with $\Pi$.
\end{exa}

\begin{proof}
All cosets $\Pi a^k$ are different, because \[\Pi a^m=\Pi a^n \Leftrightarrow \Pi a^{m-n}=\Pi \Leftrightarrow a^{m-n} \in \Pi\Leftrightarrow m=n.\] From $\Pi a^k$ there is an outgoing edge labeled with $a$ to $\Pi a^{k+1}$ and an incoming edge from $\Pi a^{k-1}$. Since $\Pi a^kb=\Pi a^k \Leftrightarrow a^kba^{-k}\in \Pi$ the $b$-edges from $\Pi a^k$ are loops at $\Pi a^k$ for all $k\in \mathbb{Z} - \{0\}$. Summing up, this part of the coset graph looks like the Cayley graph of $\mathbb{Z}$ with an additional loop at almost every vertex. Only at the origin, the vertex $\Pi=\Pi 1$, there are an incoming and an outgoing $b$-edge, connecting this part to the following part:

All cosets $\Pi w$ and $\Pi w'$, with $w$ and $w'$ different reduced words starting with $b^{\pm 1}$, are different vertices since $\Pi w=\Pi w' \Leftrightarrow ww'^{-1}\in \Pi$, and this is not true since every (non-trivial) element in the subgroup $\Pi$ begins and ends with $a$ or $a^{-1}$. Furthermore, no $\Pi w$ for $w$ starting with $b^{\pm 1}$ is adjacent to one of the $\Pi a^k$ ($k\neq 0$), because of the $4$-regularity of the Schreier graph. So the Schreier graph may be pictured as in \autoref{counterexamplePicture}.

\begin{figure}[h]\label{counterexamplePicture}
\begin{center}
\includegraphics[scale=0.8]{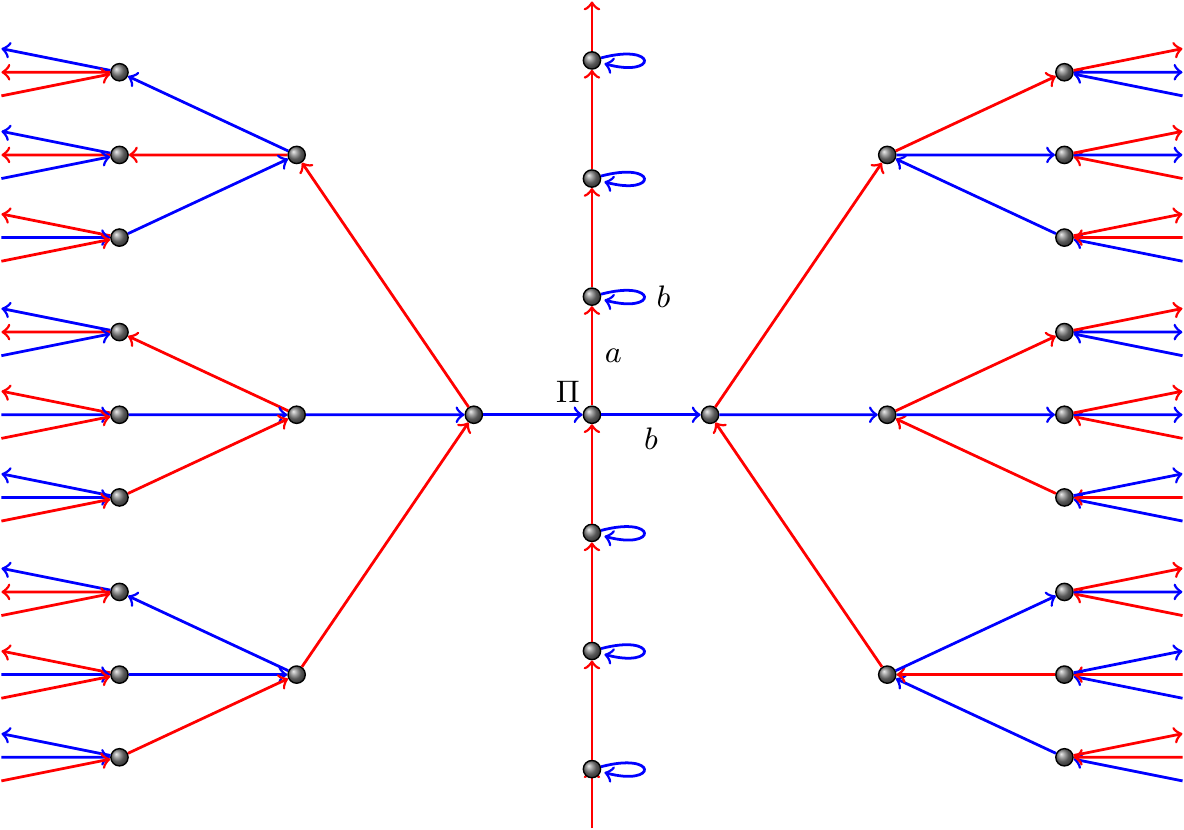}
\end{center}
\caption{Schreier graph $G_{\Gamma,\Pi,S}$: amenable although $\Gamma$ contains a free subgroup intersecting $\Pi$ only trivially.}
\end{figure}

Now we can give a sequence of finite vertex sets with the property that the quotient of boundary vertices to all vertices of one set tends to zero: \[(M_i)_{i\in \mathbb{N}} \textnormal{ with } M_i=\{\Pi a^k \colon |k|\leq i \}.\]
In $M_i$ there are three boundary vertices $\Pi$, $\Pi a^i$ and $\Pi a^{-i}$. On the other hand, we have $|M_i|=2i+1$, so $c(M)=\frac{3}{2i+1} \rightarrow 0$ for $i\rightarrow \infty$.

It remains to find a non-abelian free subgroup $\digamma<\Gamma$ that intersects with $\Pi$ only trivially. One can easily check that the subgroup $\digamma=\left<bab,b^2\right>$ is such a group: As a subgroup of a free group it is free, obviously of rank $>1$, and every nontrivial word in $\digamma$ begins with $b^{\pm 1}$ and thus is not in $\Pi$.
\end{proof}

\subsection{The Combinatorial Laplacian and the Cheeger Constant}
In \autoref{critexp} we will need some information on the spectrum of a graph $G$ with infinite vertex set, in particular on its bottom $\mu_0(G)$. This is why we provide some basic facts about the combinatorial Laplacian in this section.

Let $G=(V,E)$ be a connected graph with maximal valency $k>0$. 
We define the \emph{gradient} $\nabla$ as an operator sending a map $b:V\rightarrow \RR$ to the map \[\nabla b: E\rightarrow \RR, \quad  (i,j)\mapsto b(i)-b(j)\] and the \emph{combinatorial Laplacian} $\LapC$ as \[\LapC b: V\rightarrow \RR, \quad i\mapsto \sum_{j : \{i,j\}\in E}b(i)-b(j)\] for all maps $b$, which are in $\mathfrak{l}^2(V)$, i.e.\ the maps with $\langle b,b\rangle\coloneqq \sum_i b(i)^2<\infty$. The inner product of $a,b: V\rightarrow \RR$ is defined as $\langle a,b \rangle\coloneqq \sum_{i\in V}a(i)b(i)$. For functions $c,d:E\rightarrow \RR$ we use the same notation of inner product (defined as $\langle c,d \rangle\coloneqq \sum_{e\in E}c(e)d(e)$) and norm $||c||$. The combinatorial Laplacian is a bounded operator self-adjoint with respect to the quadratic form $q(b)=\sum_{\{i,j\}\in E}(b(i)-b(j))^2=\langle \LapC b,b\rangle$.
If $G$ has a finite vertex set $V$, the constant maps are eigenfunctions with eigenvalue $\lambda_0(G)=0$. For connected graphs the eigenvalue $0$ has multiplicity $1$ and the difference $\mu_0(G)=\lambda_1(G)-\lambda_0(G)$ is called \emph{spectral gap}. If $V$ is infinite, we set the \emph{spectral gap} $\mu_0(G)$ to be the smallest eigenvalue $\mu_0(G)$ of $\LapC$.

Let us now assume that $G$ has infinitely many vertices. By the Min-Max principle the spectral gap satisfies \begin{equation}
\mu_0(G)=\inf_b\frac{\sum_{\{i,j\}\in E}(b(i)-b(j))^2}{\sum_{i\in V}b(i)^2}=\inf_b\frac{||\nabla b||^2}{||b||^2}.\label{muNull}
\end{equation}

There are upper and lower bounds on $\mu_0(G)$ in terms of the Cheeger constant $\mathfrak{c}(G)$:
\begin{prop}[Cheeger]\label{prop: CheegerSpecGap}
Let $G=(V,E)$ be a connected graph with $|V|=\infty$ and maximal vertex valency $k$. Then the following inequalities for the spectral gap $\mu_0(G)$ hold: \[\frac{\mathfrak{c}(G)^2}{2k}\leq \mu_0(G) \leq k \mathfrak{c}(G).\]
\end{prop}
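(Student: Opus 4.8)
The plan is to prove the two inequalities separately, both by testing the variational characterization \eqref{muNull} of $\mu_0(G)$ against suitable functions, and in the upper-bound direction by building test functions out of near-minimizers for the Cheeger constant, while in the lower-bound direction by a co-area / Cauchy--Schwarz argument applied to an arbitrary $\mathfrak{l}^2$ function. Throughout I will use that $G$ is connected with infinitely many vertices and maximal valency $k$, so that $\mathfrak{l}^2(V)$ contains no nonzero constant and the infimum in \eqref{muNull} is a genuine spectral gap.

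\textbf{Upper bound $\mu_0(G)\le k\,\mathfrak{c}(G)$.} Here I would, for every finite nonempty $M\subset V$, plug the indicator function $b=\mathbf{1}_M$ into \eqref{muNull}. Then $\|b\|^2=|M|$, and $(b(i)-b(j))^2$ is $1$ exactly on edges with one endpoint in $M$ and one outside, and $0$ otherwise; the number of such edges is at most $k\,|\partial M|$ since each boundary vertex of $M$ has at most $k$ incident edges. Hence $\|\nabla b\|^2\le k\,|\partial M|$ and \eqref{muNull} gives $\mu_0(G)\le k\,|\partial M|/|M|=k\,c(M)$. Taking the infimum over finite $M$ yields $\mu_0(G)\le k\,\mathfrak{c}(G)$. (This direction is essentially routine.)

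\textbf{Lower bound $\mathfrak{c}(G)^2/(2k)\le \mu_0(G)$.} This is the harder half and is the ``discrete Cheeger inequality''. Given any finitely supported $b:V\to\RR$ with $b\ge 0$ (by replacing $b$ with $|b|$, which does not increase $\|\nabla b\|^2$ and does not change $\|b\|^2$, it suffices to treat $b\ge 0$; a density/truncation argument reduces the general $\mathfrak{l}^2$ case to the finitely supported one), apply the co-area formula to the function $b^2$: writing $\Omega_t=\{i:b(i)^2>t\}$ for $t\ge 0$, one has $\sum_{i}b(i)^2=\int_0^\infty|\Omega_t|\,dt$ and, for the ``perimeter'', $\sum_{\{i,j\}\in E}|b(i)^2-b(j)^2|=\int_0^\infty|\partial' \Omega_t|\,dt$ where $|\partial'\Omega_t|$ counts edges crossing the boundary of $\Omega_t$ (which is at least $|\partial\Omega_t|$). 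Since each $\Omega_t$ is finite, $|\partial'\Omega_t|\ge|\partial\Omega_t|\ge\mathfrak{c}(G)\,|\Omega_t|$, so $\sum_{\{i,j\}\in E}|b(i)^2-b(j)^2|\ge\mathfrak{c}(G)\sum_i b(i)^2$. Now estimate the left-hand side by Cauchy--Schwarz: $|b(i)^2-b(j)^2|=|b(i)-b(j)|\,(b(i)+b(j))$, so
\[
\sum_{\{i,j\}\in E}|b(i)^2-b(j)^2|\ \le\ \Bigl(\sum_{\{i,j\}\in E}(b(i)-b(j))^2\Bigr)^{1/2}\Bigl(\sum_{\{i,j\}\in E}(b(i)+b(j))^2\Bigr)^{1/2}.
\]
For the second factor, $(b(i)+b(j))^2\le 2b(i)^2+2b(j)^2$, and summing over edges each vertex $i$ is counted at most $k$ times, so $\sum_{\{i,j\}\in E}(b(i)+b(j))^2\le 2k\sum_i b(i)^2$. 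Combining, $\mathfrak{c}(G)\,\|b\|^2\le\|\nabla b\|\cdot\sqrt{2k}\,\|b\|$, i.e. $\|\nabla b\|^2/\|b\|^2\ge\mathfrak{c}(G)^2/(2k)$. Taking the infimum over $b$ and invoking \eqref{muNull} gives the claim.

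\textbf{Main obstacle.} The delicate points are the reductions surrounding the lower bound: justifying that it suffices to test \eqref{muNull} on finitely supported nonnegative functions (truncating an $\mathfrak{l}^2$ function and passing to the limit without increasing the Rayleigh quotient), and checking the co-area identity on a locally finite graph together with the inequality $|\partial'\Omega_t|\ge\mathfrak{c}(G)|\Omega_t|$ (which needs each super-level set $\Omega_t$ to be finite, guaranteed because $b\in\mathfrak{l}^2$). Everything else is bookkeeping with the bounded valency $k$.
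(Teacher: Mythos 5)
Your lower-bound argument is essentially the paper's own proof: the paper bounds the same quantity $\sum_{\{i,j\}\in E}|b(i)^2-b(j)^2|$ from above by $\sqrt{2k}\,||\nabla b||\cdot||b||$ via Cauchy--Schwarz and from below by $\mathfrak{c}(G)||b||^2$ via a sum over sorted super-level sets $A_l$, which is exactly your co-area decomposition in discrete form, and both rest on the variational formula \eqref{muNull} for finitely supported test functions. The only difference is that you also prove the upper bound $\mu_0(G)\leq k\,\mathfrak{c}(G)$ by testing indicator functions, which the paper explicitly omits since it only needs the first inequality; your version of that step is correct.
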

A proof of a similar theorem can be found in \cite{CdV}, Section $2$. But note that he defines the Cheeger constant via the edge boundary $\partial_E A$, which consists of all edges that connect vertices of $A$ and $A^c$. We will just use the first inequality and thus also prove just this one:

\begin{proof}
Let $b:V\rightarrow \RR$ have finite support. We define \[\mathcal{S}\coloneqq \sum_{\{i,j\}\in E}|b^2(i)-b^2(j)|\] and by the Cauchy-Schwartz inequality for $\langle \nabla b, b(i)+b(j) \rangle$ combined with the estimate $\sum_{\{i,j\}\in E}|b(i)|+|b(j)|\leq 2k \sum_{i\in V}|b(i)|$ we obtain \[\mathcal{S}\leq \sqrt{2k}||\nabla b|| \cdot ||b||.\]

On the other hand $\mathcal{S}=\sum b^2(i)-b^2(j)$, where the sum is taken over all (oriented) edges $(i,j)$ with $b^2(i)\geq b^2(j)$. The image of $b$ is finite and we sort the values of $b^2$ obtaining $a_0=0<a_1<\ldots < a_r$ and define \[A_l\coloneqq \{i\in V \mid b^2(i)\geq a_l\}.\]
Then we can write the sum $\mathcal{S}$ as $\sum (a_l-a_{l-1})$, where each summand $a_l-a_{l-1}$ appears with multiplicity equal to the number of edges $(i,j)$ with $b^2(i)\geq a_l$ and $b^2(j)<a_l$. This number is bounded below by $|\partial A_{l}|$. With the definition of the Cheeger constant $\mathfrak{c}(G)<\frac{|\partial A_{l}|}{|A_{l}|}$ and thus \[\mathcal{S}\geq \mathfrak{c}(G)\sum_{l=1}^r(a_l-a_{l-1})|A_l|=\mathfrak{c}(G)||b||^2\] follows. All together we finish the proof by obtaining \[\frac{||\nabla b||^2}{||b||^2}\geq \frac{\mathfrak{c}(G)^2}{2k}.\] 
\end{proof} 

Thus for $G$ having a strictly positive spectral gap $\mu_0(G)$ is equivalent to having a strictly positive Cheeger constant $\mathfrak{c}(G)$ and hence also to being non-amenable.


\section{Critical Exponent and Graph-Periodic Manifolds}\label{critexp}
In \autoref{Sizes} we already saw Veech groups of different sizes. Another way to measure the size of Fuchsian groups is the \emph{critical exponent} which is introduced in the first part of this section. The aim of the second part is to build a bridge from specific Fuchsian groups -- subgroups of lattices -- and their critical exponent to amenability of Schreier graphs (described in \autoref{sect:Schreier}). We use a concept introduced by Tapie (\cite{TapFr}): \emph{graph-periodic manifolds} $M$ over a cell $C$, i.e. manifolds consisting of isometric copies of another manifold glued together according to the structure given by a graph. Following \cite{RobTap} we compare the bottom of the spectrum of $C$ and of $M=\quotient{\HH}{\Pi}$. In doing so we prove that the critical exponent of $\Pi<\Gamma$ is strictly smaller than $1$, if the Schreier graph $G_{\Gamma,\Pi,S}$ is non-amenable for a finite set $S$ generating $\Gamma$. 

Further background on the critical exponent viewed from different perspectives can be found in \cite{Nic}.

\subsection{The Critical Exponent}
First we define the term critical exponent and collect some basic properties. Let $\rho_\HH$ be the hyperbolic metric on the upper half plane $\HH$.

\begin{defi}
Let $\Gamma$ be a Fuchsian group and $\ast\in \HH$. The \emph{Poincaré series} to the exponent $a\in\RR$ and the base point $\ast$ is the series $\sum_{\gamma\in \Gamma} e^{-a\rho_\HH(\ast,\gamma(\ast))}$. The infimum of exponents $a$, for which the Poincaré series converges is called the \emph{critical exponent} $\delta(\Gamma)$: \[\delta(\Gamma)\coloneqq \inf \bigl\{a\in \mathbb{R} \mid \sum_{\gamma\in \Gamma} e^{-a\rho_\HH(\ast,\gamma(\ast))}<\infty\bigr\}\]
\end{defi}

Because of the triangle inequality the convergence of the Poincaré series and thus also the critical exponent are independent of the choice of base point. Usually we will set $\ast=i$. Our next observation concerns the critical exponent of commensurable groups. Recall that two subgroups $\Gamma$ and $\Gamma'$ of $\SL_2(\RR)$ are called commensurable if there exist subgroups $\Pi<\Gamma$ and $\Pi'<\Gamma'$ of finite index each, that are conjugate in $\SL_2(\RR)$.

\begin{prop}\label{prop: commenssameCE}
Commensurable Fuchsian groups have the same critical exponent.
\end{prop}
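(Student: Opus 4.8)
The plan is to reduce the statement to two sub-claims: first, that two \emph{commensurate} Fuchsian groups (sharing a common finite-index subgroup) have the same critical exponent, and second, that conjugation by an element of $\SL_2(\RR)$ preserves the critical exponent. Together these give the result for commensurable groups, since if $\Gamma$ and $\Gamma'$ are commensurable there are finite-index subgroups $\Pi<\Gamma$ and $\Pi'<\Gamma'$ with $\Pi'=g\Pi g\inv$; then $\delta(\Gamma)=\delta(\Pi)$ by the first claim, $\delta(\Pi)=\delta(\Pi')$ by the second, and $\delta(\Pi')=\delta(\Gamma')$ by the first claim again.

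For the conjugation invariance: if $\Pi'=g\Pi g\inv$, then $\rho_\HH(g(i),\gamma(g(i)))=\rho_\HH(i, g\inv\gamma g(i))$ for $\gamma\in\Pi'$ because $g$ acts by isometries, so the Poincaré series of $\Pi'$ with base point $g(i)$ equals the Poincaré series of $\Pi$ with base point $i$ term by term; since the critical exponent is independent of the base point (as already noted in the excerpt via the triangle inequality), $\delta(\Pi')=\delta(\Pi)$.

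The substantive part is the first sub-claim: if $\Pi<\Gamma$ has finite index $n$, then $\delta(\Pi)=\delta(\Gamma)$. The inequality $\delta(\Pi)\leq\delta(\Gamma)$ is immediate since $\Pi\subset\Gamma$ makes the Poincaré series of $\Pi$ a subseries of that of $\Gamma$. For the reverse, write $\Gamma$ as a disjoint union of cosets $\Gamma=\bigsqcup_{j=1}^n \Pi\xi_j$. Then for any $a$,
\[
\sum_{\gamma\in\Gamma} e^{-a\rho_\HH(i,\gamma(i))} = \sum_{j=1}^n \sum_{\pi\in\Pi} e^{-a\rho_\HH(i,\pi\xi_j(i))}.
\]
Using the triangle inequality $\rho_\HH(i,\pi\xi_j(i))\geq \rho_\HH(i,\pi(i)) - \rho_\HH(i,\xi_j(i))\geq \rho_\HH(i,\pi(i)) - R$ where $R\coloneqq\max_j \rho_\HH(i,\xi_j(i))$ is finite, each inner sum (for $a>0$) is bounded above by $e^{aR}\sum_{\pi\in\Pi}e^{-a\rho_\HH(i,\pi(i))}$, so the whole series is at most $n\,e^{aR}$ times the Poincaré series of $\Pi$. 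Hence convergence of the $\Pi$-series at exponent $a$ forces convergence of the $\Gamma$-series, giving $\delta(\Gamma)\leq\delta(\Pi)$.

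I expect no real obstacle here; the main point to be careful about is the bookkeeping with left versus right cosets and with the asymmetry of the triangle-inequality bound (one needs coset representatives on the correct side so that $\pi\xi_j(i)$ stays within bounded distance of $\pi(i)$), but this is routine and is handled exactly as above by choosing right coset representatives $\xi_j$ and noting $\rho_\HH(\pi(i),\pi\xi_j(i))=\rho_\HH(i,\xi_j(i))\leq R$ since $\pi$ is an isometry.
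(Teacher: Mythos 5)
Your proof is correct and follows essentially the same route as the paper: conjugation invariance via a change of base point, plus invariance under passage to a finite-index subgroup, where your coset decomposition with the $e^{aR}$ bound simply makes explicit the ``rearrangement of the summands'' that the paper's proof only sketches.
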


\begin{proof}
We prove this proposition in two steps. First, conjugate Fuchsian groups have the same critical exponent since conjugation just corresponds to a change of the base point in the Poincaré series. Second, descending to a subgroup $\Pi$ of finite index in a group $\Gamma$ also does not change the critical exponent: the inequality $\delta(\Pi)\leq \delta(\Gamma)$ is clear by definition. The reverse inequality is obtained by a rearrangement of the summands of the Poincaré series.
\end{proof}

Obviously the critical exponent of infinite groups is at least $0$. By direct computations one can check some further general bounds on the critical exponent of Fuchsian groups $\Gamma$: cyclic groups generated by a hyperbolic element have critical exponent $0$, cyclic parabolic groups have critical exponent $\frac{1}{2}$. Since the critical exponent of subgroups is less or equal to the critical exponent of the big group, every group containing a parabolic element has critical exponent at least $\frac{1}{2}$.

For non-elementary groups Beardon and later Paterson proved a better bound:
\begin{prop}[\cite{Bea}/\cite{PatExp}]\label{thm:BeaPat}
If $\Gamma$ is non-elementary and contains a parabolic element, then $\delta(\Gamma)> \frac{1}{2}$.
\end{prop}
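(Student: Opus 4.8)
The final statement to prove is \autoref{thm:BeaPat}: if $\Gamma$ is non-elementary and contains a parabolic element, then $\delta(\Gamma) > \frac{1}{2}$.

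The plan is to give a quantitative counting argument, following Beardon and Patterson. The rough idea is that a non-elementary group containing a parabolic element $p$ has, sitting inside it, a "large" supply of elements coming from combining $p$ with a hyperbolic (or a second parabolic with distinct fixed point) element $h$ — namely a free semigroup or free group on suitable powers of $p$ and conjugates of $p$ by $h$ — and this supply is large enough to force the Poincaré series to diverge at $a = \frac 12$.

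Here is how I would carry it out. First, normalise: after conjugating (which by \autoref{prop: commenssameCE}-style reasoning does not change $\delta$) we may assume the parabolic element fixes $\infty$, so $p(z) = z + \lambda$ for some $\lambda \neq 0$; passing to $p^n$ if necessary we may take $\lambda$ as large as we like, or simply $\lambda = 1$. Since $\Gamma$ is non-elementary it contains an element $g$ not fixing $\infty$; then $q = g p g^{-1}$ is a parabolic element fixing $g(\infty) \neq \infty$. The key step is a ping-pong / Schottky argument: for a sufficiently high power $N$, the elements $A = p^N$ and $B = q^N$ generate a free group of rank $2$, and moreover one controls $\rho_\HH(i, w(i))$ for reduced words $w$ in $A, B$ by the word length — specifically $\rho_\HH(i, w(i)) \le C \cdot |w|$ for a constant $C$ depending only on $N$, because each generator moves $i$ a bounded hyperbolic distance and the geodesic segments concatenate with bounded backtracking. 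Then
\[
\sum_{\gamma \in \Gamma} e^{-\frac12 \rho_\HH(i, \gamma(i))} \ge \sum_{w} e^{-\frac{C}{2}|w|} \ge \sum_{n=1}^\infty (\text{number of reduced words of length } n) \cdot e^{-\frac{C}{2}n},
\]
and the number of reduced words of length $n$ in a rank-$2$ free group grows like $3^n$, so the series diverges as long as the exponential decay rate $e^{-C/2}$ is beaten by $3$. This by itself is not automatic — one must be cleverer.

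The refinement that actually works (this is the heart of Beardon's argument) is to not use a free group on two parabolics but to exploit the parabolic's \emph{slow} distance growth: the orbit points $p^k(i) = i + k$ satisfy $\rho_\HH(i, p^k(i)) = 2\log|k| + O(1)$, so $e^{-\frac12 \rho_\HH(i, p^k(i))} \asymp |k|^{-1}$ and $\sum_k |k|^{-1}$ just barely diverges — it is logarithmically divergent. So a cyclic parabolic group sits exactly at the critical value $\frac12$. To push strictly above $\frac12$, I would build a larger family: take the parabolic $p$ fixing $\infty$ and a second element $h \in \Gamma$ with $h(\infty) \neq \infty$, and consider the double family $\{ p^{k} h p^{\ell} : k, \ell \in \mathbb{Z}\}$ (or $h p^k h^{-1} p^\ell$), checking these are distinct group elements for a suitable infinite index set, and estimating $\rho_\HH(i, p^k h p^\ell(i)) \le \rho_\HH(i, p^k(i)) + \rho_\HH(i, h(i)) + \rho_\HH(i, p^\ell(i)) = 2\log|k| + 2\log|\ell| + O(1)$. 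Then the sub-sum of the Poincaré series over this family is $\gtrsim \sum_{k,\ell} \frac{1}{|k|\,|\ell|} \cdot (\text{something})$ — which diverges at exponent $\frac12$, but more importantly a careful version shows the half-plane of convergence is open, i.e. there is no convergence at $\frac12$ itself, forcing $\delta(\Gamma) \ge \frac12$ with the strict inequality coming from the genuinely two-dimensional (rather than one-dimensional) nature of the orbit growth: one shows $\sum_{\gamma} e^{-a\rho(i,\gamma i)} = \infty$ for all $a \le \frac12$ by comparing with $\sum_{n} N(n) e^{-2an}$ where $N(n) = \#\{\gamma : \rho(i,\gamma i) \le n\}$ grows at least like $e^{n}$ (exponentially, strictly faster than the $e^{n/2}\cdot(\text{poly})$ of an elementary parabolic group) once a second generator is present.

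The main obstacle, and the step requiring the most care, is precisely establishing the \emph{exponential} orbit-growth lower bound $N(n) \gtrsim e^{c n}$ with $c > 1$ — equivalently, producing enough distinct group elements $\gamma$ with $\rho_\HH(i,\gamma(i)) \le n$. The naive free-semigroup-on-two-parabolics bound gives growth rate $3$ but with a distance distortion constant $C$ that might make $C/2 \ge \log 3$, killing the argument; the fix is to interleave \emph{long runs} of the same parabolic generator (cheap in distance, $\log$ rather than linear) between switches, which is exactly why the combinatorics of "words with long constant blocks" enters. I would set up a counting of such structured words, show their orbit points are distinct (a ping-pong statement about the parabolic fixed points being separated, which uses non-elementarity in an essential way), bound their distances by $\sum (\text{block} \mapsto 2\log(\text{block length})) + O(\#\text{blocks})$, and then optimise the resulting series to see it diverges for every $a \le \frac12$. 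Since the excerpt only asks us to cite the result and attributes it to \cite{Bea}/\cite{PatExp}, in the paper itself I would likely just invoke it; but the above is the route I would reconstruct if a self-contained proof were wanted.
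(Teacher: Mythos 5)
You should first note that the paper itself never proves this proposition: it is imported from Beardon and Patterson as a black box and merely invoked in the proof of \autoref{mainprop}, so your closing remark that one would ``just invoke it'' is exactly what the paper does. Judged as a reconstruction of the cited proof, however, your blueprint has a genuine gap at the decisive step. Every quantitative target you set up is calibrated to show divergence of the Poincar\'e series for exponents $a\le\frac{1}{2}$ (``optimise the resulting series to see it diverges for every $a\le\frac{1}{2}$''). Divergence at $\frac{1}{2}$ only yields $\delta(\Gamma)\ge\frac{1}{2}$, which already follows from the single cyclic parabolic subgroup and is the non-strict bound the paper records separately; strictness requires divergence at some exponent \emph{strictly larger} than $\frac{1}{2}$. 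Your alternative claim, an orbit-count bound $N(n)\gtrsim e^{cn}$ with $c>1$ (or ``at least like $e^{n}$''), cannot be the route either: for any Fuchsian group $N(n)=O(e^{n})$, since orbit points are uniformly separated inside a ball of area $\asymp e^{n}$, and growth of order $e^{n}$ would force $\delta=1$ --- contradicting the main theorem of this very paper, which exhibits non-elementary groups with parabolics and $\delta<1$.

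The missing idea is a super-multiplicativity argument that manufactures divergence slightly \emph{above} $\frac{1}{2}$. Normalise so the parabolic $p$ fixes $\infty$ and pick $g\in\Gamma$ with $g(\infty)\neq\infty$. Consider the words $\gamma=p^{m_1}g\,p^{m_2}g\cdots p^{m_k}g$, which are pairwise distinct once the $m_j$ are restricted to a suitable range $|m_j|\ge M$ (a ping-pong statement; this is where non-elementarity and discreteness enter, and the restriction is harmless below). Using $\cosh\rho_\HH(i,\gamma i)=\tfrac{1}{2}\|\gamma\|^{2}$, hence $e^{-a\rho_\HH(i,\gamma i)}\ge\|\gamma\|^{-2a}$, together with submultiplicativity of the norm and $\|p^{m}g\|\le C_g(|m|+1)$, one gets for each $k$ that the sub-sum of the Poincar\'e series over these words is at least $\bigl(\sum_{|m|\ge M}(C_g(|m|+1))^{-2a}\bigr)^{k}$. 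The inner sum blows up like $\zeta(2a)$ as $a\downarrow\frac{1}{2}$, so for some $a>\frac{1}{2}$ it exceeds $1$; summing over $k$ the Poincar\'e series diverges at that $a$, whence $\delta(\Gamma)\ge a>\frac{1}{2}$. Your ``long constant blocks'' heuristic points in this direction (cheap logarithmic cost per parabolic block against a fixed cost per switch), but without the switch from ``divergence at $\frac{1}{2}$'' to ``divergence at some $a>\frac{1}{2}$'' --- i.e.\ without playing the blow-up of $\sum_m|m|^{-2a}$ against a fixed per-block constant --- the argument as written does not deliver the strict inequality.
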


We cite two more theorems about critical exponents of Fuchsian groups:

\begin{prop}[\cite{Nic}, Theorem 1.6.1]\label{thm:Ni1}
For all Fuchsian groups $\Gamma$ the critical exponent is at most $1$.
\end{prop}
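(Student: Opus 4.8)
The plan is to bound the Poincaré series by an orbit-counting argument, using that the number of orbit points inside a hyperbolic ball of radius $R$ grows at most exponentially with ratio $e$.

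First I would reduce the sum over group elements to a sum over orbit points. The map $\gamma\mapsto\gamma(\ast)$ is a surjection from $\Gamma$ onto the orbit $\Gamma\ast$ whose fibres are the cosets of $\Stab_\Gamma(\ast)$; since $\Gamma$ is discrete and the full stabilizer of $\ast$ in $\SL_2(\RR)$ is compact (conjugate to $\mathrm{SO}(2)$), the group $\Stab_\Gamma(\ast)$ is finite. Hence, writing $N:=|\Stab_\Gamma(\ast)|$,
\[
  \sum_{\gamma\in\Gamma} e^{-a\rho_\HH(\ast,\gamma(\ast))} \;=\; N\sum_{p\in\Gamma\ast} e^{-a\rho_\HH(\ast,p)},
\]
so it suffices to show that the right-hand sum converges for every $a>1$.

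Next comes the key estimate: $\#\{\,p\in\Gamma\ast : \rho_\HH(\ast,p)\le R\,\}\le c\,e^{R}$ for a constant $c$ independent of $R$. Because $\Gamma$ acts properly discontinuously on $\HH$, the orbit $\Gamma\ast$ is a discrete subset of $\HH$, so $d_0:=\inf\{\rho_\HH(\ast,p):p\in\Gamma\ast,\ p\neq\ast\}$ is strictly positive. The open balls $B(p,d_0/2)$ for $p\in\Gamma\ast$ are then pairwise disjoint, and each has hyperbolic area $a_0:=2\pi(\cosh(d_0/2)-1)>0$. If $\rho_\HH(\ast,p)\le R$ then $B(p,d_0/2)\subset B(\ast,R+d_0/2)$, and the latter ball has area $2\pi(\cosh(R+d_0/2)-1)\le c\,a_0\,e^{R}$ for a suitable $c$. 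Comparing areas yields the claimed bound.

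Finally I would split the orbit into the shells $S_n:=\{p\in\Gamma\ast : n\le\rho_\HH(\ast,p)<n+1\}$, $n\ge 0$. By the counting estimate $|S_n|\le c\,e^{\,n+1}$, so for $a>1$
\[
  \sum_{p\in\Gamma\ast} e^{-a\rho_\HH(\ast,p)}
  \;=\;\sum_{n\ge 0}\ \sum_{p\in S_n} e^{-a\rho_\HH(\ast,p)}
  \;\le\; \sum_{n\ge 0} |S_n|\,e^{-an}
  \;\le\; c\,e\sum_{n\ge 0} e^{(1-a)n}\;<\;\infty .
\]
Thus the Poincaré series converges for every $a>1$, which gives $\delta(\Gamma)\le 1$. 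The only real work is the exponential orbit-counting bound; the reduction to orbit points and the shell decomposition are routine, and nothing beyond proper discontinuity and the volume growth $2\pi(\cosh R-1)$ of hyperbolic balls is needed. (Alternatively one could drop the finite factor $N$ by invoking base-point independence of $\delta$ to move $\ast$ off the discrete set of elliptic fixed points, so that $\gamma\mapsto\gamma(\ast)$ becomes a bijection.)
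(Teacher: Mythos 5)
Your argument is correct: the reduction to orbit points via the finite stabilizer, the disjoint-balls volume comparison giving the $ce^{R}$ orbit-counting bound (note only that distinct orbit points are at mutual distance $\geq d_0$ by invariance of the metric, which is what makes the balls disjoint), and the shell summation for $a>1$ together give $\delta(\Gamma)\leq 1$. The paper itself offers no proof but simply cites Theorem 1.6.1 of Nicholls, and the proof there is essentially this standard volume-growth/orbit-counting argument, so your proposal matches the intended route.
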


\begin{prop}[\cite{Nic}, Theorem 1.6.3]\label{thm:Ni2}
If $\Gamma$ is a lattice, then $\delta(\Gamma)=1$.
\end{prop}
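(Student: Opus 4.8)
The plan is to take the bound $\delta(\Gamma)\le 1$ for free from \autoref{thm:Ni1} and concentrate entirely on the reverse inequality $\delta(\Gamma)\ge 1$. Fix a base point $\ast\in\HH$ — the critical exponent does not depend on it — and put $N(R):=\#\{\gamma\in\Gamma:\rho_\HH(\ast,\gamma(\ast))\le R\}$, which is finite for each $R$ because $\Gamma$ acts properly discontinuously. Grouping the terms of the Poincaré series according to the integer parts of the distances and summing by parts gives, for every $a>0$,
\[\sum_{\gamma\in\Gamma}e^{-a\rho_\HH(\ast,\gamma(\ast))}\ \ge\ \bigl(1-e^{-a}\bigr)\sum_{n\ge1}N(n)\,e^{-an}.\]
So it suffices to show that $N(R)$ grows at least like $e^{(1-\varepsilon)R}$ along some sequence $R\to\infty$, for every $\varepsilon>0$: then the right-hand sum diverges for every $a<1$, so no $a<1$ lies in the domain of convergence and $\delta(\Gamma)\ge1$. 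Using Selberg's lemma together with \autoref{prop: commenssameCE} I would also pass to a torsion-free finite-index subgroup, so that $M:=\quotient{\HH}{\Gamma}$ is a complete hyperbolic surface of finite area $A$.

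If $\Gamma$ is cocompact this is a one-line volume comparison. Pick a fundamental domain $F$ with $\mathrm{area}(F)=A$; it is compact, of some diameter $d$. From the tiling $\HH=\bigsqcup_\gamma\gamma F$ (up to null sets) one has $\mathrm{area}(B(\ast,R))=\sum_\gamma\mathrm{area}(B(\ast,R)\cap\gamma F)$, and any $\gamma$ with $\gamma F\cap B(\ast,R)\ne\emptyset$ satisfies $\rho_\HH(\ast,\gamma(\ast))\le R+d$; since each summand is at most $A$,
\[2\pi(\cosh R-1)=\mathrm{area}\bigl(B(\ast,R)\bigr)\ \le\ A\cdot N(R+d),\]
so $N(R+d)$ grows like $e^R$ and $\delta(\Gamma)=1$.

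For a non-cocompact lattice $F$ has infinite diameter, reaching out to the finitely many parabolic fixed points on $\partial\HH$, and this naive bookkeeping breaks down: a tile $\gamma F$ can meet $B(\ast,R)$ through one of its cuspidal fingers while $\rho_\HH(\ast,\gamma(\ast))$ is far larger than $R$, and estimating the area contributed by these fingers crudely only yields $N(R)\gtrsim e^{R/2}$, i.e.\ $\delta(\Gamma)\ge\tfrac12$ — not enough. This cusp bookkeeping is the one genuine obstacle, and it is exactly why the statement is normally quoted rather than reproved. I see two standard ways around it. The first is to invoke the sharp lattice orbit-counting asymptotics (Selberg, Lax--Phillips), $N(R)\sim\frac{\pi}{A}e^R$, which persist in the presence of cusps precisely because a cusp neighbourhood has finite area and so contributes only a lower-order term; this gives $N(R)\gtrsim e^R$ directly. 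The second, more self-contained, route is spectral: $M$ has finite area, so the constant function $1$ lies in $L^2(M)$ and is harmonic, whence the bottom $\lambda_0(M)$ of the $L^2$-spectrum of the Laplacian on $M$ equals $0$; by the Elstrodt--Patterson--Sullivan relation $\lambda_0(M)=\delta(\Gamma)\bigl(1-\delta(\Gamma)\bigr)$ — which holds once $\delta(\Gamma)\ge\tfrac12$, and $\lambda_0(M)<\tfrac14$ already forces $\delta(\Gamma)>\tfrac12$ — one gets $\delta(\Gamma)\bigl(1-\delta(\Gamma)\bigr)=0$, hence $\delta(\Gamma)=1$. Either way, the hard part is controlling the thin parts of $M$; everything else is formal.
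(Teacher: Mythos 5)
The paper does not actually prove this statement: it is quoted verbatim from Nicholls (\cite{Nic}, Theorem 1.6.3), so there is no internal argument to compare against, and your proposal should be judged as a genuine proof sketch. As such it is essentially correct. The reduction of $\delta(\Gamma)\geq 1$ to exponential orbit growth via the counting function $N(R)$ and the partial-summation inequality is sound, and the cocompact case is handled completely by the volume comparison. You are also right that the cusps are the only real obstacle, and your second route does close that gap using only material the paper already quotes: for a finite-covolume $\Gamma$ the constant function is square-integrable on $\quotient{\HH}{\Gamma}$, so $\lambda_0=0$; the dichotomy of \autoref{prop: PattersonSullivan} then excludes $\delta(\Gamma)\leq\frac12$ (which would force $\lambda_0=\frac14$) and gives $\delta(\Gamma)(1-\delta(\Gamma))=0$, hence $\delta(\Gamma)=1$, with $\delta(\Gamma)\leq 1$ supplied by \autoref{thm:Ni1}. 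Since Sullivan's version of that relation applies to all discrete groups, the detour through Selberg's lemma and \autoref{prop: commenssameCE} is not even needed. Your first route, invoking the Selberg/Lax--Phillips asymptotics $N(R)\sim\frac{\pi}{A}e^{R}$, is also valid but imports a theorem far deeper than the statement being proved, so the spectral argument is the preferable self-contained option. In short: the paper buys the result by citation, keeping the analytic burden in Nicholls; your argument makes it a corollary of the Patterson--Sullivan identity already present in the paper, at the modest cost of leaning on that identity rather than on a direct geometric count in the cusped case.
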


Another very useful result concerns a connection between the critical exponent $\delta(\Pi)$ of a Fuchisan group $\Pi$ and  the smallest eigenvalue $\lambda_0(M)$ of the Laplacian $\Delta_M$ on the hyperbolic manifold $M=\quotient{\HH}{\Gamma}$. It was first proven by Patterson in \cite{Pat} for geometrically finite Fuchsian groups and generalized by Sullivan in \cite{Sul} to all discrete groups acting on the hyperbolic space $\HH^{d+1}$ by isometries\footnote{Note that $\HH=\HH^2=\HH^{1+1}$.}:

\begin{prop}[\cite{Pat}/\cite{Sul}] Let $\Gamma$ be a Fuchsian group and $M=\quotient{\HH}{\Gamma}$. Then the smallest eigenvalue $\lambda_0$ of the Laplacian on $M$ is \[\lambda_0(M)=\begin{cases} \delta(\Gamma)(1-\delta(\Gamma)), & \textnormal{if } \delta(\Gamma)\geq \frac{1}{2} \\ \frac{1}{4}, & \textnormal{if } \delta(\Gamma)\leq \frac{1}{2}. \end{cases}\]\label{prop: PattersonSullivan}
\end{prop}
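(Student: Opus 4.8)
The plan is to derive the formula from the long-time behavior of the heat kernel on $M=\HH/\Gamma$ (this is essentially Sullivan's argument); since $\HH=\HH^{1+1}$ the relevant reference constant is the bottom of the spectrum of $\HH$ itself, $\lambda_0(\HH)=\tfrac14$. First I would record the standard fact that for a complete connected manifold $\lambda_0(M)=\lim_{t\to\infty}\bigl(-\tfrac1t\log p^M_t(x,x)\bigr)$ for every $x\in M$: the inequality ``$\geq$'' follows from the spectral representation $p^M_t(x,x)=\int_{\lambda_0}^\infty e^{-t\lambda}\,d\nu_x(\lambda)$ together with $p^M_t(x,x)\leq e^{-(t-1)\lambda_0}p^M_1(x,x)$ for $t\geq 1$, while ``$\leq$'' follows from $p^M_t(x,x)\geq e^{-t\lambda}\nu_x([\lambda_0,\lambda])$ and the fact that the spectral measure $\nu_x$ charges every neighborhood of $\lambda_0$. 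The method of images then gives $p^M_t(x,x)=\sum_{\gamma\in\Gamma}p^{\HH}_t(\tilde x,\gamma\tilde x)$ for a lift $\tilde x$ of $x$, and inserting the Davies--Mandouvalos two-sided bounds for the hyperbolic heat kernel in dimension $2$, $p^{\HH}_t(\tilde x,\gamma\tilde x)\asymp P(t,\rho_\gamma)\,e^{-t/4}\,e^{-\rho_\gamma/2}\,e^{-\rho_\gamma^2/(4t)}$ with $\rho_\gamma:=\rho_\HH(\tilde x,\gamma\tilde x)$ and $P$ growing at most polynomially in $t$ and $\rho_\gamma$, reduces the problem to estimating $\Sigma(t):=\sum_{\gamma\in\Gamma}e^{-\rho_\gamma/2}\,e^{-\rho_\gamma^2/(4t)}$.

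The growth of $\Sigma(t)$ is governed by the orbit-counting function $N(R)=\#\{\gamma\in\Gamma:\rho_\gamma\leq R\}$, and from the definition of $\delta=\delta(\Gamma)$ as the abscissa of convergence of the Poincaré series $\sum_\gamma e^{-s\rho_\gamma}$ an Abel-summation argument yields $N(R)\leq C_\varepsilon e^{(\delta+\varepsilon)R}$ for every $\varepsilon>0$ and, along some sequence $R_k\to\infty$, $N(R_k)\geq e^{(\delta-\varepsilon)R_k}$. If $\delta\leq\tfrac12$ then $\Sigma(t)\leq\sum_\gamma e^{-\rho_\gamma/2}<\infty$ because the exponent $\tfrac12$ strictly exceeds $\delta$, so $p^M_t(x,x)\lesssim P(t)\,e^{-t/4}$ and $\lambda_0(M)\geq\tfrac14$; keeping only the identity term in the sum gives $p^M_t(x,x)\geq p^{\HH}_t(\tilde x,\tilde x)\asymp P(t)\,e^{-t/4}$, hence $\lambda_0(M)\leq\tfrac14$ and equality holds. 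If $\delta>\tfrac12$, I would group $\Sigma(t)$ over the shells $\rho_\gamma\in[R,R+1)$ and optimize the exponent $(\delta-\tfrac12)R-R^2/(4t)$ in $R$, whose maximum is $(\delta-\tfrac12)^2t$, attained near $R\approx(2\delta-1)t$; since $-\tfrac14+(\delta-\tfrac12)^2=-\delta(1-\delta)$, the upper bound on $N$ gives $p^M_t(x,x)\lesssim e^{-\delta(1-\delta)t}$ up to subexponential factors, hence $\lambda_0(M)\geq\delta(1-\delta)$, while summing only over $\rho_\gamma\leq R_k$ and choosing $t_k=R_k/(2\delta-1)$ gives $p^M_{t_k}(x,x)\gtrsim e^{-(\delta(1-\delta)+\varepsilon)t_k}$, hence $\lambda_0(M)\leq\delta(1-\delta)$.

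For the lower bound $\lambda_0(M)\geq\delta(1-\delta)$ when $\delta>\tfrac12$ there is a more conceptual alternative, namely Patterson's original route: construct the conformal density $\{\mu_x\}_{x\in\HH}$ of dimension $\delta$ carried by the limit set; then $\phi(x):=\|\mu_x\|$ is positive, $\Gamma$-invariant, and satisfies $\Delta_{\HH}\phi=\delta(1-\delta)\phi$, so it descends to a positive $\delta(1-\delta)$-eigenfunction on $M$, and the existence of a positive $\lambda$-eigenfunction on a complete manifold forces $\lambda\leq\lambda_0(M)$ (generalized maximum principle).

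The hard part will be the analytic core of the second paragraph: making the Laplace-type estimates genuinely rigorous — uniformity of the hyperbolic heat-kernel bounds in $\rho$ and $t$, bookkeeping of all subexponential prefactors, and upgrading the $\limsup$ in the orbit count to a statement about the actual limit defining $\lambda_0(M)$ — together with, in the alternative route, the construction of the Patterson--Sullivan density and the verification that $\Delta_{\HH}\phi=\delta(1-\delta)\phi$. By contrast, the spectral reduction, the method of images, and the orbit-counting exponent are soft and essentially formal.
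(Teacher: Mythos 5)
The paper does not prove this proposition at all: it is quoted as a known theorem of Patterson \cite{Pat} and Sullivan \cite{Sul}, so there is no internal proof to compare yours against. What you sketch is, in outline, exactly the classical argument from those references: the characterization $\lambda_0(M)=\lim_{t\to\infty}\bigl(-\tfrac1t\log p^M_t(x,x)\bigr)$, the method of images combined with the two-sided hyperbolic heat-kernel bounds, and the Laplace-type optimization over distance shells (your arithmetic $-\tfrac14+(\delta-\tfrac12)^2=-\delta(1-\delta)$ and the choice $R\approx(2\delta-1)t$ are correct), supplemented by Patterson's conformal density, which yields a positive $\delta(1-\delta)$-eigenfunction and hence $\lambda_0(M)\geq\delta(1-\delta)$ via the positive-eigenfunction characterization of the bottom of the spectrum. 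Two caveats. First, at the boundary case $\delta=\tfrac12$ your first branch is not literally valid, since $\sum_\gamma e^{-\rho_\gamma/2}$ need not converge when the exponent equals $\delta$; that case has to be absorbed into the $\delta\geq\tfrac12$ branch, where the $\varepsilon$-losses still give $\lambda_0\geq\tfrac14-\varepsilon^2$ and the identity term gives $\lambda_0\leq\tfrac14$, consistent with the two formulas agreeing there. Second, for Fuchsian groups with torsion the quotient is an orbifold and $\lambda_0(M)$ must be read as the bottom of the $L^2$-spectrum (it need not be an eigenvalue); the heat-kernel identity and the eigenfunction criterion survive, but this deserves a remark. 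Beyond that, the pieces you yourself flag as the hard core --- the uniform Davies--Mandouvalos estimates, the existence of the limit defining $\lambda_0$, and the construction of the Patterson--Sullivan density with $\Delta_\HH\|\mu_x\|=\delta(1-\delta)\|\mu_x\|$ --- are precisely the content of the cited literature, so your proposal is a correct skeleton of the known proof rather than a new or self-contained argument, and within this paper the statement would in any case remain a citation.
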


In particular, $\lambda_0(M)>0$ implies $\delta(\Gamma)<1$. This is why we will take a closer look at the Laplacian on hyperbolic manifolds and the bottom of its spectrum in the following.	

\subsection{From Hyperbolic Surfaces to Schreier Graphs}
In this section we follow the ideas of Tapie (\cite{TapFr}) and Roblin (\cite{RobTap}). They discuss Riemannian coverings and a generalization -- graph-periodic manifolds -- in order to obtain a lower bound on the smallest eigenvalue of the Laplacian on the manifolds. Roughly spoken, a graph-periodic manifold consists of isometric copies of a smaller manifold with boundary, which are glued together according to the structure given by a regular graph. The lower bound is given in Théorème 0.2 of \cite{RobTap} for Riemannian coverings and in Théorème 1 of \cite{TapFr} for graph-periodic manifolds. We will introduce the notion of graph-periodic manifolds and explain how our situation fits into this concept. The graph-periodic manifold is $\quotient{\HH}{\Pi}$ and the smaller manifold is $\quotient{\HH}{\Gamma}$. In the above mentioned theorems there is the assumption that $\Pi$ is a normal subgroup of $\Gamma$. We summarize the proofs to check that the statements still hold without
supposing $\Pi < \Gamma$ being a normal subgroup.

\begin{defi}
Let $k\in\NN$ be fixed. A \emph{marked cell of valency $k$} is a set $\{C,H_1,\ldots,H_k\}$ where $C$ is a smooth Riemannian manifold with piecewise $\mathcal{C}^1$ boundary and $H_i\subset \partial C$ are pairwise disjoint compact codimension $1$ submanifolds which are $\mathcal{C}^1$ with piecewise $\mathcal{C}^1$ boundary (of codimension $2$). The $H_i$ are called \emph{transition zones}, $C$ is called the \emph{cell}.
\end{defi}
\begin{defi}
Let $G=(V,E)$ be a graph of constant valency $k$ and $\{C,H_1,\ldots,H_k\}$ be a marked cell of valency $k$. A manifold $M$ is called \emph{marked $G$-periodic over $C$} if it satisfies the following conditions:
\begin{enumerate}
\item For all $v\in V$ there exist submanifolds $C_v\subset M$ with pairwise disjoint interiors such that $M=\bigcup_{v\in V}C_v$. Furthermore there exists an isometry $J_v:C_v\rightarrow C$.
\item For all $v,w\in V$ there is an edge $\{v,w\}\in E$ if and only if $J_v(C_v\cap C_w)\subset \partial C$ contains a unique transition zone, which we will denote by $H_{vw}$.
\item For all edges $\{v,w\}\in E$ the map $J_w\circ J_v^{-1}:J_v(C_v\cap C_w)\rightarrow J_w(C_v\cap C_w)$ induces an isometry from $H_{vw}$ to $H_{wv}$.
\end{enumerate}
\end{defi}

\paragraph{Subgroups of Fuchsian Groups.}
Let us now describe how the situation of this article fits into the concept of graph-periodic manifolds. Afterwards we will look at a small example using well-known groups.

Given a lattice Fuchsian group $\Gamma$ and a subgroup $\Pi$ there are the corresponding quotients $M=\quotient{\HH}{\Gamma}$ and $N=\quotient{\HH}{\Pi}$. Thus there are three covering maps: $p:N\rightarrow M$ and the universal coverings $\pi_{\Gamma}:\HH\rightarrow M$ and $\pi_\Pi:\HH\rightarrow N$. Since $\Gamma$ is a lattice, there is a fundamental domain $\mathcal{F}_\Gamma\subset \HH$ of finite volume and with finitely many sides for the action of $\Gamma$ on $\HH$, such that pairs of sides correspond to a set $S=\{\gamma_1,\ldots,\gamma_n\}$ of elements generating $\Gamma$ (usually a Dirichlet fundamental domain). We set $C=\overline{\mathcal{F}_\Gamma}$ and mark $C$ by choosing for any of these pairs a transition zone $H_i$ on one of the sides and the $\gamma_i$-image of $H_i$ on the other side. Then $C$ is a marked cell of valency $2\cdot n$ and $N$ consists of $\Gamma$-images of $\pi_\Pi(C)$, one for each coset $\Pi\gamma$. Setting $G$ to be the Schreier graph $G_{\Gamma,\Pi,S}$, we see that $N$ is a $G$-periodic manifold over $C$.

\begin{exa}
Let $\Gamma$ be the modular group $\SL_2(\ZZ)$ and $\Pi$ be the principal congruence subgroup $\Gamma[2]$. The fundamental domains $\mathcal{F}_\Gamma$, $\mathcal{F}_{\Gamma[2]}$ and the Schreier coset graph $G_{\Gamma,\Gamma[2],\{S,T\}}$ for $S=\smatr{0}{-1}{1}{0}$ and $T=\smatr{1}{1}{0}{1}$ are shown in \autoref{pic:SL2ZGPeriodic}. The main difference between this example and the situation we are interested in is that in this example the subgroup has finite index; hence the Schreier graph $G$ is finite and thus in particular amenable. Moreover, $M=\quotient{\HH}{\Gamma}$ and $N=\quotient{\HH}{\Gamma[2]}$ have finite volume and thus the smallest eigenvalues $\lambda_0(M)$ and $\lambda_0(N)$ are $0$, since any constant function $\phi_0:M$ or $N \rightarrow \RR$ is a valid eigenfunction with eigenvalue $0$.
\end{exa}

\begin{figure}[ht]
\begin{center}
\includegraphics[width=0.9\linewidth]{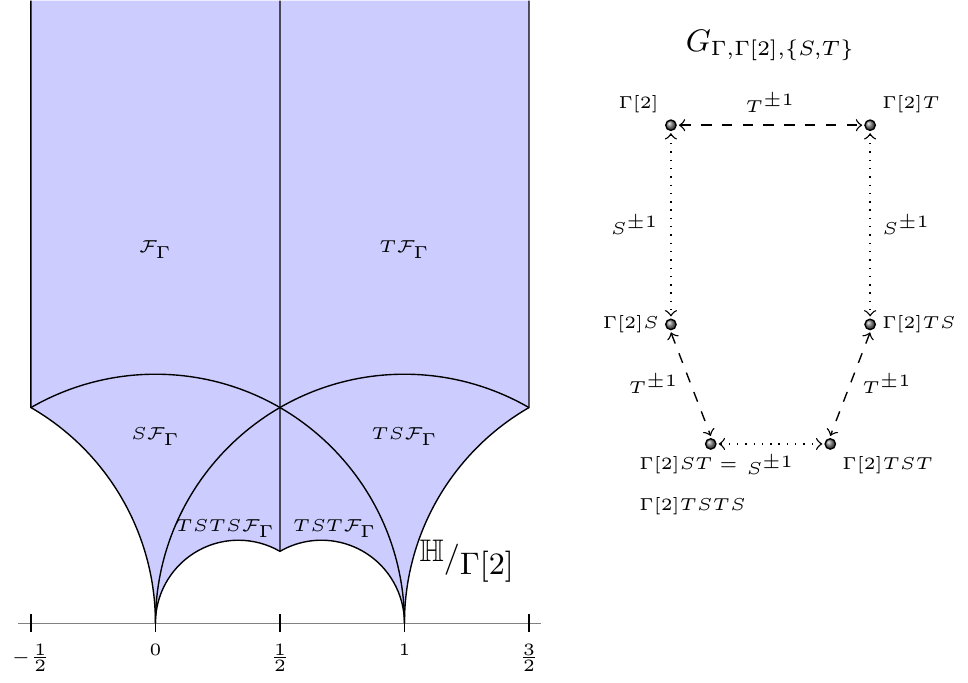}
\caption{$\quotient{\HH}{\Gamma[2]}$ as $G$-periodic manifold over a fundamental domain $\mathcal{F}_\Gamma$ of $\Gamma=\SL_2(\ZZ)$.}
\label{pic:SL2ZGPeriodic}
\end{center}
\end{figure}

Before we state the main result of this section we collect some facts about the Laplacian $\Delta_M$ on a hyperbolic manifold $M=\quotient{\HH}{\Gamma}$. 
For the basic definitions of divergence $\div f$, gradient $\nabla f$ and the Laplacian $\Delta=\div(\nabla f)$ see Chapter $1$ of \cite{Cha}.

For a hyperbolic manifold $M$ let $L^2(M)$ be the space of measurable maps \mbox{$f:M\rightarrow \RR$} with $\int_M |f|^2 < \infty$. On $L^2(M)$ there is the \emph{inner product} $\langle f,g\rangle_M\coloneqq \int_M fg$ and the \emph{norm} $||f||^2_M\coloneqq \langle f,f\rangle_M$.
Furthermore, we define the \emph{Rayleigh quotient} as \[\mathcal{R}_M(f)=\frac{||\nabla f||^2_M}{||f||^2_M}.\] The Min-Max-principle implies that the smallest eigenvalue $\lambda_0(M)$ satisfies \begin{equation}\lambda_0(M)=\inf_{f\in \mathcal{H}^1(M)} \mathcal{R}_M(f),\label{eqn: Rayleigh}\end{equation} where $\mathcal{H}^1(M)\subset L^2(M)$ is the Sobolev space of maps $f\in L^2(M)$, whose gradient is an $L^2$ vector field.
Moreover, if $\phi_0:M\rightarrow \RR$ is a eigenfunction to the eigenvalue $\lambda_0(M)$, then 
\[\lambda_1(M)=\inf \{\mathcal{R}_M(f) \mid f\in \mathcal{H}^1(M), \langle f,\phi_0\rangle_M=0\}.\]

The lower bound on $\lambda_0(\quotient{\HH}{\Pi})$ obtained in the main result will depend on the spectral gap $\nu\coloneqq \lambda_1(C)-\lambda_0(C)$ of the Laplacian on $C$ (with Neumann boundary conditions). Therefore it is important that in our case, where $C$ is a (Dirichlet) fundamental domain of a lattice $\Gamma$, the smallest eigenvalue $\lambda_0(C)$ is isolated and thus the spectral gap is strictly positive. This is shown in three steps:

\begin{itemize}
\item By Lemme 5.2 of \cite{RobTap} the eigenvalue $\lambda_0(C)$ is an isolated eigenvalue of multiplicity $1$, if $\lambda_0(C)<\lambda_0^{ess}(C)$, where $\lambda_0^{ess}(C)$ is the bottom of the essential spectrum of $C$, i.e.\ the infimum of the real numbers for which a so-called Weyl's sequence exist.
\item Since $C$ has finite volume, the constant functions are in $\mathcal{H}^1(C)$ and thus $\lambda_0(C)=0$.
\item It holds $\lambda_0^{ess}(C)>0$:
\end{itemize}

\begin{lem}[\cite{RobTap}, Lemme 5.4] Let $C$ be a Dirichlet fundamental domain of a lattice. The bottom of the essential spectrum $\lambda_0^{ess}(C)$ is at least $\frac{1}{4}$.
\end{lem}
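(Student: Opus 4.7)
The plan is to use a Persson-type characterization of the bottom of the essential spectrum,
\[
\lambda_0^{ess}(C)=\sup_{K\Subset C}\,\inf\bigl\{\mathcal{R}_C(f)\,\bigm|\,f\in C_c^\infty(C\setminus K),\ f\neq 0\bigr\},
\]
so that it suffices, for every $\varepsilon>0$, to exhibit a compact $K\subset C$ such that every test function $f$ supported in $C\setminus K$ satisfies $\mathcal{R}_C(f)\geq \tfrac14-\varepsilon$. If $\Gamma$ is cocompact the claim is vacuous, since $C$ is then compact and $\Delta_C$ has purely discrete spectrum; so we may assume $\Gamma$ has at least one cusp. By the classical reduction theory of Fuchsian lattices (cf.\ \cite{Kat}) $C$ can be decomposed into a compact core and finitely many cuspidal ends $Z_1,\dots,Z_r$, each of which is isometric -- after conjugating the corresponding parabolic fixed point to $\infty$ -- to the quotient of a half-strip $\{(x,y):0\leq x\leq 1,\ y\geq Y_0\}$ by the identification $x\sim x+1$, equipped with the hyperbolic metric $(dx^2+dy^2)/y^2$. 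Choosing $K$ so that $C\setminus K$ consists only of the cusp tips $\{y>Y_0\}$ reduces the claim to a cusp-local estimate.

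On one such cylinder I would Fourier-expand $f(x,y)=\sum_{k\in\ZZ}f_k(y)e^{2\pi ikx}$ and use
\[
\int |\nabla f|^2\,d\mathrm{vol}=\iint(f_x^2+f_y^2)\,dx\,dy,\qquad \int f^2\,d\mathrm{vol}=\iint\frac{f^2}{y^2}\,dx\,dy,
\]
so that Parseval decouples numerator and denominator across Fourier modes. The nonzero modes are then treated by a crude bound: for $k\neq 0$ the numerator contribution $4\pi^2 k^2\int|f_k|^2\,dy$ dominates the denominator $\int|f_k|^2 y^{-2}\,dy$ by a factor of at least $4\pi^2k^2 Y_0^2$, which exceeds $\tfrac14$ as soon as $Y_0\geq\tfrac{1}{4\pi}$.

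The decisive step, and the main obstacle, concerns the zero Fourier mode, for which the bound $\tfrac14$ is sharp. Writing $f_0(y)=y^{1/2}h(y)$ and integrating by parts (the boundary terms vanish because $f_0$ has compact support strictly above $Y_0$), a short computation yields
\[
\int_{Y_0}^\infty |f_0'(y)|^2\,dy\;=\;\frac14\int_{Y_0}^\infty\frac{|f_0(y)|^2}{y^2}\,dy\;+\;\int_{Y_0}^\infty y\,|h'(y)|^2\,dy,
\]
which is a disguised form of the classical Hardy inequality and immediately gives $\mathcal{R}_C(f_0)\geq \tfrac14$. Combined with the nonzero-mode estimate above, this shows that once $Y_0\geq\tfrac{1}{4\pi}$, every admissible test function $f$ satisfies $\mathcal{R}_C(f)\geq \tfrac14$, whence Persson's formula yields $\lambda_0^{ess}(C)\geq \tfrac14$. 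The only nontrivial piece is the Hardy identity for the zero mode; the rest consists of standard Fourier-theoretic manipulations and an appeal to the reduction theory of Fuchsian lattices.
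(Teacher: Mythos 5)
Your argument is correct and lands on the same constant, but it takes a genuinely different route from the paper's. The paper invokes Lemme 5.3 of \cite{RobTap}, a barrier (positive supersolution) criterion: if there is a compact $K$ and a positive function $\phi$ on $C-K$ with $\nabla\phi$ tangent to $\partial C-K$ and $\Delta\phi\geq\lambda\phi$, then $\lambda_0^{ess}(C)\geq\lambda$; it then simply exhibits $\phi(x,y)=y^{1/2}$ on each cusp, for which $\Delta\phi=\frac{1}{4}\phi$. You instead start from Persson's variational characterization and verify the Rayleigh-quotient bound by hand, separating Fourier modes in the cusp and proving a Hardy identity for the zero mode. The two routes are really two faces of the same computation: your substitution $f_0=y^{1/2}h$ is exactly the ground-state transform with respect to the paper's barrier $\phi=y^{1/2}$, and the nonnegative remainder $\int y\,|h'|^2\,dy$ in your identity is what the supersolution criterion packages abstractly. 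What your version buys is self-containedness and an explicit view of why $\frac{1}{4}$ is sharp for the zero mode, at the cost of assuming Persson's theorem in the Neumann setting; the paper's version is shorter because the functional-analytic work is outsourced to the cited lemma. One small correction: the cusp end of the \emph{cell} $C$ is not the quotient cylinder $x\sim x+1$ but a strip whose two vertical sides are distinct boundary components carrying Neumann conditions, so the transverse expansion should be in cosines $\cos(\pi kx)$ rather than $e^{2\pi ikx}$; the nonzero-mode gap then reads $\pi^2k^2Y_0^2$ instead of $4\pi^2k^2Y_0^2$, which only shifts the threshold on $Y_0$ and leaves the conclusion intact.
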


\begin{proof}
By Lemme 5.3 of \cite{RobTap}, if there exists a compact subset $K\subset C$ and a function $\phi: C-K \rightarrow (0,\infty)$ with gradient $\nabla \phi$ tangent on $\partial C - K$ that satisfies $\Delta\phi \geq \lambda \phi$, then $\lambda_0^{ess}(C)\geq \lambda\in \RR$.

Since $C$ is the Dirichlet fundamental domain of a lattice, there exists a compact set $K$ such that $C-K$ is a disjoint union of finitely many cusps. We define such a function $\phi$ only at a cusp at $\infty$, since all cusps are conjugate to such a cusp. It has the form $\{(x,y)\in\HH \mid a\leq x \leq b\ \land y\geq c\}$ for some $a,b,c\in\RR$ with $c>0$. Setting $\phi((x,y))=y^{\frac{1}{2}}$, one easily sees that $\phi$ is strictly positive, $\nabla\phi$ is tangent on $\partial C-K$ and that $\Delta \phi = \frac{1}{4} \phi$. Hence $\lambda_0^{ess}(C)\geq \frac{1}{4}$.
\end{proof}

Summarized this yields:
\begin{prop}\label{prop:etagr0}
Let $C$ be a Dirichlet fundamental domain of a lattice. The spectral gap $\nu\coloneqq \lambda_1(C)-\lambda_0(C)=\lambda_1(C)$ of the Laplacian on $C$ (with Neumann boundary conditions) is strictly positive: $\nu>0.$
\end{prop}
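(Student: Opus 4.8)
The plan is simply to combine the three facts collected just above into the strict inequality $\lambda_1(C)>0$. The key input is Lemme 5.2 of \cite{RobTap}, which was already recalled: it asserts that $\lambda_0(C)$ is an isolated eigenvalue of multiplicity $1$ whenever $\lambda_0(C)<\lambda_0^{ess}(C)$, i.e.\ whenever the bottom of the spectrum lies strictly below the bottom of the essential spectrum. So the whole argument reduces to verifying this one strict inequality and then reading off the conclusion.

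First I would pin down $\lambda_0(C)$. Since $\Gamma$ is a lattice, $C=\overline{\mathcal{F}_\Gamma}$ is the closure of a finite-volume, finitely-sided Dirichlet fundamental domain, so the constant function $1$ belongs to the Sobolev space $\mathcal{H}^1(C)$ and has Rayleigh quotient $0$; hence by (\ref{eqn: Rayleigh}) we get $\lambda_0(C)=0$. Next, the previous lemma (Lemme 5.4 of \cite{RobTap}, proved above using the test function $\phi(x,y)=y^{1/2}$ on each cusp) gives $\lambda_0^{ess}(C)\geq\tfrac14$. Putting these together yields $\lambda_0(C)=0<\tfrac14\leq\lambda_0^{ess}(C)$, so Lemme 5.2 of \cite{RobTap} applies: $\lambda_0(C)=0$ is an isolated point of the spectrum of multiplicity one. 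Therefore the next spectral value $\lambda_1(C)$ satisfies $\lambda_1(C)>0$, which is precisely the assertion $\nu=\lambda_1(C)-\lambda_0(C)=\lambda_1(C)>0$.

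There is essentially no hard step remaining: all the analytic work — the isolation criterion and the lower bound on the essential spectrum — has already been carried out in the two lemmata imported from \cite{RobTap}. The only point deserving a line of care is to check that the hypotheses of those lemmata are met in our situation, namely that a lattice's Dirichlet fundamental domain has only finitely many noncompact ends, each a cusp of the standard shape $\{a\leq x\leq b,\ y\geq c\}$, and that the Neumann boundary conditions on $C$ are compatible both with using constants as a $\lambda_0$-eigenfunction and with the gradient-tangency hypothesis in Lemme 5.4; both are immediate here.
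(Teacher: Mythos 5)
Your argument is exactly the paper's: combine $\lambda_0(C)=0$ (constants lie in $\mathcal{H}^1(C)$ since $C$ has finite volume), the bound $\lambda_0^{ess}(C)\geq\frac{1}{4}$ from Lemme 5.4 of \cite{RobTap}, and the isolation criterion of Lemme 5.2 of \cite{RobTap} to conclude $\lambda_1(C)>0$. This matches the three-step summary the paper gives immediately before the proposition, so the proposal is correct and takes essentially the same route.
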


Let us now state the main result of this section. The proof is basically a special case of the proof of Théorème 4.3 of \cite{RobTap}.

\begin{prop}\label{prop: RT}
Let $\Gamma$ be a lattice, $\Pi<\Gamma$ a subgroup, $C$ a Dirichlet fundamental domain of $\Gamma$ and $G=G_{\Gamma,\Pi,S}$ the Schreier graph of $\Gamma$ with respect to $\Pi$ and the finite generating set $S$ obtained from $C$. Then $N=\quotient{\HH}{\Pi}$ is a marked $G$-periodic manifold over $C$ as described above. The bottom of the spectrum $\lambda_0(N)$ can be bounded above by
\[\lambda_0(N)\geq \min\left\{\frac{\frac{A}{V}\nu}{\nu+\frac{A}{V}\mu_0(G)}\mu_0(G),\nu\right\},\] where $A>0$ is a constant depending on the geometry of $C$ in a neighborhood of the transition zones, $\nu>0$ is the spectral gap of $C$, $V$ is the (finite) volume of $C$ and $\mu_0(G)$ is the spectral gap of the combinatorial Laplacian on the graph $G$.
\end{prop}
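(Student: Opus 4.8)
The plan is to follow the strategy of Théorème 4.3 of \cite{RobTap}, but to keep track carefully of where normality of $\Pi$ in $\Gamma$ is used and to replace those spots by arguments valid for an arbitrary subgroup. The key point is that the entire construction — the cell $C=\overline{\mathcal{F}_\Gamma}$, the transition zones $H_i$, the decomposition $N=\bigcup_{v\in V(G)}C_v$, and the isometries $J_v$ — is purely combinatorial/geometric and never used normality; it only used that $C$ is a Dirichlet fundamental domain of the lattice $\Gamma$ and that the cosets $\Pi\gamma$ index the tiles. This was verified in the paragraph ``Subgroups of Fuchsian Groups'' above, so $N$ genuinely is a marked $G$-periodic manifold over $C$ with $G=G_{\Gamma,\Pi,S}$, and \autoref{prop:etagr0} gives $\nu>0$. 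Hence it remains to prove the spectral estimate.

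First I would set up the variational comparison. Take any $f\in\mathcal{H}^1(N)$ with $\|f\|_N=1$; the goal is to bound $\mathcal{R}_N(f)=\|\nabla f\|_N^2$ from below by the asserted minimum. Using the tiling $N=\bigcup_v C_v$ and the isometries $J_v:C_v\to C$, pull $f$ back to a family $(f_v)_{v\in V}$ of functions on $C$, so that $\|f\|_N^2=\sum_v\|f_v\|_C^2$ and $\|\nabla f\|_N^2=\sum_v\|\nabla f_v\|_C^2$. Decompose each $f_v$ into its mean value $\overline{f_v}:=\frac{1}{V}\int_C f_v$ and the orthogonal remainder $g_v:=f_v-\overline{f_v}$; since $\lambda_0(C)=0$ with constant eigenfunction and spectral gap $\nu$, the Poincaré/Min-Max inequality on $C$ gives $\|\nabla f_v\|_C^2=\|\nabla g_v\|_C^2\geq \nu\|g_v\|_C^2$. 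Define the $\ell^2(V)$-function $b(v):=\overline{f_v}$; then $\|b\|_{\ell^2}^2=\frac{1}{V}\sum_v(\int_C f_v)^2$. The strategy is now to split into two regimes: if a definite proportion of the mass of $f$ lies in the ``oscillating'' part $\sum_v\|g_v\|_C^2$, then $\|\nabla f\|_N^2\geq\nu\sum_v\|g_v\|_C^2$ already yields a bound comparable to $\nu$; otherwise most of the mass sits in the ``constant-per-tile'' part, and we must extract the graph's spectral gap.

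The heart of the argument — and the step I expect to be the main obstacle — is the second regime: relating the Dirichlet energy $\|\nabla f\|_N^2$ across the transition zones to the combinatorial energy $\|\nabla b\|^2=\sum_{\{v,w\}\in E}(\overline{f_v}-\overline{f_w})^2$ of $b$ on the graph $G$. For each edge $\{v,w\}$ one compares the traces of $f_v$ and $f_w$ on the common transition zone $H_{vw}$; a trace/interpolation inequality on the fixed compact geometry near each $H_i$ produces a constant $A>0$ (independent of $v,w$ and of $f$) with $\int_{H_{vw}}|f_v-f_w|^2 \lesssim \|\nabla f\|^2_{C_v\cup C_w}$ up to controlled contributions of $\|g_v\|_C,\|g_w\|_C$, and conversely $|\overline{f_v}-\overline{f_w}|^2$ is controlled by $\int_{H_{vw}}|f_v-f_w|^2$ plus the same oscillation terms. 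Summing over all edges and invoking $\mu_0(G)\|b\|_{\ell^2}^2\leq\|\nabla b\|^2$ (the Min-Max formula \eqref{muNull} for the combinatorial Laplacian) converts the graph gap into a lower bound on $\|\nabla f\|_N^2$ in terms of $\frac{A}{V}\mu_0(G)\|b\|_{\ell^2}^2$. Crucially, none of these trace estimates see whether $\Pi$ is normal: they are local near a single transition zone and uniform because all tiles are isometric copies of the one fixed cell $C$; normality was only ever used in \cite{RobTap} to identify $N$ with a Galois cover, which we have already bypassed.

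Finally I would combine the two regimes. Writing $t:=\sum_v\|g_v\|_C^2$ and $1-t=\sum_v\overline{f_v}^2\,V/\!\cdot$ (the constant-part mass), one has on one hand $\|\nabla f\|_N^2\geq\nu t$, and on the other hand $\|\nabla f\|_N^2\gtrsim \frac{A}{V}\mu_0(G)(1-t)-(\text{error in }t)$, where the error is absorbed because it is again bounded by a multiple of $\nu t$. Optimizing the resulting lower bound over $t\in[0,1]$ — a one-variable calculation of the form $\min_t\max\{\nu t,\ \alpha(1-t)-\beta t\}$ with $\alpha=\frac{A}{V}\mu_0(G)$ — yields exactly the stated expression $\min\bigl\{\tfrac{(A/V)\nu}{\nu+(A/V)\mu_0(G)}\mu_0(G),\ \nu\bigr\}$. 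Taking the infimum over all admissible $f$ and using \eqref{eqn: Rayleigh} gives $\lambda_0(N)\geq$ this minimum, completing the proof. The only genuinely delicate points are choosing the constant $A$ uniformly (handled by isometry of the tiles and compactness of the transition-zone neighborhoods) and bookkeeping the cross-terms in the two-regime split so that all errors are swallowed by $\nu t$; everything else is a transcription of \cite{RobTap} with ``normal subgroup / Galois cover'' systematically replaced by ``subgroup / $G$-periodic manifold''.
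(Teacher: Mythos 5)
Your plan is correct and shares the skeleton of the paper's proof: both are transcriptions of Théorème 4.3 of \cite{RobTap} with the normality hypothesis stripped out, resting on the per-cell decomposition $f_v=b_v\EE+g_v$, the spectral gap $\nu$ of $C$ applied to the remainders, a uniform transition-zone estimate producing the constant $A$, and the combinatorial Min-Max bound $\sum_{\{v,w\}\in E(G)}(b_v-b_w)^2\geq\mu_0(G)\sum_v b_v^2$. You differ in two places, both legitimately. First, the endgame: the paper takes a near-minimizer $f_\epsilon$, derives an inequality in $\lambda_0(N)+\epsilon$ and solves it, the case $\lambda_0(N)+\epsilon\geq\nu$ producing the minimum with $\nu$; your direct optimization over the mass split $t=\sum_v\|g_v\|_C^2$ gives the first term of the minimum outright (which is in any case always $\leq\nu$), and the ``error in $t$'' you worry about does not really arise once the oscillation terms are folded into the constant, so the bookkeeping is if anything cleaner. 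Second, the constant $A$: the paper simply cites Lemme 4.8 of \cite{RobTap} (a Newtonian-capacity estimate in tubes $H_{ij}\times[-R,R]$) and declines to prove it, whereas you sketch a trace/interpolation argument. Note that the traces of $f_v$ and $f_w$ on $H_{vw}$ coincide, since they are restrictions of the same $\mathcal{H}^1$ function to the two sides of a common hypersurface, so $\int_{H_{vw}}|f_v-f_w|^2=0$; your ``conversely'' step then collapses to $|b_v-b_w|^2\lesssim\|g_v\|^2_{\mathcal{H}^1(C)}+\|g_w\|^2_{\mathcal{H}^1(C)}$, which via the trace theorem on a compact collar of each $H_i$ and the gap $\nu$ still yields $\sum_{\{v,w\}\in E(G)}(b_v-b_w)^2\leq A^{-1}\sum_v\|\nabla f_v\|_{C}^2$ — a valid and more self-contained route than the capacity argument, at the mild price that your $A$ also depends on $\nu$ and on the valency $2|S|$ rather than only on the local geometry near the transition zones, which is harmless for \autoref{prop: Tapie}. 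Your observation that normality enters nowhere agrees with the paper.
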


Combining this proposition with \autoref{prop: PattersonSullivan}, \autoref{prop: CheegerSpecGap} and \autoref{IndependFromGS} this implies:
\begin{cor}\label{prop: Tapie}
Let $\Gamma$ be a lattice and $\Pi<\Gamma$ a subgroup. If the Schreier graph $G_{\Gamma,\Pi,S}$ is non-amenable for any finite set $S$ generating $\Gamma$, then the critical exponent $\delta(\Pi)$ is strictly smaller than $1$.
\end{cor}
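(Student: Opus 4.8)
The plan is to assemble the cited results into a single chain of implications, the manifold $N=\quotient{\HH}{\Pi}$ serving as the intermediary between the combinatorial world of the Schreier graph and the critical exponent of $\Pi$. First I would fix a Dirichlet fundamental domain $C$ of the lattice $\Gamma$. Since $\Gamma$ is a lattice, $C$ has finitely many sides, so the side-pairing transformations form a \emph{finite} generating set $S$ of $\Gamma$, and, as set up just before \autoref{prop: RT}, $N$ is a marked $G$-periodic manifold over $C$ with $G=G_{\Gamma,\Pi,S}$. By hypothesis the Schreier graph is non-amenable for some finite generating set; by \autoref{IndependFromGS} amenability of a Schreier graph is independent of the choice of finite generating set, so $G=G_{\Gamma,\Pi,S}$ is non-amenable as well, i.e.\ $\mathfrak{c}(G)>0$.

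Next I would convert non-amenability into a positive spectral gap. By \autoref{prop: CheegerSpecGap} the combinatorial Laplacian on $G$ satisfies $\mu_0(G)\geq \frac{\mathfrak{c}(G)^2}{2k}>0$, where $k$ is the (finite) maximal valency of $G$. On the manifold side, \autoref{prop:etagr0} gives $\nu=\lambda_1(C)>0$ for the Neumann spectral gap of the Dirichlet fundamental domain $C$. Plugging $\mu_0(G)>0$ and $\nu>0$ into the lower bound of \autoref{prop: RT}, in which $A$ and $V$ are positive constants, every quantity occurring in $\min\bigl\{\tfrac{(A/V)\nu}{\nu+(A/V)\mu_0(G)}\mu_0(G),\,\nu\bigr\}$ is strictly positive, hence $\lambda_0(N)>0$.

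Finally I would invoke \autoref{prop: PattersonSullivan} with the hyperbolic manifold $\quotient{\HH}{\Pi}=N$. If $\delta(\Pi)\leq\frac12$ we are already done, since $\frac12<1$. If $\delta(\Pi)\geq\frac12$, then $\lambda_0(N)=\delta(\Pi)\bigl(1-\delta(\Pi)\bigr)$, and $\lambda_0(N)>0$ forces $\delta(\Pi)\neq 1$; combined with the general bound $\delta(\Pi)\leq 1$ from \autoref{thm:Ni1} this yields $\delta(\Pi)<1$. In either case $\delta(\Pi)<1$, which is the assertion.

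For this corollary the main obstacle is essentially bookkeeping rather than substance: the hard analytic work — removing the normality assumption from the Roblin--Tapie estimate to obtain \autoref{prop: RT}, together with the positivity statement \autoref{prop:etagr0} — has already been carried out in the preceding subsection. The only points that genuinely need care are verifying that the side-pairing generating set of the lattice $\Gamma$ is finite (so that both \autoref{IndependFromGS} and the $G$-periodic set-up apply), and keeping track of which branch of the Patterson--Sullivan dichotomy one lands in when concluding $\delta(\Pi)<1$.
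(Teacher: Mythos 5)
Your proposal is correct and follows exactly the route the paper intends: the corollary is stated there as an immediate consequence of combining \autoref{prop: RT} with \autoref{prop: PattersonSullivan}, \autoref{prop: CheegerSpecGap}, \autoref{prop:etagr0} and \autoref{IndependFromGS}, which is precisely the chain you spell out. Your added care about the finiteness of the side-pairing generating set and the case distinction in the Patterson--Sullivan formula is sound (note only that non-amenability already forces the Schreier graph to be infinite, so the hypothesis $|V|=\infty$ in \autoref{prop: CheegerSpecGap} is automatic).
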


\begin{proof}[Proof of \autoref{prop: RT}.]
First observe that since $\Gamma$ is a lattice, $C$ has finite volume $V$ and thus the constant map $\EE_C: C\rightarrow \{1\}\subset\RR$ is an eigenfunction for the eigenvalue $\lambda_0(C)=0$. We can lift $\EE_C$ to the map $\EE_N:N\rightarrow\{1\}$, but in the following we will just write $\EE$ for both maps.

In \eqref{eqn: Rayleigh} we saw that $\lambda_0(N)=\inf_{f\in \mathcal{H}^1(N)}\mathcal{R}(f)$. Let $f_\epsilon$ be a smooth map with compact support satisfying $\mathcal{R}(f_\epsilon)=\frac{||\nabla f_\epsilon||_N}{||f_\epsilon||_N}\leq \lambda_0(N)+\epsilon$.

For every vertex $i\in V(G)$ we write $C_i$ for the image of the cell $C$ corresponding to this vertex and define
\begin{itemize}
\item $f_{\epsilon,i}\coloneqq f_\epsilon|_{C_i}$,
\item $b_i\coloneqq \frac{1}{V}\langle f_{\epsilon,i},\EE\rangle_{C_i}=\frac{1}{V}\int_{C_i} f_{\epsilon,i}\cdot \EE$, and
\item $g_i\coloneqq f_{\epsilon,i}-b_i\EE$.
\end{itemize} 

One easily checks that $g_i$ and $\EE$ are orthogonal (with respect to the inner product on $C_i$) and hence $\mathcal{R}(g_i)\geq \lambda_1(C)=\nu$. Moreover by bilinearity of the inner product we have $||g_i||_{C}^2=||f_{\epsilon,i}||_C^2-b_i^2V$.

We want to estimate $\lambda_0(N)+\epsilon$ and know that it is at least $\frac{||\nabla f_\epsilon||_{N}^2}{||f_\epsilon||_{N}^2}=\frac{\sum_{i\in V(G)}||\nabla f_\epsilon||_{C_i}^2}{\sum_{i\in V(G)}||f_\epsilon||_{C_i}^2}$. We continue by giving a lower bound on the numerator and an upper bound on the denominator.

The first is done in Lemme 4.8 of \cite{RobTap}, which states, that there is a constant $A>0$ depending on neighborhoods of the transition zones such that \[\sum_{i\in V(G)}||\nabla f_{\epsilon,i}||_{C_i}^2\geq A \sum_{\{i,j\}\in E(G)} (b_i-b_j)^2.\]

The proof is done by estimating $||\nabla f_{\epsilon,i}||_{H_{ij}\times [0,R]}^2$ in terms of the Newtonian Capacity of the ``rectangles'' $H_{ij}\times [-R,R]$. These are tubular neigborhoods of the transition zones, which have one part in the cell $C_i$ and one in the cell $C_j$ for each edge $\{i,j\}\in E(G)$. This is technical and we will not give the details here.

For the upper bound on $\sum_{i\in V(G)}||f_\epsilon||_{C_i}^2$ let us estimate $||\nabla f_{\epsilon,i}||_{C_i}^2=||\nabla (b_i\EE) +\nabla g_i||_{C_i}^2$. The first summand obviously vanishes and we obtain \begin{align*}
(\lambda_0(N)+\epsilon)\sum_{i\in V(G)}||f_{\epsilon,i}||_{C_i}^2 &\geq \sum_{i\in V(G)}||\nabla f_{\epsilon,i}||_{C_i}^2\\& \geq \sum_{i\in V(G)}||\nabla g_i||_{C_i}^2\\ & \geq\sum_{i\in V(G)}\nu ||g_i||_{C_i}^2\\ &\geq \nu \sum_{i\in V(G)}||\nabla f_{\epsilon,i}||_{C_i}^2 - \nu\sum_{i\in V(G)} b_i^2V.\end{align*} 
Assume that $\lambda_0(N)+\epsilon < \nu$. Otherwise the proposition is clearly true. Then the above inequality is equivalent to 
\[\sum_{i\in V(G)}||f_{\epsilon,i}||_{C_i}^2 \leq V\left(1-\frac{\lambda_0(N)+\epsilon}{\nu}\right)^{-1}\sum_{i\in V(G)} b_i^2 .\]

With these bounds on the numerator and denominator we summarize:
\[\lambda_0(N)+\epsilon\geq \frac{\sum_{i\in V(G)}||\nabla f_\epsilon||_{C_i}^2}{\sum_{i\in V(G)}||f_\epsilon||_{C_i}^2} \geq \frac{A}{V} \frac{\sum_{\{i,j\}\in E(G)}(b_i-b_j)^2}{\sum_{i\in V(G)} b_i^2}\left(1-\frac{\lambda_0(N)+\epsilon}{\nu}\right).\]

By \eqref{muNull} the quotient $\frac{\sum_{\{i,j\}\in E(G)}(b_i-b_j)^2}{\sum_{i\in V(G)} b_i^2}$ is greater or equal to the spectral gap $\mu_0$ of the graph $G$. Inserting this observation and solving for $\lambda_0(N)+\epsilon$ yields 
\[\lambda_0(N)+\epsilon\geq \frac{\frac{A}{V}\nu}{\nu+\frac{A}{V}\mu_0(G)}\mu_0(G).\] Since all constants on the right-hand side are independent of $\epsilon$ this finishes the proof.
\end{proof}


\section{Prototypes of Veech Surfaces in $\Omega M_2(2)$}
In this section we take a closer look at the prototypes $L_D$ defined in \autoref{subs:Lattices}. All results in this section in principle also hold for the other two types of surfaces $L_{D,\pm 1}$ from \autoref{def:LDorLDeps}. One just has to change some numbers. This is why we will not give detailed proofs for these types, but summarize the results at the end of this section.
\subsection{The Horizontal and Vertical Cylinder Decompositions of $L_D$}\label{subs:AandB}
There are two parabolic elements of $\SL(L_D)$ that can be found easily: one fixing the horizontal, one fixing the vertical direction. They can be found by looking at the horizontal respectively vertical cylinder decomposition of the surface $L_D$. On each cylinder the parabolic element will act as a (possibly multiple) Dehn twist. For the calculations we set $d\coloneqq\frac{D}{4}$ use the \emph{modulus} of a cylinder, which is defined as the quotient of circumference $c$ and height $h$ of the cylinder: $\mu\coloneqq \frac{c}{h}$. 

Let us start with the horizontal decomposition. The surface $L_D$ decomposes into two cylinders whose circumferences are in the horizontal direction: the upper cylinder $C_u=\quotient{[0,1]\times(1,w)}{\sim}$ and the lower cylinder $C_d=\quotient{[0,1+w]\times(0,1)}{\sim}$. The moduli are $\mu_u=\frac{1}{w-1}$ and $\mu_d=\frac{1+w}{1}$. The quotient of the moduli is $\frac{\mu_d}{\mu_u}=(w+1)(w-1)=w^2-1=\frac{d-1}{1}$. Hence the desired parabolic element in $\SL(L_D)$ is \[\underline{\underline{B\coloneqq\matr{1}{1+w}{0}{1}}}=\matr{1}{1\cdot \mu_d}{0}{1}=\matr{1}{(d-1)\cdot \mu_u}{0}{1}.\]
This means $B$ twists the lower cylinder once and the upper cylinder $d-1$ times. Thus we obtain for a point $\PointL{x}{y}\in C_d$: \[\matr{1}{1+w}{0}{1}\circ\PointL{x}{y}=\PointL{x+(1+w)y \mod (1+w)}{y}\]

For the action on points of $C_u$ we have to bear in mind that the lower left corner of the upper cylinder's closure has coordinates $\PointL{0}{1}$, not $\PointL{0}{0}$. This is why we have to replace $y$ by $y-1$, apply the action by the Dehn twist and then shift the image point $1$ upwards again:
\[\matr{1}{1+w}{0}{1}\circ\PointL{x}{y-1}=\PointL{x+(1+w)(y-1) \mod 1}{y-1+1}\]

Note that, as the $y$-coordinate is not changed by $B$, the image points always lie in the same cylinder as $\PointL{x}{y}$. Knowing this and the description of the action of $B$, we also know the action of $B^l$ for all $l\in\ZZ$:

\begin{equation}
B^l\circ \PointL{x}{y}=
\begin{cases}
\PointL{x+l(1+w)y \mod (1+w)}{y} & \text{ if } y\leq 1\\
\PointL{x+l(1+w)(y-1) \mod 1}{y} & \textnormal{ if } y>1.
\end{cases}\label{eqn:Bl}
\end{equation}

The points of $L_D$ we are interested in are the connection points which by \autoref{prop: ConnPoints} are the points with coordinates in $\QQ(w)$. We will write them in the form $\PointL{x}{y}=\PointL{x_r+x_iw}{y_r+y_iw}$ and call $x_r,y_r\in\QQ$ the \emph{\rp s} and $x_i,y_i\in\QQ$ the \emph{\ip s}.

For the horizontal parabolic element $B=\smatr{1}{1+w}{0}{1}$, a point $Q=\PointL{x_r+x_iw}{y_r+y_iw}$ and an integer $l$ the $x$-coordinate of the point $B^l\circ Q$ is denoted by $x_{B^l}$. For its \rp \ and \ip\ we write $x_{B^l,r}$ and $x_{B^l,i}$, respectively. The difference $x_{B^l,i}-x_i$ is denoted by $\Delta_{B^l}(Q)$.

For points $Q=\PointL{x}{y}=\PointL{x_r+x_iw}{y_r+y_iw}\in L_D$ with $y\leq 1$, i.e. $Q\in C_d$, \eqref{eqn:Bl} states $x_{B^{l}}=x+ l (1+\om)y \mod (1+\om)$. Hence the difference of the \ip s amounts to \[\Delta_{B^l}(Q)=l (y_r+y_i) - q_{y,l} \textnormal{ with } q_{y,l}=\lfloor ly \rfloor \textnormal{ or } \lceil ly \rceil,\]
which implies that for all $Q\in C_d$ and all $l\in\ZZ$ there exists an $r\in (-1,1)$ such that $\Delta_{B^l}(Q)=ly_r+ly_i -ly - r = ly_i(1-\om) -r$.

For points $Q$ of the upper cylinder $C_u$ -- i.e.\@ the points with $y>1$ -- \eqref{eqn:Bl} states $x_{B^{l}}=x+ l (1+\om) y - l (1+\om) \mod 1$. Therefore, in this case the difference of the \ip s is \[\Delta_{B^l}(Q)=l (y_r + y_i - 1).\]

In summary we have the following results for $\Delta_{B^l}(Q)$:
\begin{equation}
\Delta_{B^l}(Q)=\begin{cases}
ly_i(1-\om) -r \text{ for an } r\in (-1,1) & \text{ if } y\leq 1\\
l (y_r + y_i - 1) & \textnormal{ if } y>1.
\end{cases}\label{eqn:DeltaBl}
\end{equation}

By similar calculations the vertical parabolic element is $A=\matr{1}{0}{w}{1}$ and $A^k$ acts as follows:
\begin{equation}
A^k\circ \PointL{x}{y}=
\begin{cases}
\PointL{x}{y+k x\om \mod \om} & \text{ if } x\leq 1\\
\PointL{x}{y+ k (x\om -\om) \mod 1} & \textnormal{ if } x>1.
\end{cases}\label{eqn:Ak}
\end{equation}

Similarly to the notation $x_{B^l}$
we introduce the notations $y_{A^k}$ for the $y$-coordinate of $A^k\circ Q$ as well as $y_{A^k,r}$ and $y_{A^k,i}$ for the \rp\ and the \ip \ of $y_{A^k}$, respectively. Furthermore we define $\Delta_{A^k}(Q)\coloneqq y_{A^k,i}-y_i$ and obtain for a point $Q=\PointL{x}{y}=\PointL{x_r+x_iw}{y_r+y_iw}\in L_D$ in the left or right cylinder:

\begin{equation}
\Delta_{A^k}(Q)=\begin{cases}
-kx_iw -r \text{ for an } r\in (-1,1) & \text{ if } x\leq 1\\
k(x_r - 1) & \textnormal{ if } x>1.
\end{cases}\label{eqn:DeltaAk}
\end{equation}

\subsection{Points Periodic under $A$ or $B$}\label{subsect:periodic}
To check if a point $Q=\PointL{x,y}=\PointL{x_r+x_iw,y_r+y_iw}\in L_D$ is periodic under $A$ or $B$ one just has to look at its \emph{splitting ratio} in the corresponding vertical respectively horizontal cylinder (cf.\@ \cite{HSGeom}). By this we mean the quotient of the height of the point in the cylinder and the height of the cylinder. It is a well-known fact that $Q$ is periodic under the parabolic element if and only if its splitting ratio is rational. In this section we collect conditions on $Q$ to have a rational splitting ratio and thus to be periodic under $A$ or $B$.

\begin{lem}\label{lem:BPer}
A point $Q=\PointL{x}{y}=\PointL{x_r+x_iw}{y_r+y_iw}\in L_D$ is periodic under the action of $B$ if and only if one of the following two conditions holds:
\begin{enumerate}
\item $y\leq 1$ and $y_i=0$.
\item $y>1$ and $y_r=1-y_i$.
\end{enumerate}
In particular if $Q$ is periodic under $B$ then $0\leq y_i<1$. 
\end{lem}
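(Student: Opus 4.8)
The plan is to use the criterion recalled at the start of this subsection: $Q$ is periodic under the parabolic element $B$ if and only if its splitting ratio in the relevant horizontal cylinder is rational. Since $B$ does not change the $y$-coordinate, the whole $\langle B\rangle$-orbit of $Q$ stays inside the cylinder containing $Q$, so I only have to analyse the two cases $Q\in C_d=[0,1+w]\times(0,1)$ (i.e.\ $y\le 1$) and $Q\in C_u=[0,1]\times(1,w)$ (i.e.\ $y>1$) separately. Throughout I use that $w=\sqrt{D/4}$ is irrational, which holds because $D$ is not a square (so $d\coloneqq D/4$ is not a perfect square), and hence that a number $a_r+a_iw$ with $a_r,a_i\in\QQ$ lies in $\QQ$ if and only if $a_i=0$.

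If $y\le 1$, the height of $Q$ in $C_d$ is $y$ and the height of $C_d$ is $1$, so the splitting ratio is $y$; equivalently, \eqref{eqn:Bl} gives that $B^l\circ Q=Q$ amounts to $l(1+w)y\equiv 0\pmod{1+w}$, i.e.\ to $ly\in\ZZ$, which has a solution $l\in\ZZ\setminus\{0\}$ exactly when $y\in\QQ$. Since $y=y_r+y_iw$ with $y_r,y_i\in\QQ$ and $w$ irrational, this happens iff $y_i=0$; this is condition~(1).

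If $y>1$, the height of $Q$ in $C_u$ is $y-1$ and the height of $C_u$ is $w-1$, so the splitting ratio is $\tfrac{y-1}{w-1}$. Multiplying numerator and denominator by $w+1$ and using $w^2-1=d-1\in\QQ\setminus\{0\}$, rationality of this ratio is equivalent to $(w+1)(y-1)\in\QQ$. Expanding $y-1=(y_r-1)+y_iw$ and using $w^2=d$,
\[(w+1)(y-1)=\bigl[(y_r-1)+y_id\bigr]+\bigl[(y_r-1)+y_i\bigr]w,\]
which by irrationality of $w$ lies in $\QQ$ iff $(y_r-1)+y_i=0$, i.e.\ $y_r=1-y_i$; this is condition~(2). (The same conclusion follows directly from \eqref{eqn:Bl}, since there $B^l\circ Q=Q$ is equivalent to $l(1+w)(y-1)\in\ZZ$.)

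For the final assertion: in case~(1) we have $y_i=0$, so $0\le y_i<1$ trivially. In case~(2), $y=(1-y_i)+y_iw=1+y_i(w-1)$ with $w>1$; since $Q\in C_u$ forces $1<y<w$, we get $0<y_i(w-1)<w-1$, hence $0<y_i<1$. In either case $0\le y_i<1$. The argument is essentially a routine computation; the only points requiring attention are keeping the two cylinders separate (legitimate because $B$ fixes $y$) and invoking the irrationality of $w$ to read off the coefficient conditions, so I do not anticipate any genuine obstacle here.
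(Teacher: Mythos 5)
Your proof is correct and follows essentially the same route as the paper: it invokes the rational-splitting-ratio criterion in the lower and upper horizontal cylinder, performs the same computation $\frac{y-1}{w-1}=\frac{(y-1)(w+1)}{d-1}$ to extract the condition $y_r=1-y_i$, and derives $0\leq y_i<1$ from the bounds on $y$. The only additions (the cross-check via \eqref{eqn:Bl} and the explicit appeal to irrationality of $w$) are harmless refinements of the argument already in the paper.
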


\begin{proof}
If $y\leq 1$, the point is in the lower cylinder $C_d$ which has height $1$. The height of $Q$ in $C_d$ is $y$, therefore the splitting ratio is $\sr_{C_d}(Q)=\frac{y}{1}=y_r+y_iw$. This is rational if and only if $y_i=0$.

If $y>1$, the point is in the upper cylinder $C_u$ which has height $w-1$. The height of $Q$ in $C_u$ is $y-1$, therefore the splitting ratio is \[\sr_{C_u}(Q)=\frac{y-1}{w-1}=\frac{(y-1)(w+1)}{d-1}=\frac{1}{d-1}(y_r+dy_i-1+(y_r+y_i-1)w).\] This is rational if and only if $y_r=1-y_i$.

For the additional conclusion we observe that solving the inequality $1\leq y_r+y_iw<w$ for $y_r$ and setting $y_r=1-y_i$ yields $1-y_iw\leq 1-y_i<w-y_iw$, which is equivalent to $0\leq y_i < 1.$
\end{proof}

With similar calculations we obtain:
\begin{lem}\label{lem:APer}
A point $Q=\PointL{x}{y}=\PointL{x_r+x_iw}{y_r+y_iw}\in L_D$ is periodic under the action of $A$ if and only if one of the following two conditions holds:
\begin{enumerate}
\item $x\leq 1$ and $x_i=0$.
\item $x>1$ and $x_r=1$.
\end{enumerate}
In particular if $Q$ is periodic under $A$ then $0\leq x_i<1$. 
\end{lem}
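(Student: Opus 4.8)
The plan is to mimic the proof of \autoref{lem:BPer}, interchanging the roles of the two coordinates and replacing the horizontal cylinder decomposition of $L_D$ by the vertical one. First I would record this decomposition: cutting the $L$-shaped polygon along vertical lines, the points with $x\leq 1$ form the \emph{left cylinder} $C_l$, whose core curves are vertical of length $w$ and whose height (the transverse, horizontal extent) is $1$, while the points with $x>1$ form the \emph{right cylinder} $C_r$, whose core curves are vertical of length $1$ and whose height is $w$. One can double-check these two circumferences against the action of $A=\smatr{1}{0}{w}{1}$ recorded in \eqref{eqn:Ak} (the reductions modulo $w$ and modulo $1$ there). As recalled in \autoref{subsect:periodic}, $Q$ is periodic under $A$ exactly when its splitting ratio in the vertical cylinder containing it is rational.

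The computation of these splitting ratios is then the content of the proof. If $x\leq 1$, the height of $Q\in C_l$ in $C_l$ is $x$ and the height of $C_l$ is $1$, so $\sr_{C_l}(Q)=\frac{x}{1}=x_r+x_iw$; since $1$ and $w$ are linearly independent over $\QQ$, this is rational precisely when $x_i=0$, which is condition (1). If $x>1$, the height of $Q\in C_r$ in $C_r$ is $x-1$ and the height of $C_r$ is $w$, so $\sr_{C_r}(Q)=\frac{x-1}{w}$; multiplying numerator and denominator by $w$ and using $w^2=d$ gives $\sr_{C_r}(Q)=\frac{(x_r-1)w+x_id}{d}=x_i+\frac{x_r-1}{d}\,w$, which is rational precisely when $x_r=1$, which is condition (2).

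For the final assertion I would, in case (1), simply note $x_i=0\in[0,1)$, and in case (2) substitute $x_r=1$ into the range constraint $1\leq x<1+w$ valid for points of $C_r$: from $1\leq 1+x_iw<1+w$ we get $0\leq x_i<1$, exactly as in the closing computation of the proof of \autoref{lem:BPer}.

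I do not expect any serious obstacle: the proof is essentially a transcription of that of \autoref{lem:BPer}. The only points needing care are identifying the vertical cylinder decomposition correctly -- in particular which of $C_l$, $C_r$ has transverse extent $1$ and which has transverse extent $w$ -- and carrying out the rationalisation of $\frac{x-1}{w}$, which is what makes condition (2) come out in terms of $x_r$ rather than $x_i$.
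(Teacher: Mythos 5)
Your proposal is correct and is exactly what the paper intends: Lemma \ref{lem:APer} is stated there with the remark ``with similar calculations'' after the proof of \autoref{lem:BPer}, and your transcription to the vertical cylinder decomposition (left cylinder of transverse extent $1$, right cylinder of transverse extent $w$) together with the rationalisation $\frac{x-1}{w}=x_i+\frac{x_r-1}{d}w$ and the range argument $1< 1+x_iw<1+w$ is precisely that analogous computation.
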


\subsection{The Action of $A$ and $B$ in More Detail}\label{sect: technLemm}
To obtain some information about the structure of the Schreier graph from the action of $A^k$ and $B^l$ on a point $Q$, we will analyze how the absolute values of the coordinates change. We are looking for a way to measure the complexity of the connection points. By \autoref{prop: ConnPoints} the connection points are of the form $P=\PointL{x_r+x_iw}{y_r+y_iw}$ with $x_r,x_i,y_r,y_i\in\QQ$. Since the $x$-coordinate and the $y$-coordinate are bounded below by $0$ and above by $w$ and $1+w$, respectively, if the absolute value of an \ip \ grows, in most cases also the corresponding \rp 's absolute value grows. This is why our measure of the complexity is the sum of the \ip 's absolute values, which will be denoted by \[s(Q)\coloneqq |x_i|+|y_i|.\]
Recall that by \autoref{defi:NPN} for any connection point $P$ of the above form $N(P)$ is the least common denominator of the reduced fractions $x_r,x_i,y_r$ and $y_i$. Furthermore, the set of all connection points $P$ with fixed $N(P)=N$ is denoted by $\PN$.

Now we can formulate some quite technical lemmata concerning the growth behavior of connection points $P$ and $s(P)$ under the action of powers of $A$ and $B$. These will help to prove the non-amenability of the Schreier graph of $\SL(L_D)$ modulo $\SL(L_D;P)$ with respect to any finite generating set in \autoref{sect:concl}.

\begin{lem}\label{lem:APerPointsBigger}
If $Q=\PointL{x_r+x_i\om}{y_r+y_i\om}\in \PN$ is periodic under $A$ but not under $B$, there exists an $l_0$, such that for all $l$ with $|l|\geq l_0$ the following inequality holds:
\[s(Q)<s(B^{l}\circ Q).\]
\end{lem}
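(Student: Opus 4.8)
The plan is to use that $B=\smatr{1}{1+w}{0}{1}$ fixes the $y$-coordinate of every point, so the \ip\ $y_i$ is left unchanged and the asserted inequality $s(Q)<s(B^{l}\circ Q)$ is equivalent to the single statement $|x_i|<|x_{B^{l},i}|$. By definition of $\Delta_{B^{l}}$ we have $x_{B^{l},i}=x_i+\Delta_{B^{l}}(Q)$, hence $|x_{B^{l},i}|\geq|\Delta_{B^{l}}(Q)|-|x_i|$ by the triangle inequality. Thus it suffices to show $|\Delta_{B^{l}}(Q)|\to\infty$ as $|l|\to\infty$: once this is known, any $l_0$ with $|\Delta_{B^{l}}(Q)|>2|x_i|$ for all $|l|\geq l_0$ does the job.

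For the divergence of $|\Delta_{B^{l}}(Q)|$ I would invoke the explicit formula \eqref{eqn:DeltaBl}. Since $B^{l}$ preserves the cylinder containing $Q$ (the $y$-coordinate, and hence which of the cases $y\leq 1$, $y>1$ applies, does not change with $l$; cf.\ \eqref{eqn:Bl}), exactly one of the two cases is relevant for all $l$ simultaneously. If $y\leq 1$, then $Q$ is not periodic under $B$, so \autoref{lem:BPer}\,(1) forces $y_i\neq 0$; as $w\neq 1$ we get $y_i(1-w)\neq 0$, and $\Delta_{B^{l}}(Q)=l\,y_i(1-w)-r$ with $|r|<1$ gives $|\Delta_{B^{l}}(Q)|\geq|l|\,|y_i|\,|1-w|-1$, which tends to $\infty$. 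If $y>1$, then not being periodic under $B$ means by \autoref{lem:BPer}\,(2) that $y_r+y_i-1\neq 0$, so $|\Delta_{B^{l}}(Q)|=|l|\,|y_r+y_i-1|\to\infty$ as well.

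There is no serious obstacle here; the only point needing a little care is the bounded error term $r\in(-1,1)$ occurring in the first case, which is harmless precisely because its bound is independent of $l$ and so cannot absorb the linearly growing main term $l\,y_i(1-w)$. The conceptual content is simply that ``not periodic under $B$'' is exactly the condition making the coefficient of $l$ in the affine-in-$l$ expression $\Delta_{B^{l}}(Q)$ nonzero. I note in passing that the hypothesis that $Q$ is periodic under $A$ is not used in this argument; it is presumably recorded in the statement only because that is the configuration in which the lemma will later be applied.
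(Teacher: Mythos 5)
Your argument is correct for the lemma as literally stated, and its computational core is the same as the paper's: $B^l$ preserves the $y$-coordinate and hence the cylinder, so one splits into the cases $y\leq 1$ and $y>1$ of \eqref{eqn:DeltaBl}, uses \autoref{lem:BPer} to see that the coefficient of $l$ is nonzero, and notes that the bounded error $r\in(-1,1)$ cannot absorb the linearly growing main term; then $|\Delta_{B^l}(Q)|>2|x_i|$ forces $|x_{B^l,i}|>|x_i|$ and thus $s(B^l\circ Q)>s(Q)$. Where you genuinely diverge is in your closing remark that the $A$-periodicity hypothesis is unused: in the paper it is not decorative, and neither is the hypothesis $Q\in\PN$. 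The paper uses $A$-periodicity (via \autoref{lem:APer}, which gives $0\leq x_i<1$) to reduce the goal to the $Q$-independent condition $|\Delta_{B^l}(Q)|>2$, and uses $Q\in\PN$ (so $|y_i|\geq\frac{1}{N}$ in the lower cylinder, resp.\ $|y_r+y_i-1|\geq\frac{1}{N}$ in the upper one) to extract the explicit threshold $l_0=\max\bigl\{\frac{3N}{\om-1},\,2N+1\bigr\}$, which depends only on $N$ and $\om$ and not on the individual point. That uniformity is exactly what gets consumed later: in the proof of \autoref{ZweiMoeglZshgkomp} a single exponent $l>l_0$ (chosen as a multiple of $N$) must work simultaneously for all $A$-periodic, non-$B$-periodic points in the infinite orbit of $P$, and the explicit value of $l_0$ is quoted there. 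Your $l_0$ depends on $Q$ (through $|x_i|$ and through how small the coefficient of $l$ happens to be), so your proof establishes the displayed existence statement but would not support that later step without re-inserting precisely the two hypotheses you set aside; the paper's version buys the uniform, explicit bound at essentially no extra cost.
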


\begin{lem}\label{lem:BPerPointsBigger}
If $Q=\PointL{x_r+x_i\om}{y_r+y_i\om}\in\PN$ is periodic under $B$ but not under $A$, there exists a $k_0$, such that for all $k\in\ZZ$ with $|k|\geq k_0$ the following inequality holds: \[s(Q)<s(A^{k}\circ Q).\]
\end{lem}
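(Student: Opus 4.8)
The plan is to reduce the claim to showing that $|\Delta_{A^k}(Q)|\to\infty$ as $|k|\to\infty$. The key observation is that $A^k$ does not move the $x$-coordinate: by \eqref{eqn:Ak} we have $A^k\circ Q=\PointL{x}{y_{A^k}}$, so $x_{A^k,i}=x_i$ and therefore
\[s(A^k\circ Q)-s(Q)=|y_{A^k,i}|-|y_i|=\bigl|\,y_i+\Delta_{A^k}(Q)\,\bigr|-|y_i|.\]
Thus, as soon as $|\Delta_{A^k}(Q)|>2|y_i|$, the triangle inequality gives $|y_i+\Delta_{A^k}(Q)|\geq|\Delta_{A^k}(Q)|-|y_i|>|y_i|$, i.e.\ $s(A^k\circ Q)>s(Q)$; and since the bounds below will be linear in $|k|$, this is precisely a condition of the form $|k|\geq k_0$.

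Next I would split into the two branches of \eqref{eqn:DeltaAk} according to the vertical cylinder containing $Q$ (consistently for all $k$, since $A^k$ fixes $x$). If $Q$ lies in the right cylinder, i.e.\ $x>1$, then $\Delta_{A^k}(Q)=k(x_r-1)$; here $x_r\neq 1$, since otherwise \autoref{lem:APer}~(2) would force $Q$ to be periodic under $A$, contrary to hypothesis, so $|\Delta_{A^k}(Q)|=|k|\,|x_r-1|\to\infty$. If $Q$ lies in the left cylinder, i.e.\ $x\leq 1$, then \eqref{eqn:DeltaAk} provides, for every $k$, some $r\in(-1,1)$ with $\Delta_{A^k}(Q)=-kx_i\om-r$; here $x_i\neq 0$, since otherwise \autoref{lem:APer}~(1) would again make $Q$ periodic under $A$. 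As $\om>0$, the triangle inequality gives $|\Delta_{A^k}(Q)|\geq|k|\,|x_i|\,\om-1\to\infty$.

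In both cases $|\Delta_{A^k}(Q)|\geq c\,|k|-1$ for a constant $c>0$ (namely $c=|x_r-1|$, resp.\ $c=|x_i|\,\om$), so any integer $k_0>(2|y_i|+1)/c$ does the job. I do not expect a genuine obstacle here; the only slightly delicate points are that the hypothesis ``$Q$ not periodic under $A$'' is exactly what excludes the degenerate slopes $x_r=1$ and $x_i=0$ via \autoref{lem:APer}, and that in the left-cylinder case the bounded error term $r$ cannot absorb the growing main term $kx_i\om$. Note also that the hypothesis ``$Q$ periodic under $B$'' is not used in the estimate; it is included only because the lemma is applied together with its companion \autoref{lem:APerPointsBigger}.
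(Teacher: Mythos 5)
Your proof is correct, and its core mechanism is the same as the paper's: split according to which vertical cylinder contains $Q$, use the two branches of \eqref{eqn:DeltaAk}, rule out the degenerate cases $x_i=0$ and $x_r=1$ via \autoref{lem:APer} using non-periodicity under $A$, and let the term linear in $|k|$ dominate the bounded error $r$. The genuine difference lies in how the hypotheses are exploited. The paper \emph{does} use ``$Q$ periodic under $B$'': by \autoref{lem:BPer} it gives $0\leq y_i<1$, so the absolute threshold $|\Delta_{A^k}(Q)|>2$ already forces $s(Q)<s(A^k\circ Q)$; and it uses $Q\in\PN$ to get $|x_i|\geq \frac{1}{N}$ (left cylinder) resp. $|x_r-1|\geq\frac{1}{N}$ (right cylinder), which yields the explicit, point-independent value $k_0=\max\left\{\frac{3N}{w},2N+1\right\}$. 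Your threshold $|\Delta_{A^k}(Q)|>2|y_i|$ and the resulting $k_0>(2|y_i|+1)/c$ depend on the particular point $Q$. That suffices for the lemma as literally stated, so there is no gap in your argument; but the uniform-in-$N$ constant is precisely what is cited later (step 1 in the proof of \autoref{ZweiMoeglZshgkomp} requires $k$ and $l$ depending only on $N$ and $w$, not on the orbit point). Hence your closing remark that the $B$-periodicity hypothesis is ``not used'' describes your proof but not the paper's, and discarding it outright would cost the uniformity the application needs --- though your argument becomes equivalent to the paper's once you add the bound $|y_i|<1$ from \autoref{lem:BPer} and the $\frac{1}{N}$ estimates coming from $Q\in\PN$.
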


The proofs of these two lemmata are very similar and differ only by some numbers, since the surface $L_D$ is not symmetric with respect to the horizontal and vertical direction. For completeness we will nevertheless prove both lemmata in detail, because they are an important tool in the proof of \autoref{mainresult}.

\begin{proof}[Proof of \autoref{lem:APerPointsBigger}.]
Since $B^{l}$ does not change $y_i$, it increases $s(Q)$ if and only if it increases $|x_i|$. But we know the bounds $0\leq x_i<1$ for a point periodic under $A$ by \autoref{lem:APer}. Hence $s(Q)<s(B^{l}\circ Q)$ is guaranteed if
the absolute value of the change of $x_i$ by $B^{l}$ -- which is $|\Delta_{B^l}(Q)|$ -- is greater than $2$. We will show the existence of $l_0$ for $Q$ in the lower and upper cylinder separately:

If $Q$ is in the lower cylinder $C_d$, \eqref{eqn:DeltaBl} states that $B^{l}$ changes the \ip\ of $x$ by 
	\[\Delta_{B^l}(Q)= ly_i(1-\om)-r \textnormal{ for a } r\in(-1,1).\]
	By the reverse triangle inequality we obtain $|\Delta_{B^l}(Q)|=|ly_i(1-\om)-r|\geq |ly_i(1-\om)|-|r|$ and $|ly_i(1-\om)|\geq 3$ would imply $|\Delta_{B^l}(Q)|>2.$ Since $Q$ is not periodic under $B$, we know $y_i\neq 0$ (\autoref{lem:BPer}) and thus $|y_i|\geq \frac{1}{N}$.
	Thus we can choose \[l_{0,d}=\frac{3N}{\om-1}\] and we obtain
	$|l| \geq \frac{3N}{\om-1}\geq \frac{3}{(\om-1)|y_i|}$ for all $l$ with $|l|\geq l_{0,d}$. This finally implies the estimate $|ly_i(1-\om)|=|l|(\om-1)|y_i|\geq 3$ as required.
							
If $Q$ is in the upper cylinder $C_u$, by \eqref{eqn:DeltaBl} $\Delta_{B^l}(Q)$ equals $l (y_r + y_i - 1)$. Since $Q$ is not periodic under $B$, we know $y_r+y_i-1\neq 0$ (\autoref{lem:BPer}) and thus $|y_r+y_i-1|\geq \frac{1}{N}$. Then \[l_{0,u}=2N+1\] implies $|\Delta_{B^l}(Q)|\geq |l|\frac{1}{N}> 2$ for all $l$ with $|l|\geq l_{0,u}$.

Hence we get the statement of the lemma with \[l_0=\max\{l_{0,d},l_{0,u}\}=\max\left\{\frac{3N}{\om-1},2N+1\right\}.\]
\end{proof}

\begin{proof}[Proof of \autoref{lem:BPerPointsBigger}.]
By analogy with the proof of \autoref{lem:APerPointsBigger} this lemma is certainly true if $|\Delta_{A^k}(Q)|>2$. We will show the existence of $k_0$ for $Q$ in the left and right cylinder separately.

If $Q$ is in the left cylinder $C_l$, \eqref{eqn:DeltaAk} states that $A^{k}$ changes the \ip\ of $y$ by 
	\[\Delta_{A^k}(Q)= -kx_i\om-r \textnormal{ for an } r\in(-1,1).\]
	From the reverse triangle inequality $|\Delta_{A^k}(Q)|=|-kx_i\om-r|\geq |kx_i\om|-|r|$ follows and $|kx_iw|\geq 3$ would imply $|\Delta_{A^k}(Q)|>2.$ Since $Q$ is not periodic under $A$, we know $x_i\neq 0$ (\autoref{lem:APer}) and thus $|x_i|\geq \frac{1}{N}$.
	Thus we can choose \[k_{0,l}=\frac{3N}{w}\] and obtain
	$|k| \geq \frac{3N}{w}\geq \frac{3}{|x_i|w}$ for all $k$ with $|k|\geq k_{0,l}$. This implies
	$|kx_iw|=|k|w|x_i|\geq 3$ as desired.
	
	If $Q$ is in the right cylinder, $\Delta_{A^k}(Q)=k (x_r - 1)$ by \eqref{eqn:DeltaAk}. Since $Q$ is not periodic under $A$, we know $x_r-1\neq 0$ (\autoref{lem:APer}) and thus $|x_r-1|\geq \frac{1}{N}$. Then \[k_{0,r}=2N+1\] implies for all $k$ with $|k|\geq k_{0,r}$, that $|\Delta_{A^k}(Q)|\geq |k|\frac{1}{N}> 2$ holds.

Hence we get the statement of the lemma with \[k_0=\max\{k_{0,l},k_{0,r}\}=\max\left\{\frac{3N}{w},2N+1\right\}.\]
\end{proof}

The next two lemmata will help us to prove \autoref{lem:DreiVonVierLemma}. An important fact in the proof is that $|a\pm b|>|b|$ implies $\sgn(a\pm b)=\sgn(a)$.

\begin{lem}\label{lem:pmdifA}
If $Q=\PointL{x_r+x_i\om}{y_r+y_i\om}\in\PN$ is a point not periodic under $A$, then the signs of $\Delta_{A^k}(Q)$ and $\Delta_{A^{-k}}(Q)$ are different for all $k>k_0$.
\end{lem}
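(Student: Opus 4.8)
The plan is to prove the statement by splitting into the two cases of \autoref{lem:APer}: either $Q$ lies in the left cylinder ($x\le 1$), where non-periodicity under $A$ means $x_i\ne 0$, or $Q$ lies in the right cylinder ($x>1$), where it means $x_r\ne 1$. In both cases I would read off the relevant formula for $\Delta_{A^{k}}(Q)$ from \eqref{eqn:DeltaAk} and then apply the sign principle recalled just before the lemma: $|a\pm b|>|b|$ forces $\sgn(a\pm b)=\sgn(a)$.

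I would first dispose of the right-cylinder case, which is the easy one and in fact needs no restriction on $k$: by \eqref{eqn:DeltaAk} we have $\Delta_{A^{k}}(Q)=k(x_r-1)$ and $\Delta_{A^{-k}}(Q)=-k(x_r-1)$, so for $k>0$ the two signs are $\sgn(x_r-1)$ and $-\sgn(x_r-1)$, which are opposite since $x_r-1\ne 0$ by \autoref{lem:APer}. For the left-cylinder case I would use that \eqref{eqn:DeltaAk} gives $\Delta_{A^{k}}(Q)=-kx_i\om-r_k$ with $r_k\in(-1,1)$, where I write $r_k$ to emphasise that this bounded term depends on $k$ (it is essentially the difference of the splitting ratios of $Q$ and $A^k\circ Q$). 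Since $Q$ is not periodic under $A$ we have $x_i\ne 0$, hence $|x_i|\ge\tfrac1N$, so $|kx_i\om|\ge\tfrac{|k|\om}{N}$, which exceeds $3$ as soon as $|k|>k_0$ for any $k_0\ge\tfrac{3N}{\om}$; in particular the $k_0$ from \autoref{lem:BPerPointsBigger} works. For such $k>k_0$ the linear term $-kx_i\om$ dominates $r_k$ (respectively $r_{-k}$), so the sign principle yields $\sgn(\Delta_{A^{k}}(Q))=\sgn(-kx_i\om)=-\sgn(x_i)$ and $\sgn(\Delta_{A^{-k}}(Q))=\sgn(kx_i\om)=\sgn(x_i)$, which are opposite because $x_i\ne 0$.

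I do not expect a genuine obstacle: the lemma falls out of the explicit formulas \eqref{eqn:DeltaAk} combined with the uniform bound $|r|<1$ on the error term. The one point I would be careful to spell out is that in the left-cylinder case the error terms appearing in $\Delta_{A^{k}}(Q)$ and in $\Delta_{A^{-k}}(Q)$ are a priori two different numbers in $(-1,1)$, so one cannot factor out a common $r$ and must argue the two signs separately; this is harmless once $|k|>k_0$, since both errors are bounded by $1$ while the main term grows linearly in $|k|$.
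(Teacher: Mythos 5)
Your proposal is correct and follows essentially the same route as the paper: the right-cylinder case is immediate from $\Delta_{A^{\pm k}}(Q)=\pm k(x_r-1)$ with $x_r\neq 1$, and in the left-cylinder case the bound $|x_i|\geq\frac{1}{N}$ makes the linear term dominate the error term in $(-1,1)$ for $k>k_0$, so the sign principle gives opposite signs $\mp\sgn(x_i)$. Your explicit remark that the two error terms are different numbers in $(-1,1)$ matches the paper's use of $r$ and $r'$, so there is nothing to add.
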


\begin{proof}
If $Q$ is in the right cylinder, $\Delta_{A^{\pm k}}(Q)=\pm k(x_r-1)$. Since $Q$ is not periodic under $A$, we have $x_r\neq 1$ and  $k(x_r-1)$ and $-k(x_r-1)$ have different signs.

If the point $Q$ is in the left cylinder, $\Delta_{A^{+k}}(Q)=-kx_i\om-r$ for some $r\in(-1,1)$ and $\Delta_{A^{-k}}(Q)=kx_i\om+r'$ for an $r'\in(-1,1)$. As seen in the proof of \autoref{lem:APerPointsBigger}, the absolute value of these changes is greater than $2$ for $k>k_0$, hence $|-kx_i\om-r|$ and $|kx_i\om+r'|$ are bigger than $|r|$ and $|r'|$. Thus the first term -- $\Delta_{A^{+k}}(Q)$ -- has sign $\sgn(-kx_i)$, which is not $0$ because $Q$ is not periodic under $A$, whereas the second one -- $\Delta_{A^{-k}}(Q)$ -- has sign $\sgn(kx_i)=-\sgn(-kx_i)$.
\end{proof}

\begin{lem}\label{lem:pmdifB}
If $Q=\PointL{x_r+x_i\om}{y_r+y_i\om}\in\PN$ is a point not periodic under $B$, then the signs of $\Delta_{B^l}(Q)$ and $\Delta_{B^{-l}}(Q)$ are different for all $l>l_0$.
\end{lem}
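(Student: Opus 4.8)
The plan is to mimic exactly the structure of the proof of \autoref{lem:pmdifA}, splitting according to which cylinder $Q$ lies in, and using the formulas for $\Delta_{B^l}(Q)$ collected in \eqref{eqn:DeltaBl} together with the observation that $|a\pm b|>|b|$ forces $\sgn(a\pm b)=\sgn(a)$. First I would note that, by \autoref{lem:BPer}, $Q$ not being periodic under $B$ means that in the lower cylinder $y_i\neq 0$ and in the upper cylinder $y_r+y_i-1\neq 0$; either way the relevant quantity is nonzero, which is what allows us to talk about its sign.

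If $Q$ lies in the upper cylinder $C_u$, then \eqref{eqn:DeltaBl} gives $\Delta_{B^{\pm l}}(Q)=\pm l(y_r+y_i-1)$, and since $y_r+y_i-1\neq 0$ the two terms $l(y_r+y_i-1)$ and $-l(y_r+y_i-1)$ visibly have opposite signs for every $l>0$ (in particular for all $l>l_0$). If $Q$ lies in the lower cylinder $C_d$, then \eqref{eqn:DeltaBl} gives $\Delta_{B^{+l}}(Q)=ly_i(1-\om)-r$ for some $r\in(-1,1)$ and $\Delta_{B^{-l}}(Q)=-ly_i(1-\om)+r'$ for some $r'\in(-1,1)$. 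By the computation in the proof of \autoref{lem:APerPointsBigger}, for $l>l_0$ we have $|ly_i(1-\om)|\geq 3$, so $|ly_i(1-\om)-r|>|r|$ and $|{-ly_i(1-\om)}+r'|>|r'|$; hence $\Delta_{B^{+l}}(Q)$ has sign $\sgn(ly_i(1-\om))=\sgn(-ly_i)$ (using $\om>1$), which is nonzero since $y_i\neq 0$, while $\Delta_{B^{-l}}(Q)$ has sign $\sgn(-ly_i(1-\om))=\sgn(ly_i)=-\sgn(-ly_i)$. In both cases the signs differ, which is the claim.

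There is no real obstacle here — this lemma is the exact mirror image of \autoref{lem:pmdifA} with the roles of the horizontal and vertical directions interchanged, and the only care needed is to invoke the correct bound ($l_0$ from \autoref{lem:APerPointsBigger} rather than $k_0$ from \autoref{lem:BPerPointsBigger}) so that the error terms $r,r'\in(-1,1)$ are genuinely dominated, and to keep track of the sign of $1-\om$ (which is negative, since $\om=\sqrt{D/4}>1$) when reading off $\sgn\Delta_{B^{+l}}(Q)$. The write-up should therefore be short: state the two cases, quote \eqref{eqn:DeltaBl} and the absolute-value estimate already established, and conclude.
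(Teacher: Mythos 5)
Your proof is correct and is exactly the argument the paper intends: the paper omits the proof of this lemma, stating it is completely analogous to that of \autoref{lem:pmdifA}, and your write-up is precisely that analogue (upper cylinder via $\Delta_{B^{\pm l}}(Q)=\pm l(y_r+y_i-1)\neq 0$, lower cylinder via the bound $|ly_i(1-\om)|\geq 3$ for $l>l_0$ from the proof of \autoref{lem:APerPointsBigger}, so the error terms $r,r'$ cannot flip the sign). Your bookkeeping of which threshold ($l_0$, not $k_0$) and of the sign of $1-\om$ is also correct, so nothing is missing.
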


We will skip the proof of this lemma, since it works completely analogically to the proof of \autoref{lem:pmdifA}.

The last and most important lemma concerning the behavior of $s(Q)$ under the action of powers of $A$ and $B$ is the following:
\begin{lem}\label{lem:DreiVonVierLemma}
Let $Q=\PointL{x_r+x_i\om}{y_r+y_i\om}\in\PN$ be a point periodic neither under $A$ nor under $B$. Then there are numbers $k_1$ and $l_1$ such that for all pairs $(k,l)$ with $k>k_1$ and $l>l_1$ at least three of the following four inequalities hold:
\begin{align*}
s(Q) &< s(A^{k}\circ Q),\\
s(Q) &< s(A^{-k}\circ Q),\\
s(Q) &< s(B^{l}\circ Q),\\
s(Q) &< s(B^{-l}\circ Q).
\end{align*}
\end{lem}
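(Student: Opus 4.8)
The plan is to reduce each of the four inequalities to a one–variable statement of the shape ``adding an offset $\Delta$ to a fixed real number strictly increases its absolute value''. Indeed, by \eqref{eqn:Bl} the element $B^{l}$ fixes $y_i$ and shifts $x_i$ by exactly $\Delta_{B^{l}}(Q)$, while by \eqref{eqn:Ak} the element $A^{k}$ fixes $x_i$ and shifts $y_i$ by exactly $\Delta_{A^{k}}(Q)$; hence $s(A^{\pm k}\circ Q)=|x_i|+\bigl|y_i+\Delta_{A^{\pm k}}(Q)\bigr|$ and $s(B^{\pm l}\circ Q)=\bigl|x_i+\Delta_{B^{\pm l}}(Q)\bigr|+|y_i|$. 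The strategy is then to force \emph{both} $A$-inequalities by a magnitude argument and \emph{at least one} $B$-inequality by a sign argument; these two facts together already yield three of the four.

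For the $A$-pair: since $Q$ is not periodic under $A$, \autoref{lem:APer} gives $x_i\neq 0$ when $Q$ lies in the left vertical cylinder and $x_r\neq 1$ when it lies in the right one, so by \eqref{eqn:DeltaAk} both $|\Delta_{A^{k}}(Q)|$ and $|\Delta_{A^{-k}}(Q)|$ grow without bound as $k\to\infty$ — this is the estimate already carried out in the proof of \autoref{lem:BPerPointsBigger}, run with the constant $2|y_i|+1$ in place of $3$. Pick $k_1$ so large that $|\Delta_{A^{\pm k}}(Q)|>2|y_i|$ for all $k>k_1$. Then the reverse triangle inequality yields $\bigl|y_i+\Delta_{A^{\pm k}}(Q)\bigr|\geq|\Delta_{A^{\pm k}}(Q)|-|y_i|>|y_i|$, so $s(Q)<s(A^{k}\circ Q)$ and $s(Q)<s(A^{-k}\circ Q)$ for every $k>k_1$.

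For the $B$-pair: since $Q$ is not periodic under $B$, \autoref{lem:pmdifB} tells us that for $l>l_0$ the numbers $\Delta_{B^{l}}(Q)$ and $\Delta_{B^{-l}}(Q)$ are nonzero and of opposite sign. If $x_i=0$ then both $s(B^{\pm l}\circ Q)=|\Delta_{B^{\pm l}}(Q)|+|y_i|>s(Q)$; if $x_i\neq 0$ then exactly one of $\Delta_{B^{l}}(Q),\Delta_{B^{-l}}(Q)$ has the same sign as $x_i$, and for that one $\bigl|x_i+\Delta_{B^{\pm l}}(Q)\bigr|=|x_i|+|\Delta_{B^{\pm l}}(Q)|>|x_i|$. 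Either way, at least one of the last two inequalities holds once $l>l_1\coloneqq l_0$. Combining with the previous paragraph, for every pair $(k,l)$ with $k>k_1$ and $l>l_1$ at least three of the four inequalities are satisfied. (The very same magnitude argument applied to the $B$-pair would in fact give all four, but three is all that is needed here.)

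The work is essentially bookkeeping; the one point that needs care is that the unboundedness of $|\Delta_{A^{\pm k}}(Q)|$ must be verified in \emph{both} the left and the right vertical cylinder — which is precisely where the hypothesis ``$Q$ not periodic under $A$'' enters — and, symmetrically, that the sign alternation on the $B$-side is where ``$Q$ not periodic under $B$'' enters, so that both non-periodicity hypotheses are used, each on its own coordinate axis. The degenerate cases $x_i=0$ and $y_i=0$ have to be kept in sight but cause no trouble.
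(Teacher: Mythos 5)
Your argument does prove the lemma as literally worded: for a \emph{fixed} $Q$ the shifts $\Delta_{A^{\pm k}}(Q)$ grow linearly in $k$ with nonzero rate (since $Q$ is not $A$-periodic), so some $k_1(Q)$ makes $|\Delta_{A^{\pm k}}(Q)|>2|y_i|$ and forces both $A$-inequalities, while the sign alternation of \autoref{lem:pmdifB} gives at least one $B$-inequality for $l>l_0$; the sign half of this coincides with the first step of the paper's proof (\autoref{lem:pmdifA} and \autoref{lem:pmdifB}). The genuine gap is uniformity: the thresholds are meant to depend only on $N$ and $\om$, uniformly over all $Q\in\PN$, because in the proof of \autoref{ZweiMoeglZshgkomp} a single pair $(k,l)$ is fixed once (a multiple of $N$ exceeding $k_1$, $l_1$) and must satisfy the three-out-of-four conclusion at \emph{every} non-periodic vertex of $G''$ simultaneously. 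Your $k_1$ cannot be made uniform: in the left cylinder $|\Delta_{A^{\pm k}}(Q)|$ is roughly $k|x_i|\om$ with $|x_i|$ possibly as small as $\frac{1}{N}$, while $|y_i|$ is unbounded on $\PN$ (the rational parts compensate so that $0\leq y<\om$ even when $|y_i|$ is huge), so $|\Delta_{A^{\pm k}}(Q)|>2|y_i|$ forces $k$ of order $N|y_i|$, which no bound in terms of $N$ and $\om$ alone can accommodate.

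This is precisely why the paper does not try to force both $A$-inequalities in all situations. It splits into the cases $|x_i|\geq|y_i|$ and $|x_i|<|y_i|$ and demands the magnitude estimate $|\Delta|>2\max(|x_i|,|y_i|)$ only in the pair whose shift is driven by the larger irrational part: if $|x_i|\geq|y_i|$, then in the left cylinder $|\Delta_{A^k}(Q)|\geq |k||x_i|\om-1$ beats $2|x_i|$ once $|k|\geq\frac{2+N}{\om}$, and in the right cylinder $|x_r-1|\geq |x_i|\om-\om$ beats $2|x_i|$ for $|k|\geq\frac{2(N+1)}{\om}$ when $|x_i|\geq 1+\frac{1}{N}$, the remaining case $|x_i|\leq 1$ being covered by $|\Delta_{A^k}(Q)|>2$ for $|k|>k_0$; symmetrically with $B$ when $|x_i|<|y_i|$. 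This yields $k_1=\max\{k_0,\frac{2(N+1)}{\om}\}$ and $l_1=\max\{l_0,\frac{2(N+1)}{\om-1}\}$ depending only on $N$ and $\om$. To make your proof serve the lemma's actual purpose you would have to build in this case distinction (or an equivalent device); as written, your constants degenerate with the point, and the uniform statement that is used downstream is not established.
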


\begin{proof}
As a first observation, we see that since $A$ does not change $x_i$ and $B$ does not change $y_i$, we only have to look at the effect of $A^{\pm k}$ on $|y_i|$ and of $B^{\pm l}$ on $|x_i|$ to see the effect on $s(Q)=|x_i|+|y_i|$.

The second step is to apply \autoref{lem:pmdifA} and \autoref{lem:pmdifB}: Since the changes of $y_i$ by $A^k$ and by $A^{-k}$ have different signs, one of them has the same sign as $y_i$ itself and thus the corresponding $A^{\pm k}$ increases $|y_i|$. By analogy one of $B^{\pm l}$ has $\sgn{\Delta_{B^{\pm l}}(Q)}=\sgn(x_i)$ and thus increases $|x_i|$.
Hence at least two of the four inequalities hold. For the third one we have to show that one of the two remaining $\Delta$'s is big enough to increase $|y_i|$ respectively $|x_i|$ even though their sign is different from the sign of $y_i$ respectively $x_i$. This means it has to be greater than $2|y_i|$ respectively $2|x_i|$.

If $|x_i|\geq |y_i|$ this will be $\Delta_{A^k}(Q)$ for $k$ with $|k|$ big enough, otherwise it will be $\Delta_{B^l}(Q)$ for $l$ with $|l|$ big enough. To finish the proof, we distinguish these two cases.

{\bf First Case} $|x_i|\geq |y_i|$: Let us first find a $k_1$ with the property that 
for all $k\in \ZZ$ with $|k|>k_1$ the inequality $|\Delta_{A^k}(Q)|>2|x_i|$ holds. In particular for points with $|x_i|\geq |y_i|$ this inequality implies $|y_{A^k,i}|=|y_i+\Delta_{A^k}(Q)|>|x_i|\geq |y_i|$.

We show this for $Q$ in the left or in the right cylinder seperately:

If $Q$ is in the left cylinder, $\Delta_{A^k}(Q)=-kx_i\om-r$ for an $r\in(-1,1)$. With the reversed triangle inequality we get $|-kx_i\om-r|>2|x_i|\textnormal{, if } |k| |x_i| \om > 2|x_i|+|r|$. This again would be implied by $|k| |x_i| \om \geq 2 |x_i| + 1$, which is equivalent to $|k|\geq \frac{2}{\om}+\frac{1}{|x_i|\om}$. Since $Q$ is not periodic under $A$, we know (from \autoref{lem:APer}) that $x_i\neq 0$ and thus $|x_i|\geq \frac{1}{N}$. Hence the above inequality is fulfilled if $|k|\geq \frac{2+N}{\om}$.

If $Q$ is in the right cylinder, the $x$-coordinate satisfies $1<x_r+x_iw<1+w$ and therefore there is a $q\in(0,\om)$, such that $x_r=-x_i\om+1+q$. The change of $y_i$ is $\Delta_{A^k}(Q)=k(x_r-1)$. Using the reversed triangle inequality we obtain $|\Delta_{A^k}|=|k| \cdot |-x_i\om+q|\geq |k| (|x_i\om|-|q|)>|k|(|x_i|\om-\om)$. Hence $|\Delta_{A^k}(Q)|>2|x_i|$, if $|k|\geq \frac{2}{\om}\cdot\frac{|x_i|}{|x_i|-1}$. This term decreases for $|x_i|>1$.

The smallest possible value for $|x_i|>1$ is $|x_i|=1+\frac{1}{N}$. Thus for points $Q$ with $|x_i|>1$ the inequality $|\Delta_{A^k}(Q)|>2|x_i|$ is guaranteed, if $|k|\geq\frac{2(N+1)}{\om}$.

For points $Q$ with $|x_i|\leq 1$ already $|k|>k_0$ suffices, because the same computations as in the proof of \autoref{lem:BPerPointsBigger} show $|\Delta_{A^k}(Q)|>2\geq 2 |x_i|$ for all points $Q$ not periodic under $A$ and all $k$ with $|k|>k_0$.

Summarizing, $|\Delta_{A^k}(Q)|>2|x_i|$ holds for all $k$ with $|k|>k_1$, where \[k_1= \max\left\{\frac{2+N}{\om},k_0,\frac{2(N+1)}{\om}\right\},\] where $\frac{2+N}{\om}$ can be omitted because it is always smaller than $\frac{2(N+1)}{\om}$.

{\bf Second Case} $|x_i|< |y_i|$: we have to find an $l_1$ satisfying $|\Delta_{B^l}(Q)|>2|y_i|$ for all $l\in \ZZ$ with $|l|>l_1$. In particular, for points with $|x_i|< |y_i|$ this inequality implies $|x_{B^l,i}|=|x_i+\Delta_{B^l}(Q)|>|y_i|> |x_i|$.

We show this for $Q$ in the lower or in the upper cylinder seperately:

If $Q$ is in the lower cylinder, $|\Delta_{B^l}|$ is greater than $|ly_i(1-\om)|-1$ by the same arguments as in the proof of \autoref{lem:APerPointsBigger}. Therefore, it is greater than $2|y_i|$, if $|l|(\om-1)|y_i|\geq 2|y_i|+1$, and this is equivalent to $|l|\geq \frac{2}{\om-1}+\frac{1}{(\om-1)|y_i|}$. Since $Q$ is in the lower cylinder but not periodic under $B$, we know that $y_i\neq 0$ and thus $|y_i|\geq \frac{1}{N}$. Hence the inequality holds if $|l|\geq\frac{2+N}{\om-1}$.

If $Q$ is in the upper cylinder, the $y$-coordinate satisfies $1<y_r+y_iw<w$ and therefore there is a $q\in(0,\om-1)$, such that $y_r=-y_i\om+1+q$. The change of $x_i$ is $\Delta_{B^l}(Q)=l(y_r+y_i-1)$. Again, the reversed triangle inequality implies $|\Delta_{B^l}|=|l| \cdot |(y_i(1-\om)+q)|\geq |l| (|y_i(1-\om)|-|q|)>|l|((\om-1)|y_i|-(\om-1))$. Hence $|\Delta_{B^l}(Q)|>2|y_i|$, if $|l|\geq \frac{2}{\om-1}\cdot \frac{|y_i|}{|y_i|-1}$. This term is decreasing for $|y_i|>1$.

The smallest possible value for $|y_i|>1$ is $|y_i|=1+\frac{1}{N}$. Thus for points $Q$ with $|y_i|>1$ the inequality $|\Delta_{B^l}(Q)|>2|y_i|$ is guaranteed, if $|l|\geq\frac{2(N+1)}{\om-1}$.
For points $Q$ with $|y_i|\leq 1$, already $|l|>l_0$ suffices, because the same computations as in the proof of \autoref{lem:APerPointsBigger} show $|\Delta_{B^l}(Q)|>2\geq 2 |y_i|$ for all points $Q$ not periodic under $B$ and all $l$ with $|l|>l_0$.

Summarizing $|\Delta_{B^l}(Q)|>2|y_i|$ holds for all $l$ with $|l|>l_1$, where \[l_1= \max\{\frac{2+N}{\om-1},l_0,\frac{2(N+1)}{\om-1}\},\] where $\frac{2+N}{\om-1}$ can be omitted because it is always smaller than $\frac{2(N+1)}{\om-1}$.

All in all we proved that for all points $Q$ with $|x_i|\geq |y_i|$ the following inequalities hold for all pairs $(k,l)$ with $k>k_1$ and $l>l_1$:
\begin{align*}
 & s(Q) < s(A^{k}\circ Q),\\
 & s(Q) < s(A^{-k}\circ Q),\\
\textnormal{at least one of } & s(Q) < s(B^{l}\circ Q) \textnormal{ and } s(Q) < s(B^{-l}\circ Q).
\end{align*}

For points $Q$ with $|x_i|< |y_i|$ the following inequalities hold for all pairs $(k,l)$ with $k>k_1$ and $l>l_1$:
\begin{align*}
 & s(Q) < s(B^{l}\circ Q),\\
 & s(Q) < s(B^{-l}\circ Q),\\
\textnormal{at least one of } & s(Q) < s(A^{k}\circ Q) \textnormal{ and } s(Q) < s(A^{-k}\circ Q).
\end{align*}
\end{proof}

\subsection{Analogous Results for $L_{D,\pm 1}$}\label{subs: LDpm}
As mentioned above all results of this section also hold for the surfaces $L_{D,+1}$ and $L_{D,-1}$ -- one just has to replace some numbers and values. We now list these numbers.
\subsubsection{The Surface $L_{D,+1}$}
The vertical respectively horizontal elements of $\SL(L_{D,+1})$ are $A_+=\smatr{1}{0}{w-1}{1}$ respectively $B_+=\smatr{1}{1+w}{0}{1}$. A point $Q=\PointL{x}{y}=\PointL{x_r+x_iw}{y_r+y_iw}\in L_{D,+1}$ is periodic under the action of $A_+$ if and only if one of the following two conditions holds:
\begin{enumerate}
\item $x\leq 1$ and $x_i=0$.
\item $x>1$ and $x_r=1$.
\end{enumerate}
In particular if $Q$ is periodic under $A_+$ then $0\leq x_i<1$. 

The point $Q\in L_{D,+1}$ is periodic under the action of $B_+$ if and only if one of the following two conditions holds:
\begin{enumerate}
\item $y\leq 1$ and $y_i=0$.
\item $y>1$ and $y_r+2y_i=1$.
\end{enumerate}
In particular if $Q$ is periodic under $B_+$ then $0\leq y_i<1$.

The analogous equation to \eqref{eqn:Ak} is
\begin{equation*}
A_+^k\circ \PointL{x}{y}=
\begin{cases}
\PointL{x}{y+k x(\om-1) \mod (\om-1)} & \text{ if } x\leq 1\\
\PointL{x}{y+ k (x-1)(w-1) \mod 1} & \textnormal{ if } x>1.
\end{cases}
\end{equation*}

This yields 
\begin{equation*}
\Delta_{A_+^k}(Q)=\begin{cases}
-kx_iw -r \text{ for an } r\in (-1,1) & \text{ if } x\leq 1\\
k(x_r  - 1) & \textnormal{ if } x>1.
\end{cases}
\end{equation*}

Accordingly, \eqref{eqn:Bl} becomes
\begin{equation*}
B_+^l\circ \PointL{x}{y}=
\begin{cases}
\PointL{x+l y(1+w) \mod (1+w)}{y} & \text{ if } y\leq 1\\
\PointL{x+ l (y-1)(1+w) \mod 1}{y} & \textnormal{ if } y>1.
\end{cases}
\end{equation*}

This yields 
\begin{equation*}
\Delta_{B_+^l}(Q)=\begin{cases}
-ly_i(2-w) -r \text{ for an } r\in (-1,1) & \text{ if } y\leq 1\\
l(2y_i+y_r - 1) & \textnormal{ if } y>1.
\end{cases}
\end{equation*}

For $L_{D,+1}$ the numbers $k_0,l_0,k_1$ and $l_1$ from \autoref{lem:APerPointsBigger} to \autoref{lem:DreiVonVierLemma} are 
\begin{align*}
k_0&=\max\left\{\frac{3N}{w},2N+1\right\}, \\
l_0&=\max\left\{\frac{3N}{2-w},2N+1\right\},\\
k_1&=\max\left\{\frac{2+N}{w},k_0,\frac{2(N+1)}{w}\right\}, \\
l_1&= \max\{\frac{2+N}{w-2},l_0,\frac{2(N+1)}{w-2}\}.
\end{align*}

\subsubsection{The Surface $L_{D,-1}$}
The vertical respectively horizontal elements of $\SL(L_{D,-1})$ are $A_-=\smatr{1}{0}{w}{1}$ respectively $B_-=\smatr{1}{w}{0}{1}$. A point $Q=\PointL{x}{y}=\PointL{x_r+x_iw}{y_r+y_iw}\in L_{D,-1}$ is periodic under the action of $A_-$ if and only if one of the following two conditions holds:
\begin{enumerate}
\item $x\leq 1$ and $x_i=0$.
\item $x>1$ and $x_r+x_i=1$.
\end{enumerate}
In particular if $Q$ is periodic under $A_-$ then $0\leq x_i<1$. 

The point $Q\in L_{D,-1}$ is periodic under the action of $B_-$ if and only if one of the following two conditions holds:
\begin{enumerate}
\item $y\leq 1$ and $y_i=0$.
\item $y>1$ and $y_r+y_i=1$.
\end{enumerate}
In particular if $Q$ is periodic under $B_-$ then $0\leq y_i<1$.

The analogous equation to \eqref{eqn:Ak} is
\begin{equation*}
A_-^k\circ \PointL{x}{y}=
\begin{cases}
\PointL{x}{y+k x w \mod w} & \text{ if } x\leq 1\\
\PointL{x}{y+ k (x-1)w \mod 1} & \textnormal{ if } x>1.
\end{cases}
\end{equation*}

This yields 
\begin{equation*}
\Delta_{A_-^k}(Q)=\begin{cases}
-kx_i(w-1) -r \text{ for an } r\in (-1,1) & \text{ if } x\leq 1\\
k(x_r + x_i  - 1) & \textnormal{ if } x>1.
\end{cases}
\end{equation*}

Accordingly, \eqref{eqn:Bl} becomes
\begin{equation*}
B_-^l\circ \PointL{x}{y}=
\begin{cases}
\PointL{x+l y w \mod w}{y} & \text{ if } y\leq 1\\
\PointL{x+ l (y-1)w \mod 1}{y} & \textnormal{ if } y>1.
\end{cases}
\end{equation*}

This yields 
\begin{equation*}
\Delta_{B_-^l}(Q)=\begin{cases}
-ly_i(w-1) -r \text{ for an } r\in (-1,1) & \text{ if } y\leq 1\\
l(y_r+y_i - 1) & \textnormal{ if } y>1.
\end{cases}
\end{equation*}

For $L_{D,-1}$ the numbers $k_0,l_0,k_1$ and $l_1$ from \autoref{lem:APerPointsBigger} to \autoref{lem:DreiVonVierLemma} are 
\begin{align*}
k_0&=l_0=\max\left\{\frac{3N}{w-1},2N+1\right\}, \\
k_1&=l_1=\max\{\frac{2+N}{\om-1},l_0,\frac{2(N+1)}{\om-1}\}. \\
\end{align*}


\section{Veech Groups with $\frac{1}{2}<\delta< 1$}\label{sect:concl}
After we provided the auxiliary material in the previous sections in this section we can reap the fruit of our labor and prove our main theorems:

\begin{prop}\label{mainprop}
For every non-periodic connection point $P$ on the Veech surface $L_{D,\epsilon}$ (with $D$ not a square) the critical exponent of the (infinitely generated) stabilizer subgroup $\SL(L_{D,\epsilon};P):=\Stab_{\SL(L_{D,\epsilon})}(P)$ is strictly between $\frac{1}{2}$ and $1$.
\end{prop}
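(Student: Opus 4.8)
The plan is to combine the two ``halves'' of the paper: the analytic bound from \autoref{prop: PattersonSullivan} and \autoref{prop: Tapie} on one side, and the combinatorial non-amenability statement that the preceding technical lemmata are designed to produce on the other. First I would establish the lower bound $\delta(\SL(L_{D,\epsilon};P))>\frac12$: by \autoref{lem:LattOrInfGen} the group $\SL(L_{D,\epsilon};P)$ is infinitely generated and of the first kind (it contains the parabolic elements fixing saddle connections through the connection point $P$, as recalled in the discussion preceding \autoref{prop:commens}), in particular it is non-elementary and contains a parabolic element, so \autoref{thm:BeaPat} gives $\delta>\frac12$ immediately. The non-trivial direction is $\delta<1$.

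For $\delta<1$, by \autoref{prop: Tapie} it suffices to exhibit one finite generating set $S$ of $\Gamma\coloneqq\SL(L_{D,\epsilon})$ for which the Schreier graph $G_{\Gamma,\Gamma_P,S}$ is non-amenable; by \autoref{IndependFromGS} the choice of $S$ is irrelevant, so I am free to pick a convenient one. I would take $S$ to contain the two parabolics $A$ and $B$ from \autoref{subs:AandB} together with any further elements needed to generate $\Gamma$. Now identify the vertices of the Schreier graph with the orbit $\Gamma\circ P$ as in \autoref{rem:vertorbit}, and use the complexity function $s(Q)=|x_i|+|y_i|$. The strategy is to show the graph is non-amenable by a ``no almost-invariant sets'' argument built from \autoref{lem:DreiVonVierLemma}: since $P$ is a non-periodic connection point, by \autoref{thm:periodic}/\autoref{lem:APer}/\autoref{lem:BPer} only finitely many points of any finite vertex set $M$ can be periodic under $A$ or under $B$ — in fact, by \autoref{lem:APerPointsBigger} and \autoref{lem:BPerPointsBigger}, a point periodic under one of $A,B$ but not the other still has a neighbor (via a large power of the other parabolic, which lies in the $S$-generated graph after replacing $S$ by a metrically equivalent generating set, or directly by \autoref{IndependFromGS}) of strictly larger $s$-value. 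The heart of the matter is \autoref{lem:DreiVonVierLemma}: for a vertex $Q$ that is periodic under neither $A$ nor $B$, at least three of the four neighbors $A^{\pm k_1}\circ Q$, $B^{\pm l_1}\circ Q$ have strictly larger $s$-value than $Q$. Passing to the (still connected, cf.\ \autoref{IndependFromGS}) graph $G'$ on the same vertex set whose edges are exactly these ``$s$-increasing'' moves, one sees that $G'$ looks essentially like a forest in which every vertex has at most one edge going ``down'' (to a vertex of smaller or equal $s$-value) and many going ``up'': orienting each edge from smaller to larger $s$-value, every vertex has in-degree at most one among the three guaranteed up-edges, hence out-degree at least two after discarding the finitely many bad (periodic) vertices and the at most one down-edge. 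A graph in which, off a finite set, every vertex has at least two forward-neighbors none of which is its (unique) backward-neighbor contains an expanding binary-tree-like structure, so by \autoref{PerPoiWegl} together with the Cheeger-type estimate in the spirit of \autoref{CayleyFk} and \autoref{cheegerconstRL4T} its Cheeger constant is bounded below by a positive constant; then \autoref{omitedges} transfers non-amenability back to $G_{\Gamma,\Gamma_P,S}$.

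Assembling: $G_{\Gamma,\Gamma_P,S}$ is non-amenable, so by \autoref{prop: CheegerSpecGap} its spectral gap $\mu_0$ is positive, so by \autoref{prop: RT} the bottom of the spectrum $\lambda_0(\quotient{\HH}{\Gamma_P})$ is positive, so by \autoref{prop: PattersonSullivan} (using already $\delta>\frac12$) we get $\delta(\Gamma_P)(1-\delta(\Gamma_P))=\lambda_0>0$, hence $\delta(\Gamma_P)<1$. Combined with $\delta>\frac12$ this is the claim; and the same argument applies verbatim to $L_{D,+1}$ and $L_{D,-1}$ using the analogous constants $k_0,l_0,k_1,l_1$ tabulated in \autoref{subs: LDpm}.

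The main obstacle I expect is the bookkeeping in the non-amenability argument: turning the local statement of \autoref{lem:DreiVonVierLemma} (``three of four neighbors increase $s$'') into a genuine positive lower bound on the Cheeger constant of the whole Schreier graph. The subtlety is that the ``$s$-increasing'' relation need not be acyclic a priori — one must check that following up-edges from any vertex really does escape to infinity (so the relevant subgraph $G'$ is a forest, or close enough), and one must handle the finitely many vertices periodic under $A$ or $B$ without creating finite connected components (this is exactly what \autoref{PerPoiWegl}, \autoref{lem:APerPointsBigger} and \autoref{lem:BPerPointsBigger} are for). A second, more routine difficulty is verifying that the down-edge (to a vertex of $s$-value $\le s(Q)$) is unique, or at least bounded in number independently of $Q$; this should follow because $A^{\pm k}$ and $B^{\pm l}$ change only $y_i$ resp.\ $x_i$ and the four $\Delta$-values have the sign pattern forced by \autoref{lem:pmdifA} and \autoref{lem:pmdifB}, leaving at most one of the four moves decreasing $s$.
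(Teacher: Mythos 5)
Your outline follows the paper's own route step for step (lower bound via \autoref{thm:BeaPat}; upper bound via \autoref{prop: Tapie} after enlarging the generating set by \autoref{IndependFromGS}, pruning with \autoref{PerPoiWegl} and \autoref{omitedges}, and feeding in \autoref{lem:APerPointsBigger}, \autoref{lem:BPerPointsBigger} and \autoref{lem:DreiVonVierLemma}), but the combinatorial heart is exactly the part you defer to as an ``expected obstacle'', and there is a concrete hole in it: the vertices periodic under exactly one of $A$, $B$. At such a vertex $Q$ (say $A$-periodic), the lemmata control only the two $B^{\pm l}$-edges; the two $A^{\pm k}$-edges are not controlled at all and in general join $Q$ to other points of its finite $A$-orbit, so they may be ``down'' or ``level'' edges and may close cycles through periodic points. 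Hence your key claims -- that the non-increasing neighbour is unique and that the $s$-increasing structure is a forest -- fail precisely at these vertices. Moreover, there are in general infinitely many such singly-periodic points in the orbit (only the points periodic under \emph{both} $A$ and $B$ are finitely many, by \autoref{thm: orbitssameN} and the description in \autoref{subsect:periodic}), so they cannot be discarded via \autoref{PerPoiWegl}; they must be kept and handled. The paper's device, which is missing from your proposal, is to choose $k$ and $l$ not only larger than $k_1,l_1$ but also to be multiples of $N(P)$: since the period of a singly-periodic orbit point divides $N(P)$, the $A^{\pm k}$- (resp.\ $B^{\pm l}$-) edges at such a point become loops, leaving it with exactly two genuine neighbours, both of strictly larger $s$-value.

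With that choice made, the step you only assert (``expanding binary-tree-like structure, hence positive Cheeger constant'') is carried out in \autoref{ZweiMoeglZshgkomp}: a hypothetical cycle would contain a vertex of maximal $s$-value whose two cycle-neighbours are both non-increasing, contradicting \autoref{lem:DreiVonVierLemma}; consequently every component of the pruned graph is either the infinite $4$-valent tree or the root-looped $4$-valent tree, whose Cheeger constants are $\frac{2}{3}$ by \autoref{CayleyFk} and \autoref{cheegerconstRL4T}, and \autoref{connected} then gives non-amenability of the whole pruned graph. Note also that uniqueness of the non-increasing neighbour is genuinely needed and not a routine afterthought: a graph in which every vertex merely has two $s$-increasing neighbours (but possibly two non-increasing ones) can be amenable, e.g.\ a ladder-like graph with two vertices on each $s$-level. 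So the architecture of your argument is the paper's, but the choice of $k,l$ modulo $N(P)$, the treatment of the singly-periodic vertices, and the cycle-exclusion argument are the actual content of the proof and are not supplied. (A minor point: the first-kind/infinitely-generated property comes from Hubert--Schmidt's result quoted in the paper, not from \autoref{lem:LattOrInfGen}; for the bound $\delta>\frac{1}{2}$ you only need that the stabilizer is non-elementary and contains a parabolic.)
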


As already mentioned in the introduction by \autoref{prop:commens} the Veech groups of affine coverings of $L_{D,\epsilon}$ branched at the singularity and $P$ are commensurable to $\SL(L_{D,\epsilon};P)$. By \autoref{prop: commenssameCE} commensurable Fuchsian groups have the same critical exponent and \autoref{mainresult} follows:
{
\renewcommand{\themthm}{\ref{mainresult}}
\begin{mthm}
There exist translation surfaces whose Veech group is infinitely generated with critical exponent strictly between $\frac{1}{2}$ and $1$. More precisely this is the case for every affine covering of $L_{D,\epsilon}$ (with $D$ not a square) branched at the singularity and one non-periodic connection point $P$.
\end{mthm}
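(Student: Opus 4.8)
The plan is to deduce the Main Theorem from \autoref{mainprop}. Given any affine covering $p\colon Y\to L_{D,\epsilon}$ branched over the singularity and a non-periodic connection point $P$ (such $P$ exist by \autoref{prop: ConnPoints} together with \autoref{thm:periodic}, since only the six Weierstrass points are periodic), \autoref{prop:commens} shows that $\SL(Y)$ is commensurable to $\SL(L_{D,\epsilon};P)$. By \autoref{prop: commenssameCE} the critical exponent is a commensurability invariant, so $\frac{1}{2}<\delta(\SL(Y))<1$ follows at once from \autoref{mainprop}. Moreover $\SL(L_{D,\epsilon};P)$ is infinitely generated and of the first kind (Hubert--Schmidt, \cite{HSInf}); since ``being of the first kind'' and ``being a lattice'' are invariant under commensurability, \autoref{lem:LattOrInfGen} forces $\SL(Y)$ to be infinitely generated as well. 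Thus everything reduces to proving \autoref{mainprop}, and it is enough to do this for the prototypes $L_D$, the families $L_{D,\pm1}$ being covered by the substitutions recorded in \autoref{subs: LDpm}.

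For the lower bound in \autoref{mainprop} I would argue: the group $\Gamma_P\coloneqq\SL(L_{D,\epsilon};P)$ is of the first kind, hence non-elementary, and — the directions of saddle connections through the connection point $P$ being dense in $S^1$ — it contains (many) parabolic elements. Therefore \autoref{thm:BeaPat} gives $\delta(\Gamma_P)>\frac{1}{2}$.

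For the upper bound the plan is to use \autoref{prop: Tapie}: since $\Gamma\coloneqq\SL(L_{D,\epsilon})$ is a lattice and $\Gamma_P<\Gamma$, it suffices to exhibit one finite generating set $S$ of $\Gamma$ for which the Schreier graph $G_{\Gamma,\Gamma_P,S}$ is non-amenable (by \autoref{IndependFromGS} the choice of $S$ is irrelevant). Via \autoref{rem:vertorbit} the vertex set of this graph is the $\Gamma$-orbit of $P$, which by \autoref{thm: orbitssameN} lies entirely in $\PN$ for $N\coloneqq N(P)$. I would take $S$ to contain the powers $A^{m}$ and $B^{n}$ of the horizontal and vertical parabolics, with $m>k_1$ and $n>l_1$ for the constants $k_1,l_1$ of \autoref{lem:DreiVonVierLemma} (which depend only on $N$), and use the complexity $s(Q)=|x_i|+|y_i|$ as a height on the orbit, orienting each edge labelled $A^{\pm m}$ or $B^{\pm n}$ towards the endpoint of larger $s$. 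By \autoref{lem:DreiVonVierLemma}, at every orbit point periodic neither under $A$ nor under $B$ at least three of these four edges point outward; by \autoref{lem:APerPointsBigger} and \autoref{lem:BPerPointsBigger} a point periodic under exactly one of $A,B$ still has both edges of the opposite type pointing outward, while its two remaining edges stay inside the finite $\langle A\rangle$- (resp.\ $\langle B\rangle$-) cycle through it, a cycle on which $s$ varies only by a bounded amount since $A$ leaves $x_i$ and $B$ leaves $y_i$ unchanged. Finally the coordinate restrictions of \autoref{lem:APer} and \autoref{lem:BPer} show that only finitely many orbit points are periodic under both $A$ and $B$. Collapsing each finite periodic cycle to one vertex and deleting this finite exceptional set should turn the subgraph carrying only the $A^{\pm m}$- and $B^{\pm n}$-edges into a forest in which every vertex has at least three edges leading to strictly larger height, so that each component is an infinite tree of minimal degree $\ge 3$, which has Cheeger constant $\ge\frac{1}{2}$ (as in the proof of \autoref{CayleyFk}). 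Then \autoref{connected}, \autoref{omitedges} (restoring the other generators' edges) and \autoref{PerPoiWegl} (restoring the finite exceptional set) make $G_{\Gamma,\Gamma_P,S}$ non-amenable, and \autoref{prop: Tapie} yields $\delta(\Gamma_P)<1$.

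I expect the main obstacle to be this last step. The reduction of the Main Theorem to \autoref{mainprop} and the lower bound are immediate from the cited results, but one must choose the generating set and organize the bookkeeping carefully so that, after collapsing the $\langle A\rangle$- and $\langle B\rangle$-periodic cycles and removing the finitely many points periodic under both, what survives is genuinely a forest with enough branching and with no finite connected components: in particular one has to control the single edge per vertex not handled by \autoref{lem:DreiVonVierLemma} and the edges joining two periodic cycles, including the possibility of an edge along which $s$ does not strictly change. The quantitative estimates in \autoref{sect: technLemm}, together with the choices $m>k_1$, $n>l_1$ depending on $N(P)$, are precisely what make the ``ascending'' behaviour uniform over the whole infinite orbit of $P$ and thus what this step should rest on.
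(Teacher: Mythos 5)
Your reduction of the Main Theorem to \autoref{mainprop} and your lower bound $\delta>\frac{1}{2}$ are exactly the paper's argument, and your plan for the upper bound also follows the paper's route (\autoref{prop: Tapie}, enlarging the generating set by high powers of $A$ and $B$, removing the finitely many doubly periodic points via \autoref{PerPoiWegl}, discarding the remaining generators' edges via \autoref{omitedges}, and using the height $s$ together with \autoref{lem:APerPointsBigger}--\autoref{lem:DreiVonVierLemma}). Where you diverge is precisely the decisive combinatorial step, and there your argument has a genuine gap: the proposed contraction of the finite $\langle A\rangle$- and $\langle B\rangle$-cycles at singly periodic points is not licensed by any of the tools you invoke. \autoref{PerPoiWegl} only allows deleting a \emph{finite} set of vertices, whereas there are infinitely many singly periodic points in the orbit; \autoref{omitedges} only deletes edges; and \autoref{IndependFromGS} does not apply to the collapse map, since it identifies distinct vertices at positive distance and hence is not a metric equivalence in the sense defined there. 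So non-amenability of your collapsed graph does not, by the results you cite, transfer back to $G''$. Moreover the claim that the collapsed graph is a forest of minimal degree at least $3$ is only asserted: one must in addition rule out $B^{\pm n}$-edges joining two $A$-periodic points (this does hold, because $A$-periodic points have $0\leq x_i<1$ by \autoref{lem:APer} while $|\Delta_{B^{\pm n}}(Q)|>2$ for $n>l_0$, but it has to be said), and the maximum-of-$s$ argument excluding cycles has to be adapted to clusters on which $s$ is not constant.

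The paper avoids all of this with one small device you are missing: since $N(Q)=N(P)$ is a multiple of the period of any singly periodic orbit point, the exponents may be chosen as multiples of $N(P)$ in addition to being larger than $k_1$ resp.\ $l_1$. Then $A^{\pm k}$ (resp.\ $B^{\pm l}$) \emph{fixes} every $A$- (resp.\ $B$-) periodic point, the corresponding edges become loops and vanish in the simple graph, and \autoref{ZweiMoeglZshgkomp} shows that every component of $G''$ is literally the infinite $4$-valent tree or the root-looped $4$-valent tree, both of Cheeger constant $\frac{2}{3}$ (\autoref{CayleyFk}, \autoref{cheegerconstRL4T}); \autoref{connected} then gives non-amenability of $G''$ with no collapsing at all. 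If you insist on generic exponents, you must either prove directly that contracting the periodic cycles (which have length at most $N(P)$, so a Følner-set comparison would work) preserves non-amenability, or run the increasing-height argument on $G''$ itself; as written, the passage from your ``forest'' back to $G''$ is unproven.
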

\addtocounter{mthm}{-1}
}

In \autoref{critexp} we have seen, that $\delta(\SL(L_{D,\epsilon};P))>\frac{1}{2}$, since $\SL(L_{D,\epsilon};P)$ is non-elementary and contains a parabolic element (\autoref{thm:BeaPat}). Furthermore, by \autoref{prop: Tapie} the critical exponent $\delta(\SL(L_{D,\epsilon};P))$ is strictly smaller than $1$ if the Schreier graph of $\SL(L_{D,\epsilon})$ with respect to $\SL(L_{D,\epsilon};P)$ and any finite generating set $S$ of $\SL(L_{D,\epsilon})$ is non-amenable. 

So let us now show that for every non-periodic connection point $P$ and any finite set $S$ generating $\Gamma=\SL(L_{D,\epsilon})$ the graph $G_{\Gamma,\Pi,S}$ is non-amenable, where in order to improve readability $\Pi$ denotes the subgroup $\SL(L_{D,\epsilon};P)$. \autoref{IndependFromGS} tells us that we can change the finite generating set. Thus $G_{\Gamma,\Pi,S}$ is non-amenable if and only if $G=G_{\Gamma,\Pi,S\cup\{A^k,B^l\}}$ is non-amenable for some numbers $k$ and $l$.

Let $G'$ be the subgraph of $G$ induced by all vertices of $G$ that correspond to points that are not periodic under both $A$ and $B$. These points are described in \autoref{subsect:periodic}. In particular points periodic under $A$ and $B$ have bounded \ip s $x_i$ and $y_i$. By \autoref{thm: orbitssameN} the $\Gamma$-orbit of $P$ is contained in $\PN[(P)]$. Hence the denominators of the components of points in $\Gamma\circ P$ are bounded by $N(P)$. Therefore, there are only finitely many points periodic under $A$ and $B$. Thus we can apply  \autoref{PerPoiWegl} and non-amenability of $G'$ implies non-amenability of $G$.

The last step to simplify the graph is to omit all edges that are not labeled with $A^k$ or $B^l$. We call this graph $G''$. Omitting edges does not increase the Cheeger constant (\autoref{omitedges}). Hence the graphs $G'$, $G$ and $G_{\Gamma,\Pi,S}$ are non-amenable if $G''$ is non-amenable.

That $G''$ is indeed non-amenable follows from:

\begin{prop}\label{ZweiMoeglZshgkomp}
There are numbers $k$ and $l$ depending only on $N\coloneqq N(P)$ and $\om$, such that every connected component of $G''$ is either isomorphic to the infinite $4$-valent tree or to the \gz \ (cf. \autoref{pic:F2BaumRL4T}).
\end{prop}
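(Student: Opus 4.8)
The idea is to build the graph $G''$ as a labelled graph on the non-periodic part of the orbit $\Gamma\circ P$, where at each vertex $Q$ we have exactly four edges: the two $A^{\pm k}$-edges and the two $B^{\pm l}$-edges, for $k,l$ chosen once and for all to be the numbers $k_1,l_1$ (rather: any $k>k_1$, $l>l_1$) coming from \autoref{lem:DreiVonVierLemma}, large enough so that also the conclusions of \autoref{lem:APerPointsBigger}--\autoref{lem:pmdifB} apply. First I would observe that, since $P$ is non-periodic and $\Gamma\circ P\subset\PN[(P)]$ by \autoref{thm: orbitssameN}, every vertex of $G''$ is a connection point with denominator bounded by $N=N(P)$, and none of them is periodic under both $A$ and $B$; the finitely many points periodic under exactly one of $A$, $B$ will have to be treated as a possible ``root'' of a component. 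The function $s(Q)=|x_i|+|y_i|$ takes values in $\tfrac1N\ZZ_{\geq 0}$, so on any component it attains a minimum, and the combinatorial heart of the argument is that this minimum is attained at most once per component and that away from it the graph branches like a tree.

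The key step is the following ``three-out-of-four grows'' dichotomy, which I would extract from the technical lemmata of \autoref{sect: technLemm}. Fix a vertex $Q$ of $G''$.
\begin{itemize}
\item If $Q$ is periodic neither under $A$ nor under $B$, then by \autoref{lem:DreiVonVierLemma} at least three of the four neighbours $A^{\pm k}\circ Q$, $B^{\pm l}\circ Q$ satisfy $s>s(Q)$; equivalently, at most one neighbour has $s\le s(Q)$. Combined with \autoref{lem:pmdifA} and \autoref{lem:pmdifB} (the signs of $\Delta_{A^{\pm k}}$, resp. $\Delta_{B^{\pm l}}$, are opposite), one checks that in fact at most one neighbour $Q'$ has $s(Q')\le s(Q)$, and if $s(Q')=s(Q)$ this is a degenerate coincidence one rules out using that the changes are strictly monotone in $|k|$ (resp. $|l|$) past $k_1$ (resp. $l_1$); so actually $s$ strictly decreases towards at most one neighbour.
\item If $Q$ is periodic under $A$ but not under $B$ (so $0\le x_i<1$, $y_i\neq 0$), then by \autoref{lem:APerPointsBigger} \emph{both} $B^{\pm l}$-neighbours have strictly larger $s$; and the two $A^{\pm k}$-neighbours are again $Q$ itself? — no: here one uses that $A$-periodicity makes the $A^{\pm k}$-edges behave like loops or like a small cycle, so that in $G''$ the vertex $Q$ effectively has only the two $B$-edges leaving it to new vertices. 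Symmetrically for $B$-periodic but not $A$-periodic points.
\end{itemize}
From this dichotomy I would run the standard argument: starting from any vertex and repeatedly moving to the unique neighbour (if any) with $s$ not larger, the value of $s$ strictly decreases, hence this walk terminates; it can only terminate at a vertex all of whose neighbours have strictly larger $s$, and such a vertex is unique in its component (two of them would be joined by a path along which $s$ would have to both strictly increase from each end and meet, a contradiction). Call it the \emph{root} $R$ of the component. Every other vertex has exactly one neighbour closer to the root (smaller $s$) and the remaining three neighbours farther; this is exactly the local picture of the $4$-valent tree hanging below a root, i.e. the component is a tree in which $R$ has some degree $\le 4$ and every other vertex has degree $4$. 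Distinguishing whether $R$ is non-periodic (all four edges go ``up'', degree $4$ — infinite $4$-valent tree) or periodic under one of $A$, $B$ (two of its edges close up into a loop, leaving two edges going ``up'', which is precisely the root-looped $4$-valent tree $T_{rl4}$, matching \autoref{F2modZ} with $\Pi=\langle a\rangle$) gives the two cases in the statement; here $k,l$ depend only on $N$ and $\om$ because $k_1,l_1$ do.

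\textbf{Main obstacle.} The genuinely delicate point is not the tree-combinatorics but pinning down \emph{exactly one} neighbour with non-increasing $s$ and excluding the borderline equality $s(Q')=s(Q)$: \autoref{lem:DreiVonVierLemma} only guarantees ``at least three of four'' strictly increase, so a priori one neighbour could tie. I would handle this by showing the tie is impossible for $|k|>k_1$, $|l|>l_1$: the relevant $\Delta_{A^{k}}(Q)$ and $\Delta_{B^{l}}(Q)$ are, by the explicit formulas \eqref{eqn:DeltaAk} and \eqref{eqn:DeltaBl}, affine-linear in $k$ (resp. $l$) with non-zero slope once the point is not periodic, so their absolute values are strictly monotone and cannot produce $|x_i+\Delta|=|x_i|$ (resp. $|y_i+\Delta|=|y_i|$) beyond the threshold except in the single ``folding'' configuration, which is exactly the step back towards the root. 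The second subtlety is the bookkeeping at $A$- or $B$-periodic vertices, where an $A^{\pm k}$- (resp. $B^{\pm l}$-) orbit is finite: one must verify that after collapsing the resulting multi-edges and loops (as permitted by the conventions in \autoref{sect:Schreier}) one obtains precisely the loop at the root of $T_{rl4}$ and not some longer cycle — this uses that periodicity under $A$ forces $x_i=0$ or $x_r=1$ (\autoref{lem:APer}), which together with the integrality $x\in\ZZ[w]$ of periods makes the relevant Dehn-twist orbit have length $1$ on the level of the marked point.
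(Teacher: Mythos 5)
Your overall strategy is the paper's: use $s(Q)=|x_i|+|y_i|$, the growth lemmata, and the distinction between points periodic under exactly one of $A,B$ (roots with a loop) and points periodic under neither (degree-$4$ vertices). But there is a genuine gap in how you handle the periodic vertices, and it is exactly where the freedom ``$k,l$ depending only on $N$ and $\om$'' has to be used. You take arbitrary $k>k_1$, $l>l_1$ and then claim that at an $A$-periodic point the $A^{\pm k}$-edges collapse to a loop because ``periodicity under $A$ forces $x_i=0$ or $x_r=1$, which \ldots makes the relevant Dehn-twist orbit have length $1$.'' That claim is false: an $A$-periodic point has a rational splitting ratio whose denominator can be any divisor of $N$, so its orbit under the Dehn twist has length up to $N$ (e.g.\ $x_r=\tfrac12$, $x_i=0$ gives period $2$). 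For a generic $k>k_1$ not divisible by that period, the $A^k$-edges through such a point form a cycle of length $p/\gcd(p,k)>1$ consisting of several $A$-periodic points, each carrying two further $B^{\pm l}$-edges; such a component is neither the infinite $4$-valent tree nor the root-looped tree, so with your choice of $k,l$ the statement is simply not true. The paper's proof avoids this by choosing $k$ and $l$ to be multiples of $N$ exceeding $k_1$, $l_1$: since every point $Q$ in the orbit has $N(Q)=N$ (\autoref{thm: orbitssameN}) and the period of a periodic point divides $N$, the elements $A^k$, $B^l$ then genuinely fix the periodic points, turning those edges into loops. This congruence condition is the missing ingredient in your proposal, and it cannot be dispensed with.

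A second, smaller weakness is your insistence on excluding the tie $s(Q')=s(Q)$ at the unique non-increasing neighbour. \autoref{lem:DreiVonVierLemma} only gives ``at least three of four strictly increase,'' and nothing in \eqref{eqn:DeltaAk}, \eqref{eqn:DeltaBl} or the cited lemmata rules out, say, $\Delta_{B^{-l}}(Q)=-2x_i$ exactly; your appeal to monotonicity in $k$ and $l$ does not address this, so as written the termination and ``unique root'' argument is not justified. The tie is, however, harmless if you argue as the paper does: to exclude cycles one takes a vertex of maximal $s$ on a putative cycle (or on a path joining two local minima) and observes that it would have two neighbours with $s$ not larger, contradicting the three-of-four inequality; no strict-decrease walk and no uniqueness of a minimizing vertex are needed for the non-periodic components, and in the component of a periodic point the strict increase along every non-backtracking path away from it (via \autoref{lem:APerPointsBigger}, \autoref{lem:BPerPointsBigger} and then \autoref{lem:DreiVonVierLemma}) already forces the root-looped tree. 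With the multiples-of-$N$ choice of $k,l$ and this rearrangement, your argument becomes the paper's proof.
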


\begin{figure}[h!]
\begin{center}
\includegraphics[width=0.9\linewidth]{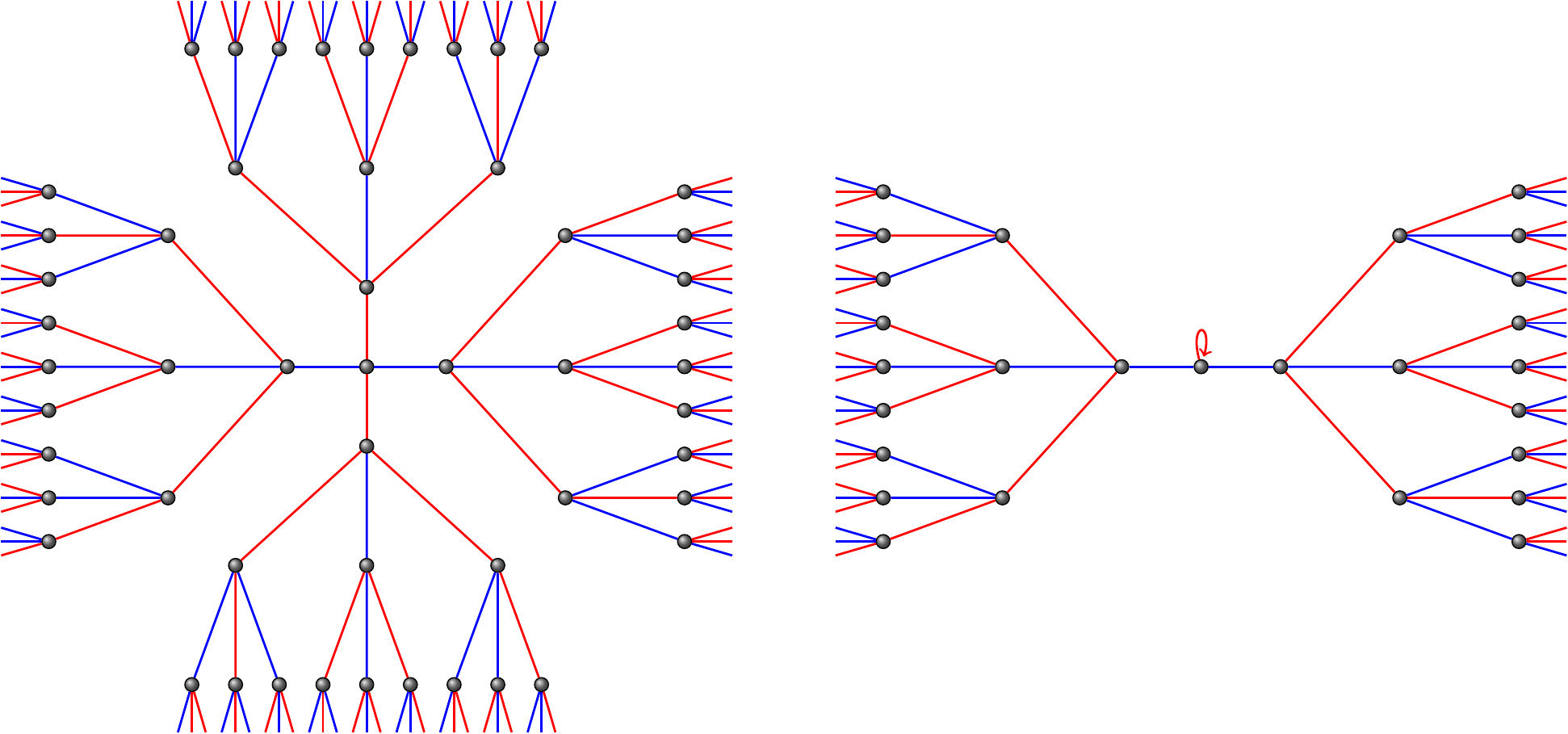}
\caption[Infinite $4$-valent tree and root-looped $4$-valent tree]{The infinite $4$-valent and the root-looped $4$-valent tree.}
\label{pic:F2BaumRL4T}
\end{center}
\end{figure}

As seen in \autoref{CayleyFk} and \autoref{cheegerconstRL4T} both graphs have Cheeger constant $\frac{2}{3}$, hence this proposition together with Corollary \ref{connected} implies the non-amenability of $G''$ and thus gives also:
\begin{cor}
The graph $G_{\{\Gamma,\Pi,S\}}$ is non-amenable for any nonperiodic connection point $P$ and any finite generating set $S$ of $\Gamma$.
\end{cor}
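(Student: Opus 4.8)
The statement has already been reduced, in the text preceding it, to \autoref{ZweiMoeglZshgkomp}: once we know that every connected component of the auxiliary graph $G''$ is either the infinite $4$-valent tree or the root-looped $4$-valent tree, we are done. So the proof I would write here is essentially a chain of citations assembling the pieces already in place, and I would present it as follows. First, by \autoref{IndependFromGS} amenability of $G_{\Gamma,\Pi,S}$ does not depend on the choice of finite generating set $S$ of $\Gamma=\SL(L_{D,\epsilon})$; hence it suffices to prove non-amenability for the enlarged generating set $S\cup\{A^k,B^l\}$, where $k,l$ are the numbers furnished by \autoref{ZweiMoeglZshgkomp}. Call this graph $G$.

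\textbf{Key steps.} Second, I would pass to the induced subgraph $G'$ on the vertices corresponding to points of the orbit $\Gamma\circ P$ that are periodic neither under $A$ nor under $B$. By \autoref{thm: orbitssameN} the whole orbit lies in $\PN[(P)]$, so all coordinates have denominator bounded by $N(P)$; combined with \autoref{lem:APer} and \autoref{lem:BPer} (which bound the irrational parts of $A$- or $B$-periodic points), only finitely many orbit points are periodic under $A$ or under $B$. The Schreier graph has no finite connected component (it is connected, being a Schreier graph with respect to a generating set, and infinite since $P$ is non-periodic), so \autoref{PerPoiWegl} applies and non-amenability of $G'$ implies non-amenability of $G$. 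Third, delete from $G'$ all edges not labelled $A^k$ or $B^l$ to obtain $G''$; by \autoref{omitedges} the Cheeger constant can only drop, so non-amenability of $G''$ implies non-amenability of $G'$. Fourth, invoke \autoref{ZweiMoeglZshgkomp}: every component of $G''$ is the infinite $4$-valent tree or the root-looped $4$-valent tree, each of which has Cheeger constant $\frac{2}{3}>0$ by \autoref{CayleyFk} and \autoref{cheegerconstRL4T}; hence by \autoref{connected} the Cheeger constant of $G''$ is $\frac{2}{3}>0$, so $G''$ is non-amenable. Tracing the implications back yields that $G=G_{\Gamma,\Pi,S\cup\{A^k,B^l\}}$ is non-amenable, and therefore $G_{\Gamma,\Pi,S}$ is non-amenable for every finite generating set $S$.

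\textbf{Main obstacle.} The only substantive content is \autoref{ZweiMoeglZshgkomp}, which is stated but not yet proved; everything else in the corollary is bookkeeping with results already established. So in practice the proof of the corollary is a short paragraph of the form ``combine \autoref{IndependFromGS}, \autoref{PerPoiWegl}, \autoref{omitedges}, \autoref{ZweiMoeglZshgkomp}, \autoref{connected}''. If one wanted a self-contained argument the hard part would be exactly the proof of \autoref{ZweiMoeglZshgkomp} — showing that the $s$-function behaviour from \autoref{lem:DreiVonVierLemma} forces each component of $G''$ to be a tree (no cycles can close up, because along any cycle $s$ would have to both strictly increase along ``most'' outgoing edges and return to its starting value) and to have the claimed local structure at the unique possible degree-$2$ vertex. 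But that is deferred to \autoref{ZweiMoeglZshgkomp}, so the corollary's proof itself is routine.
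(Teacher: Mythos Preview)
Your chain of reductions is exactly the one the paper uses, and your overall organization is correct. There is, however, one genuine slip that breaks the argument as written.

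You define $G'$ as the induced subgraph on orbit points periodic \emph{neither} under $A$ \emph{nor} under $B$, and then claim that only finitely many orbit points are periodic under $A$ or under $B$. But \autoref{lem:APer} only bounds $x_i$ for $A$-periodic points (it says nothing about $y_i$), and \autoref{lem:BPer} only bounds $y_i$ for $B$-periodic points. So an $A$-periodic orbit point can have arbitrarily large $|y_i|$, and there is no reason for the set of $A$-periodic (or $B$-periodic) orbit points to be finite. Consequently \autoref{PerPoiWegl}, which requires removing a \emph{finite} vertex set, does not apply to your $G'$. Moreover, \autoref{ZweiMoeglZshgkomp} is stated for the paper's $G''$, which still contains the $A$- or $B$-periodic points (they are precisely the roots of the root-looped trees in Case~1 of that proof); with your $G'$ those roots have been deleted and the proposition no longer describes the components.

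The fix is simply to follow the paper's definition: remove only the orbit points periodic under \emph{both} $A$ \emph{and} $B$. For such points \autoref{lem:APer} and \autoref{lem:BPer} together bound both $x_i$ and $y_i$, and then the denominator bound from \autoref{thm: orbitssameN} gives finiteness. With this correction your proof is identical to the paper's.
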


\begin{proof}[Proof of \autoref{ZweiMoeglZshgkomp}.]
In this proof we will again concentrate on the surfaces $L_D$. For the surfaces $L_{D,\pm 1}$ nearly the same proof works. One just has to replace the values $k_0,l_0,k_1$ and $l_1$ with the corresponding values described in \autoref{subs: LDpm}.

We will analyze the points corresponding to the vertices of $G''$ and the sum of the absolute values of their \ip s. Recall that for a point $Q=\PointL{x_r+x_i\om}{y_r+y_i\om}$ this sum $|x_i|+|y_i|$ is denoted by $s(Q)$. 
\begin{enumerate}
\item\label{ZweiVonZwei} For every point $Q$ in the orbit of $P$ periodic under $A$ (resp. $B$) we know $N(Q)=N(P)\eqqcolon N$ and this is a multiple of the period (see \autoref{subsect:periodic}). Hence in $G''$ the circle at the periodic point $Q$ is in fact a loop -- i.e. of length $1$ -- if we choose $k$ (resp. $l$) to be a multiple of $N$.

Thus $Q$ has exactly two neighbors $\neq Q$, namely $B^l\circ Q$ and $B^{-l}\circ Q$ (resp. ($A^k\circ Q$ and $A^{-k}\circ Q$)). \autoref{lem:APerPointsBigger} and \autoref{lem:BPerPointsBigger} state that for $l>l_0=\max\left\{\frac{3N}{\om-1},2N+1\right\}$ \[s(Q)<s(B^{l}\circ Q) \textnormal{ and } s(Q)<s(B^{-l}\circ Q)\] (resp. for $k>k_0=\max\left\{\frac{3N}{w},2N+1\right\}$ the analogous statement for $s(A^{\pm k}\circ Q)$) holds.

\item \label{DreiVonVier} In \autoref{lem:DreiVonVierLemma} we have seen that if we choose $k>k_1=\max\left\{k_0,\frac{2(N+1)}{\om}\right\}$ and $l>l_1=\max\left\{l_0,\frac{2(N+1)}{\om-1}\right\}$, for all points $Q$ neither periodic under $A$ nor under $B$ at least three of the following inequalities hold:
\begin{align*}
s(Q) &< s(A^{k}\circ Q),\\
s(Q) &< s(A^{-k}\circ Q),\\
s(Q) &< s(B^{l}\circ Q),\\
s(Q) &< s(B^{-l}\circ Q).
\end{align*}
\end{enumerate}
All these conditions on the choice of $k$ and $l$ only depend on $N$ and $w$. Hence we can and do choose $k$ as the smallest multiple of $N$ bigger than $k_1$ and $l$ as the smallest multiple of $N$ bigger than $l_1$ and analyze which graphs can occur as connected components of $G''$:

\textbf{First Case: in the connected component there is a point $Q$ periodic under $A$ or $B$:}

Let us remove the loop at $Q$ and look at an path of length $n$ starting in $Q=Q_0$ visiting the points \mbox{$Q_0,Q_1,\ldots Q_n$} with $Q_1\neq Q_0$ and $Q_{j+2}\neq Q_j$. From \autoref{ZweiVonZwei} we know that $s(Q_1)>s(Q)$. Thus $Q_1$ has a neighbor with smaller $s$-value and can not be periodic under $A$ or $B$. Furthermore we also know the one and only neighbor of $Q_1$ that has a lower $s$-value than $Q_1$ is $Q_0$ (because of \autoref{DreiVonVier}) and we get $s(Q_2)>s(Q_1)$. The same argument for the following points show that none of the points $Q_j$, $j\geq 1$ is periodic under $A$ or $B$ and $\left(s(Q_j)\right)_{j=0,\ldots ,n}$ is strictly increasing. In particular there is no circle in this connected component, all vertices beside $Q$ in the same component have valency $4$. This component is isomorphic to the \gz .

\textbf{Second Case: in the connected component there is no point periodic under $A$ or $B$, so all vertices have valency $4$:}

Suppose in the connected component there is a circle $[v_0,v_1,\ldots, v_{n-1},v_0]$ with corresponding points $[Q_0,Q_1,\ldots, Q_{n-1},Q_0]$. Then the set $\{s(Q_0),\ldots, s(Q_{n-1})\}$ is finite and thus contains its maximum. Let $Q_j$ be a point with $s(Q_j)$ this maximum. This implies $s(Q_{j-1})\leq s(Q_j)$ and $s(Q_{j+1})\leq s(Q_j)$, which is a contradiction to \autoref{DreiVonVier}. Thus in this case the connected component is the infinite $4$-valent tree.
\end{proof}


\section{Orbits of Connection Points of $L_8$}\label{sect:orbits}
In \autoref{thm: orbitssameN} we have seen that for a connection point $P$ the least common denominator $N(P)$ is an invariant for the $\SL(L_{D,\epsilon}$-orbit of $P$. Since all points with both coordinates in $\QQ(w)$ are connection points this implies that there are infinitely many distinct orbits of connection points. We conjecture that for all the surfaces $L_{D,\epsilon}$ every fixed number $N$ the set $\PN$ decomposes into a \textbf{finite} number of $\SL(L_{D,\epsilon})$-orbits. In this section we will concentrate on the case $D=8$. We establish a method to find lower bounds and give upper bounds on the number of orbits for fixed least common denominator $N$. In particular, we prove this conjecture for $D=8$.

First we define a finite graph $G_N$ for each $N$ such that the number of connected components $C(N)$ of this graph is a lower bound on the number of orbits. The number $C(N)$ for small values $N$ is listed in \autoref{tab:ConnComp}.

\begin{table}[h!]
\begin{center}
\begin{tabular}{c|c|c|c|c|c|c|c|c|c|c|c|c|c|c}
\rule[-1ex]{0pt}{2.5ex}$N$ & 1 & 2 & 3 & 4 & 5 & 6 & 7 & 8 & 9 & 10 & 11 & 12 & 13 & 14\\ 
\hline 
\rule[-1ex]{0pt}{2.5ex} $C(N)$& 1 & 5 & 1 & 8 & 1 & 5 & 3 & 8 & 1 & 5 & 1 & 8 & 1 & 15\\ 
\hline  \hline
\rule[-1ex]{0pt}{2.5ex} $N$ & 15 & 16 & 17 & 18 & 19 & 20 & 21 & 22 & 23 & 24 & 25 & 26 & 27 & 28\\ 
\hline 
\rule[-1ex]{0pt}{2.5ex} $C(N)$& 1 & 8 & 3 & 5 & 1 & 8 & 3 & 5 & 3 & 8 & 1 & 5 & 1 & 24\\ 
\end{tabular}
\caption{The number of connected components $C(N)$ of the graph $G_N$.}
\label{tab:ConnComp}
\end{center}
\end{table}

\begin{conj}
The values of $C(N)$ for small $N$ shown in \autoref{tab:ConnComp} indicate that the function $C:N\mapsto C(N)$ is weakly multiplicative, i.e. $C(NM)=C(N)C(M)$ for all co-prime natural numbers $N$ and $M$. Furthermore, we conjecture that, if $N=p^k$ is a power of an odd prime, $C(N)$ equals $C(p)$.
\end{conj}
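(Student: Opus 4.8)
The plan is to translate $C(N)$ into the language of finite quotients of $\SL(L_8)$. Since the periods of $L_8$ lie in $\ZZ[w]^2$ and all entries of all elements of $\SL(L_8)$ lie in $\ZZ[w]$ (recorded just before \autoref{thm: orbitssameN}), reducing a connection point of least common denominator $N$ modulo $N$ kills the period ambiguity and yields a well-defined class in $(\ZZ[w]/N\ZZ[w])^2\cong(\ZZ/N\ZZ)^4$, the condition ``denominator exactly $N$'' becoming ``the class is $\not\equiv 0$ modulo $p\ZZ[w]$ for every prime $p\mid N$''. This cuts out a finite set $X_N$ on which $\SL(L_8)$ acts through its finite image $\bar\Gamma_N\leq\SL_2(\ZZ[w]/N\ZZ[w])$. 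The first step is to make this dictionary precise and to identify $C(N)$ with the number of $\bar\Gamma_N$-orbits on $X_N$, reconciling it with the definition of $G_N$ at the start of \autoref{sect:orbits}; both halves of the conjecture then become assertions about the pairs $(\bar\Gamma_N,X_N)$.

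For weak multiplicativity, fix coprime $N,M$. The Chinese Remainder Theorem gives an $\SL(L_8)$-equivariant bijection $X_{NM}\cong X_N\times X_M$ for the diagonal action, so $C(NM)=C(N)\,C(M)$ would follow at once from the joint surjectivity $\bar\Gamma_{NM}=\bar\Gamma_N\times\bar\Gamma_M$ (the $G_1\times G_2$-orbits on $S_1\times S_2$ being products of orbits). By Goursat's lemma this is equivalent to the two projections of $\bar\Gamma_{NM}$ having no common nontrivial quotient, which is the usual strong-approximation mechanism, since $\bar\Gamma_N$ and $\bar\Gamma_M$ come from $\SL_2$ over rings of coprime residue characteristic. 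I would deduce it from Zariski-density of $\SL(L_8)$ in $\mathrm{Res}_{\QQ(w)/\QQ}\SL_2$ --- which holds because Galois conjugation $w\mapsto-w$ is nontrivial, so the Fuchsian lattice $\SL(L_8)$ and its conjugate cannot satisfy a common algebraic relation --- combined with the Matthews--Vaserstein--Weisfeiler / Nori strong approximation theorem, once the finitely many primes at which the image is not the full special linear group are set aside.

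For odd prime powers $N=p^k$, consider the reduction $X_{p^k}\to X_p$: it is surjective and equivariant for $\bar\Gamma_{p^k}\twoheadrightarrow\bar\Gamma_p$, and a fibre over a class of $X_p$ is a torsor under $(p\ZZ[w]/p^k\ZZ[w])^2$ on which the finite $p$-group $K=\ker(\bar\Gamma_{p^k}\to\bar\Gamma_p)$ acts. The key lemma to establish is that $K$ acts transitively on every fibre; granting it, the induced map $X_{p^k}/\bar\Gamma_{p^k}\to X_p/\bar\Gamma_p$ is a bijection, giving $C(p^k)=C(p)$. Transitivity of $K$ should follow from an explicit computation: a principal-congruence element $\smatr{1+p\alpha}{p\beta}{p\gamma}{1+p\delta}$ moves a point $u=(u_1,u_2)$ by the increment $p\cdot(\alpha u_1+\beta u_2,\ \gamma u_1+\delta u_2)$, and for $p$ odd and unramified in $\ZZ[w]$ one of $u_1,u_2$ is a unit modulo $p^{k-1}$, so these increments, subject only to the $\SL_2$-relation on $\alpha,\beta,\gamma,\delta$, exhaust the fibre --- again using that $\SL(L_8)$ surjects onto the relevant congruence layer at $p$. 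The exclusion of $p=2$ is precisely the ramification $(2)=(w)^2$: it makes $\ZZ[w]/2$ a non-reduced local ring rather than a field and breaks the computation, which is why in \autoref{tab:ConnComp} the sequence $C(2^k)$ stabilises only from $k=2$ onwards.

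The real obstacle in both parts is the same: controlling the image of $\SL(L_8)$ modulo $N$ --- whether $\SL(L_8)$ is, or contains, a congruence subgroup of $\SL_2(\ZZ[w])$, and exactly which primes are exceptional. Determining the congruence behaviour of a concrete Veech group is delicate, and combined with the bookkeeping forced by the split primes (where ``$\not\equiv 0$ modulo $p\ZZ[w]$'' is strictly weaker than unimodularity) and by the ramified prime $2$, this is where the genuine work lies --- presumably the reason the statement is left as a conjecture. A more hands-on route would build graph morphisms $G_{p^{k+1}}\to G_{p^k}$ and $G_{NM}\to G_N\times G_M$ directly and prove bijectivity on connected components by combinatorics, but checking bijectivity appears to require the same $p$-adic input in disguise.
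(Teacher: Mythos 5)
There is nothing in the paper to compare your argument against: the statement is presented purely as a conjecture, supported only by the computed values of $C(N)$ in \autoref{tab:ConnComp}, and the paper gives no proof of either the multiplicativity claim or the stabilization $C(p^k)=C(p)$. So the only question is whether your proposal closes the conjecture, and it does not. Your dictionary is sound --- $C(N)$ is indeed the number of orbits of the mod-$N$ image $\bar\Gamma_N$ of $\SL(L_8)=\langle A,B\rangle$ acting on the primitive classes in $(\ZZ[w]/N\ZZ[w])^2\cong(\ZZ/N\ZZ)^4$, since connected components of $G_N$ are exactly orbits of the group generated by the two reductions --- and the Zariski-density argument via the nontrivial Galois conjugation is plausible. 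But the strong-approximation input you invoke only gives surjectivity onto $\SL_2(\ZZ[w]/N\ZZ[w])$ (and joint surjectivity at distinct primes) away from a finite, unidentified set of exceptional moduli, whereas the conjecture asserts exact equalities for \emph{all} coprime $N,M$ and \emph{all} odd prime powers; determining the exceptional primes and the precise congruence image of this particular Veech group is the entire content, as you yourself concede, so what you have is a conditional reduction, not a proof.

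There is also a concrete error in the second half. Your key lemma --- that $K=\ker(\bar\Gamma_{p^k}\to\bar\Gamma_p)$ acts transitively on every fibre of $X_{p^k}\to X_p$ --- fails at split primes $p\equiv\pm1\bmod 8$ (exactly the primes with $C(p)=3$ in the table). The primitivity condition $\gcd(a,b,c,d,N)=1$ only forbids divisibility by the rational prime $p$, not by a prime ideal $\mathfrak p\mid p$ of $\ZZ[w]$; for a class $u$ with both coordinates in $\mathfrak p$, neither $u_1$ nor $u_2$ is a unit in $\ZZ[w]/p^{k-1}\ZZ[w]$, the increments $p(\alpha u_1+\beta u_2,\gamma u_1+\delta u_2)$ stay inside $p\,\mathfrak p$ at that factor, and even the full principal congruence kernel of $\SL_2(\ZZ[w]/p^k\ZZ[w])$ cannot move $u$ through its whole fibre. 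So $C(p^k)=C(p)$ cannot be obtained from fibrewise transitivity alone; one would need a separate analysis of the orbits of non-unimodular (but $\ZZ$-primitive) classes, e.g.\ showing that the $\mathfrak p$-divisibility type is the only surviving invariant, on top of the congruence-image problem. As it stands the proposal is a reasonable programme, with the same open core the paper flags by calling this a conjecture, but it is not a proof.
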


Crucial for the method for the lower bound is the fact that $\SL(L_8)$ is generated by the two parabolic elements $A=\smatr{1}{0}{\sqrt{2}}{1}$ and $B=\smatr{1}{\sqrt{2}}{0}{1}$ analyzed in \autoref{subs:AandB}.

The main result of this section is:

{
\renewcommand{\thetheorem}{\ref{thm:Orbits}}
\begin{thm}
Set $\om\coloneqq \sqrt{2}$ and fix $N\in\NN$. The set $\PN$ decomposes into a \textbf{finite} number of $\SL(L_8)$-orbits.
\end{thm}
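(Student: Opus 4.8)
The plan is to prove that every $\SL(L_8)$-orbit in $\PN$ contains a point $P$ with $s(P)\le C$ for a constant $C$ (which can in fact be chosen independently of $N$), and to combine this with the observation that $\PN$ has only finitely many points of bounded $s$. The latter is immediate: a point of $L_8$ has coordinates in the bounded box $[0,1+\om)\times[0,\om)$, so $s(P)=|x_i|+|y_i|\le C$ forces $x_i,y_i$ into a finite subset of $\tfrac1N\ZZ$, and then $x_r=x-x_i\om$ and $y_r=y-y_i\om$ each range over an interval of length $<1+\om$ meeting $\tfrac1N\ZZ$ in finitely many points. Since $N(P)$ is an orbit invariant (\autoref{thm: orbitssameN}), $\PN$ is a union of orbits, so the claim yields the theorem; and because $C$ is effective, it also gives the promised (crude) upper bound on the number of orbits.

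For the reduction I would exploit that $\SL(L_8)=\langle A,B\rangle$, so it is enough to decrease $s$ by words in $A$ and $B$, for which the formulas \eqref{eqn:DeltaAk} and \eqref{eqn:DeltaBl} for $\Delta_{A^k}(Q)$ and $\Delta_{B^l}(Q)$ are precisely the tool. Fix $Q\in\PN$ with $s(Q)$ larger than some absolute constant, and suppose first that $|x_i|\ge|y_i|$ and that $Q$ is not periodic under $B$; the complementary inequality $|y_i|\ge|x_i|$ is treated by the parallel argument with the roles of $A$ and $B$, and of the horizontal and vertical cylinder decompositions, interchanged, just as in the two cases of \autoref{lem:DreiVonVierLemma}. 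Then $A$ leaves $y_i$ fixed while $B^l$ changes $x_i$ by $\Delta_{B^l}(Q)$, and by \eqref{eqn:DeltaBl} the set $\{\Delta_{B^l}(Q):l\in\ZZ\}$ lies within bounded distance of an arithmetic progression in $\RR$ whose common difference has absolute value either $(\om-1)|y_i|$ or $|y_r+y_i-1|\ge\tfrac1N$; choosing $l$ with $x_i+\Delta_{B^l}(Q)$ as close to $0$ as half that common difference allows, one gets $|x_{B^l,i}|\le\tfrac12(\om-1)|y_i|+1$ (respectively $\le\tfrac12|y_r+y_i-1|+1$), which together with $|x_i|\ge|y_i|$ gives $s(B^l\circ Q)<s(Q)$. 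If instead $Q$ is periodic under $B$ but not under $A$, then $0\le y_i<1$ by \autoref{lem:BPer}, so $|x_i|$ alone is large; here one first applies a power of $A$ (legitimate because $Q$ is not periodic under $A$, using \autoref{lem:APer} and \eqref{eqn:DeltaAk}) to produce a point with $|y_i|$ of size roughly $\om|x_i|$ and $|x_i|$ unchanged, and then reduces again as before, interchanging the two directions as needed, once or twice more, so that the net effect is $s<s(Q)$ -- the contraction over such a cycle is forced by the quadratic irrationality of $\om$ (it is controlled by quantities like $(\om-1)(2-\om)=3\om-4\in(0,1)$), which is the same arithmetic that makes these Veech group elements behave like steps of a continued-fraction expansion. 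The case that $Q$ is periodic under both $A$ and $B$ need not be considered, since then $s(Q)<2$ by Lemmata \ref{lem:APer} and \ref{lem:BPer}.

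Iterating this step from an arbitrary $Q\in\PN$ produces a strictly decreasing sequence of values in $\tfrac1N\ZZ_{\ge0}$, which must be finite, so after finitely many steps one lands in $\PN\cap\{s\le C\}$; this proves the claim and hence the theorem. The difficulty I expect is entirely in the bookkeeping of the reduction, not in any new idea: the mod-operations in \eqref{eqn:Ak} and \eqref{eqn:Bl} contribute an error term $r\in(-1,1)$ to each $\Delta$; a point may lie in any of the lower/upper and left/right cylinders, so every move splits into subcases; $L_8$ is not symmetric under exchanging the two directions; and the points periodic under $A$ or $B$ must be moved with words of length two or three rather than a single generator. All of this is of the same nature as the estimates already carried out in \autoref{lem:APerPointsBigger}--\autoref{lem:DreiVonVierLemma} and in the proof of \autoref{ZweiMoeglZshgkomp}, so it should be long but routine.
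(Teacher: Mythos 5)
Your proposal is correct and is essentially the paper's own argument: a descent ("continued-fraction") algorithm in powers of $A$ and $B$, choosing which generator to apply by comparing $|x_i|$ and $|y_i|$, treating $A$- or $B$-periodic points by a short detour through the other generator, and terminating in the finite set of points of bounded irrational parts, exactly as in \autoref{alg:Alg}. The only differences are in execution, not strategy: you measure complexity by $s=|x_i|+|y_i|$ instead of $\max(|x_i|,|y_i|)$, you pick the exponent so that $x_i+\Delta_{B^l}$ lands within half the spacing of $0$ rather than the paper's small-step choice $k=\lceil 1/(|x_i|w)\rceil$ with a sign comparison, and for periodic points you argue a multi-step contraction where the paper verifies the explicit words $A^{-1}B^{-1}A$ and $B^{-1}A^{-1}B$ with the explicit constant $\Grenze$; the bookkeeping you defer (cylinder subcases, the $r\in(-1,1)$ errors, and checking via \autoref{lem:APer} and \autoref{lem:BPer} that intermediate points with large $|x_i|$ or $|y_i|$ are automatically non-periodic) is indeed routine and is carried out in the paper.
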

\addtocounter{theorem}{-1}
}

For the proof we give a ``continued fraction''-like algorithm that uses the elements $A=\smatr{1}{0}{\sqrt{2}}{1}$ and $B=\smatr{1}{\sqrt{2}}{0}{1}$ to connect every connection point with this fixed $N$ with a point of a finite set. A similar algorithm should also work for $D=5$, because also the Veech group $\SL(L_5)$ is generated by the vertical and horizontal parabolic elements. For bigger $D$ it might also be true that there are only finitely many $\SL(L_D)$-orbits of connection points with fixed $N$, but the subgroup $\langle A,B\rangle < \SL(L_D)$ is too small for the algorithm to still work.

One can easily check that the action of $A$ and $B$ (see \autoref{subs:AandB}) can be described as follows:

$A^{\pm 1} \circ \PointL{x_r+x_i\sqrt{2}}{y_r+y_i\sqrt{2}}=\PointL{x_r+x_i\sqrt{2}}{y_r'+y_i'\sqrt{2}}$ with
\begin{align*}
y_r' &= y_r\pm 2x_i\mp t_{A,r} \textnormal{ with } t_{A,r}\in\{0,1,2\}\\
y_i' &= y_i\pm x_r\mp t_{A,i} \textnormal{ with } t_{A,i}\in\{0,1\}
\end{align*}

$B	^{\pm 1} \circ \PointL{x_r+x_i\sqrt{2}}{y_r+y_i\sqrt{2}}=\PointL{x_r'+x_i'\sqrt{2}}{y_r+y_i\sqrt{2}}$ with
\begin{align*}
x_r' &= x_r\pm y_r \pm 2y_i\mp t_{B,r} \textnormal{ with } t_{B,r}\in\{0,1,2\}\\
x_i' &= x_i\pm y_r \pm y_i \mp t_{B,i} \textnormal{ with } t_{B,i}\in\{0,1\}
\end{align*}

We start with finding a lower bound on the number of orbits in $\PN$ with fixed $N$. We define the following equivalence relation on the points $P=\PointL{x_r+x_i\sqrt{2}}{y_r+y_i\sqrt{2}}$ with the four components $x_r,x_i,y_r$ and $y_i$ reduced fractions with least common denominator $N$. For all such points expand the four components to fractions with $N$ as denominator. Then two points are equivalent, if and only if the two vectors consisting of the numerators modulo $N$ are the same. This yields the set \[V=\{[a,b,c,d]\in \left(\quotient{\ZZ}{N\ZZ}\right)^4 \mid \gcd(a,b,c,d,N)=1\}\] as quotient. Since all the $t_{A/B,r/i}$ in the description of the action of $A$ and $B$ are integers, adding them corresponds to adding multiples of $N$ to the numerators of the expanded fractions and the action descends to an action on $V$. Let $G_N$ be the graph describing this action, i.e. the following graph:
\begin{itemize}
\item The vertex set is $V=\{[a,b,c,d]\in \left(\quotient{\ZZ}{N\ZZ}\right)^4 \mid \gcd(a,b,c,d,N)=1\}$.
\item The edges $E$ are labeled with $A$ or $B$ and go from $v\in V$ to $v'\in V$, if and only if for any point $P$ represented by $v$ the point $A\circ P$ respectively $B\circ P$ is represented by $v'$.
\end{itemize}

Obviously, the number of connected components $C(N)$ of this graph $G_N$ is a lower bound on the number of orbits of the action of $\SL(L_8)$ for fixed $N$, since $\SL(L_8)=\langle A,B\rangle$. The graph $G_2$ is illustrated in \autoref{pic:ZweiZweiGraph}.

\begin{figure}[h!]
\begin{center}
\includegraphics{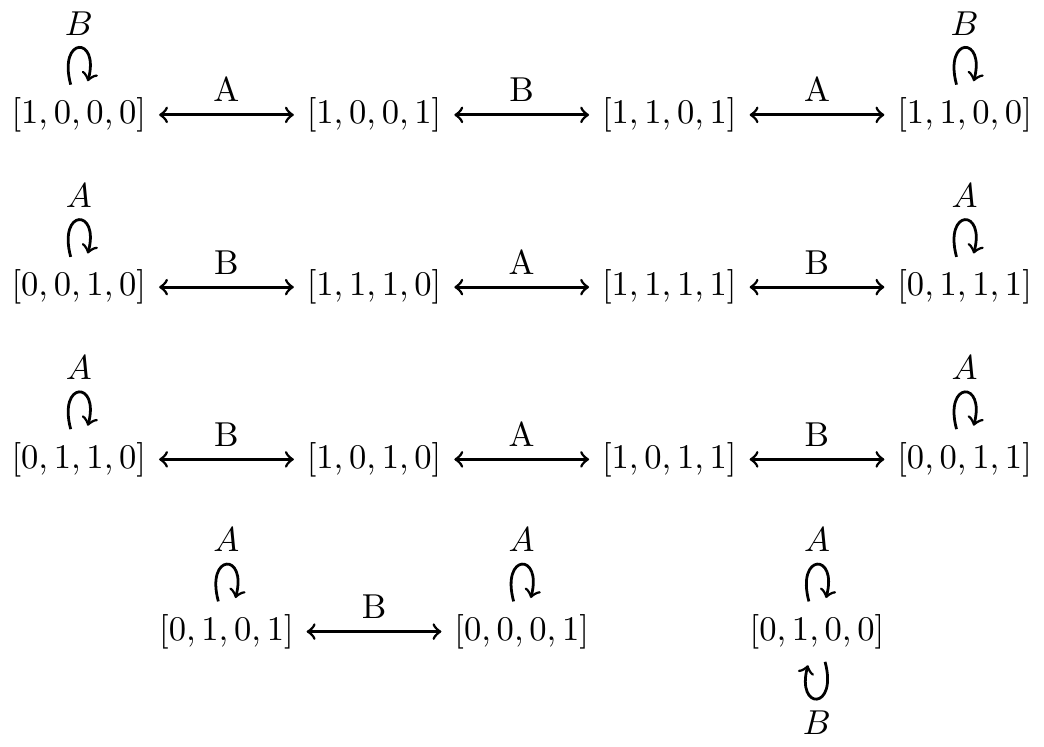}
\caption{The graph $G_2$ with $5$ connected components.}
\label{pic:ZweiZweiGraph}
\end{center}
\end{figure}

\begin{rem}
The action on $V$ can be described as follows:
\begin{align*}
A\circ [a,b,c,d]&=[a,b,c+2b,d+a] \mod N\\
B\circ [a,b,c,d]&=[a+c+2d,b+c+d,c,d] \mod N
\end{align*}
So the edges can be computed very easily, but notice that the vertex set $V$ grows quite fast for growing $N$: it has approximately $N^4$ elements.
\end{rem}

Let us now prove the main result of this section.

\begin{proof}[Proof of \autoref{thm:Orbits}.]
Consider the following set of points of $L_8$: \[S=\left\{\PointL{x_r+x_i\om}{y_r+y_i\om}\in L_8 \mid |x_i|,|y_i|\leq \Grenze\right\}.\] 

Since $N$ is fixed, there are only finitely many possible values of $x_i$ and $y_i$. Moreover, the fact that $0\leq x=x_r+x_iw < 1+w$ and $0\leq y=y_r+y_iw < w$ implies that also the number of possible values of $x_r$ and $y_r$ is bounded. Thus the set $S$ is finite.

In \autoref{alg:Alg} we describe how to find a word $W$ in $A^\pm$ and $B^\pm$ such that $W\circ P\in S$, when a non-periodic point $P=\PointL{x_r+x_iw}{y_r+y_iw}\in \PN$ is given as input. Note that there are only $6$ periodic points (\autoref{thm:periodic}) and all of them already belong to $S$. Then $|S|$ is an upper bound on the number of orbits for fixed $N$.

\begin{algorithm}[h!]
\SetAlgoLined
\KwIn{A non-periodic point $P=\PointL{x_r+x_iw}{y_r+y_iw}$ with $x_r,x_i,y_r,y_i$ reduced fractions with least common denominator $N$}
\KwOut{A word $W$ in $A^\pm$ and $B^\pm$ with $W\circ P\in S$}
$W\leftarrow$ empty word\;
\While{$P\notin S$}{
	\lIf{$P$ is periodic under $B$\tcp*{Case $1$}}{
	\Indp$P\leftarrow A\inv B\inv A \circ P$, \quad
	$W \leftarrow \concat(A\inv B\inv A, W)$\;
	}
	\Indm\lElseIf{$P$ is periodic under $A$\tcp*{Case $2$}}{
	\Indp$P\leftarrow B\inv A\inv B \circ P$, \quad
	$W \leftarrow \concat(B\inv A\inv B, W)$\;
	}\Indm
	\lElseIf{$|x_i|<|y_i|$ \tcp*{Case $3$}}{
		\Indp\lIf{$x<1$}{$k\leftarrow\lceil \frac{1}{|x_i|w}\rceil$}
		\lElse{$k\leftarrow 1$}\;
		\lIf{$|y_{A^k,i}|\leq |y_{A^{-k},i}|$}{$P\leftarrow A^{k}\circ P$, \quad $W\leftarrow \concat(A^k,W)$}\;
		\lElse{$P\leftarrow A^{-k}\circ P$, \quad $W\leftarrow \concat(A^{-k},W)$}
	}\;
	\Indm
	\lElseIf{$|x_i|\geq|y_i|$ \tcp*{Case $4$}}{
		\Indp\lIf{$y<1$}{$l\leftarrow\lceil \frac{1}{|y_i|(w-1)}\rceil$}
		\lElse{$l\leftarrow 1$}\;
		\lIf{$|x_{B^l,i}|\leq |x_{B^{-l},i}|$}{$P\leftarrow B^{l}\circ P$,\quad $W\leftarrow \concat(B^l,W)$}\;
		\lElse{$P\leftarrow B^{-l}\circ P$, \quad $W\leftarrow \concat(B^{-l},W)$}
	}\;	
}
\Return{$W$}
\caption{Finding a word $W$ mapping $P$ to an element of $S$.}
\label{alg:Alg}
\end{algorithm}

Obviously, the conditions $|x_i|<|y_i|$ and $|x_i|\geq|y_i|$ of case $3$ and $4$, respectively, guarantee that all points are covered by the algorithm. To see that the algorithm terminates and thus yields a word $W$ with $W\circ P\in S$ we consider the values $|x_i|$, $|y_i|$ and $m\coloneqq \max (|x_i|,|y_i|)$ and will prove that in all cases -- with one exception -- $m$ is reduced. The exception is in case $4$, if $|x_i|=|y_i|$: since $B$ does not change $y$, after applying $B^{\pm l}$ the value of $m$ will still be $|y_i|$, but then, in the next step $P$ will be in case $3$ or $2$ and thus $m$ will be reduced. Hence after finitely many steps we get a point in the orbit of $P$, which is in $S$.

In the following we will often use two inequalities that hold for every point in $L_8$:
\begin{align}
\label{xAbsch}0\leq  x_r+x_i\om & < 1+\om\\
\label{yAbsch}0\leq  y_r+y_i\om & < \om
\end{align}

\paragraph{Case 1:} Let us begin with the case that \emph{$P\notin S$ is periodic under $B$}. As seen in \autoref{subsect:periodic}, in this case for the \ip \ $y_i$ the inequalities $0\leq y_i<1$ hold. Since $P$ is not in $S$, the absolute value of $x_i$ is at least $\Grenze$ and $m=|x_i|$.

The algorithm tells us to compute $A\inv B\inv A\circ P$. We will do this step by step, beginning with $A\circ P$:
\[A\circ P=\PointL{x}{y_r+2x_i-t_1+(y_i+x_r-t_2)\om} \textnormal{ with } t_1\in\{0,1,2\} \textnormal{ and } t_2\in\{0,1\}.\]
The $y$-coordinate of $B\inv A\circ P$ is the same as for $A\circ P$ and the $x$-coordinate has
\begin{align*}
\textnormal{\rp \ } & x_r'= -x_r-2x_i-y_r-2y_i+t_1+2t_2+t_3 \textnormal{ with } t_3\in\{0,1,2\} \textnormal{ and}\\
\textnormal{\ip \ } & x_i'=-x_r-x_i-y_r-y_i+t_1+t_2+t_4 \textnormal{ with } t_4\in\{0,1\}.
\end{align*}
The last step -- applying $A\inv$ -- does not change the $x$-coordinate, but the $y$-coordinate: Indeed, if we write the point $A\inv B\inv A\circ P$ in the form $\PointL{x_r'+x_i'\om}{y_r'+y_i'\om}$, we get $x_r'$ and $x_i'$ as above and for $y$:
\begin{align*}
y_r'&=2x_r+4x_i+3y_r+2y_i-3t_1-2t_2-2t_4+t_5  \textnormal{ with } t_5\in\{0,1,2\} \textnormal{ and}\\
y_i'&=2x_r+2x_i+y_r+3y_i-t_1-3t_2-t_3+t_6  \textnormal{ with } t_6\in\{0,1\}.
\end{align*}

We have to show that $m':=\max(|x_i'|,|y_i'|)<m=\max(|x_i|,|y_i|)=|x_i|$ and we do this by showing:
\begin{itemize}
\item[a)] if $x_i>\Grenze$, then $|x_i'|<|x_i|$.
\item[b)] if $x_i<-(\Grenze)$, then $|x_i'|<|x_i|$.
\item[c)] if $x_i>\Grenze$, then $|y_i'|<|x_i|$.
\item[d)] if $x_i<-(\Grenze)$, then $|y_i'|<|x_i|$.
\end{itemize}
For $|x_i|>\Grenze$ we know $\sgn(x_r+x_i)=-\sgn(x_i)$. Hence $|x_r+x_i|=\sgn(x_i)(x_r+x_i)$.

\begin{itemize}
\item[a)] $x_i>\Grenze>\frac{4}{2-w}$: This implies $x_i>wx_i-x_i+4$ and with \eqref{xAbsch} \[x_i>-x_r-x_i+4.\] 

Furthermore, we know that the possible values of $-y_r-y_i+t_1+t_2+t_4$ lie between $0$ and $4$. Hence we can estimate
\[|x_i|=x_i>-x_r-x_i+4\geq |-x_r-x_i|+|-y_r-y_i+t_1+t_2+t_4|\geq |x_i'|.\]
\item[b)] $x_i<-(\Grenze)<-\frac{5+w}{2-w}$: This inequality together with \eqref{xAbsch} implies \[-x_i>-x_iw+1+w + x_i+4>x_r+x_i+4\] and
\[|x_i|=-x_i>|-x_r-x_i|+|-y_r-y_i+t_1+t_2+t_4|\geq |x_i'|.\]

\item[c)] $x_i>\Grenze>\frac{7}{3-2w}$: Together with \eqref{xAbsch} this inequality implies \[x_i>2x_iw-2x_i+7\geq -2x_r-2x_i+7.\] Since the value of $y_r+3y_i-t_1-3t_2-t_3+t_6$ is between $-7$ and $4$ we can estimate
\[|x_i|=x_i>|2x_r+2x_i|+|y_r+3y_i-t_1-3t_2-t_3+t_6|\geq |y_i'|.\]

\item[d)] $x_i<-(\Grenze)=-\frac{9+2w}{3-2w}$: Together with \eqref{xAbsch} this implies \[-x_i>2+2w-2x_iw+2x_i+7>2x_r+2x_i+7\] and
\[|x_i|=-x_i>|2x_r+2x_i|+|y_r+3y_i-t_1-3t_2-t_3+t_6|\geq |y_i'|.\]
\end{itemize}

This completes case $1$.

\paragraph{Case 2:} Now \emph{$P\notin S$ is periodic under $A$} and we have to compare $P$ with $B\inv A\inv B \circ P$. The latter is of the form

\begin{align*}
x_r'=& 3x_r+2x_i+4y_r+6y_i-3t_1-2t_2-t_3-2t_4+t_5 \\
x_i'=&x_r+3x_i+3y_r+4y_i-t_1-3t_2-t_3-t_4+t_6 \\
y_r'=&-2x_i-y_r-2y_i+2t_2+t_3 \\
y_i'=&-x_r-y_r-y_i+t_1+t_4\\
\textnormal{with } & t_1,t_3,t_5\in\{0,1,2\} \textnormal{ and } t_2,t_4,t_6 \in \{0,1\}
\end{align*}

Since in this case all computations are very similar to the previous case -- just use \autoref{yAbsch} instead of \autoref{xAbsch} -- we will just give the bounds, that guarantee $m'<m$. Note that points periodic under $A$ have $|x_i|<1$ and thus $m=|y_i|$. Furthermore we use $\sgn(3y_r+4y_i)=-\sgn(y_i)=\sgn(y_r+y_i)$.

\begin{itemize}
\item[a)] if $y_i>\Grenze>\frac{8}{5-3w}$, then $|x_i'|<|y_i|$.
\item[b)] if $y_i<-(\Grenze)<-\frac{8+3w}{5-3w}$, then $|x_i'|<|y_i|$.
\item[c)] if $y_i>\Grenze>\frac{3}{2-w}$, then $|y_i'|<|y_i|$.
\item[d)] if $y_i<-(\Grenze)<-\frac{3+w}{2-w}$, then $|y_i'|<|y_i|$.
\end{itemize}

\paragraph{Case 3:} Now $|x_i|<|y_i|$ and we want to show that the power of $A$ given by the algorithm reduces $|y_i|$ and thus reduces $m$.

Remember that the difference between the \ip s of the $y$-coordinate of $P$ and of $A^k\circ P$
 is denoted by $\Delta_{A^k}(P)$ and by \eqref{eqn:DeltaAk} 
 this difference is
\begin{equation*}
\Delta_{A^k}(P)=\begin{cases}
-kx_iw -r \text{ for an } r\in (-1,1) & \text{ if } x\leq 1\\
k(x_r - 1) & \textnormal{ if } x>1.
\end{cases}
\end{equation*}

\begin{itemize}
\item If $x\leq 1$ and $k=\lceil \frac{1}{|x_i|w}\rceil=\frac{1}{|x_i|w}+s$ for an $s\in (0,1)$ we get \[\Delta_{A^k}(P)=-\sgn(x_i)-sx_iw-r_+ \quad \textnormal{and} \quad \Delta_{A^{-k}}(P)=\sgn(x_i)+sx_iw-r_-.\] Since $|r|<1$, the signs of $\Delta_{A^k}(P)$ and $\Delta_{A^{-k}}(P)$ are distinct: one is positive, the other is negative. Moreover, $|\Delta_{A^{\pm k}}(P)|\leq 1+s|x_i|w+1<2|y_i|$. Thus either $A^k$ or $A^{-k}$ reduces the absolute value $|y_i|$ and so $m$.
\item If $x>1$ and $k=1$, we have $\Delta_{A^{\pm k}}(P)=\pm(x_r-1)$. Since $P$ is not periodic under the action of $A$, the \rp \ $x_r\neq 1$. Moreover, \[|\Delta_{A^{\pm k}}(P)|\leq |x_r|+1<1+w+w|x_i|+1<2|y_i|.\] Thus either $A$ or $A^{-1}$ reduces the absolute value $|y_i|$ and so $m$.
\end{itemize}

\paragraph{Case 4:} Now $|y_i|\leq |x_i|$ and we have to show that the power of $B$ given by the algorithm reduces $|x_i|$. Remember \autoref{eqn:DeltaBl}:
\begin{equation*}
\Delta_{B^l}(P)=\begin{cases}
ly_i(1-\om) -r \text{ for an } r\in (-1,1) & \text{ if } y\leq 1\\
l (y_r + y_i - 1) & \textnormal{ if } y>1.
\end{cases}
\end{equation*}

\begin{itemize}
\item If $y\leq 1$ and $l=\lceil \frac{1}{|y_i|(w-1)}\rceil=\frac{1}{|y_i|(w-1)}+s$ for an $s\in (0,1)$ we get \[\Delta_{B^l}(P)=-\sgn(y_i)-sy_i(w-1)-r_+ \quad \textnormal{and} \quad \Delta_{B^{-l}}(P)=\sgn(y_i)+sy_i(w-1)-r_-.\] Since $|r|<1$, the signs of $\Delta_{B^l}(P)$ and $\Delta_{B^{-l}}(P)$ are distinct. Moreover, \[|\Delta_{B^{\pm l}}(P)|\leq 1+s|y_i|(w-1)+1<2|y_i|\leq 2 |x_i|.\] Thus either $B^l$ or $B^{-l}$ reduces $|x_i|$.
\item If $y>1$ and $l=1$, we have $\Delta_{B^{\pm l}}(P)=\pm(y_r+y_i-1)$. Since $P$ is not periodic under $B$, the change $\Delta_{B^{\pm l}}(P)$ is not $0$. Moreover, $y_r$ and $y_i$ have different signs and $|y_i|<|y_r|$ unless both $|y_r|$ and $|y_i|$ are 
 much smaller than $\Grenze$. Hence $|\Delta_{B^{\pm l}}(P)| < 2|x_i|$. Thus either $B$ or $B^{-1}$ reduces $|x_i|$.
\end{itemize}

This completes the proof, since in all cases $m=\max(|x_i|,|y_i|)$ is reduced: in the case $|x_i|=|y_i|$ after two steps, otherwise in each step. This finally leads to a point with $|x_i|,|y_i|<\Grenze$, which is in the finite set $S$.
\end{proof}

\begin{rem}
Points in $S$ can have approximately $((\Grenze)\cdot 2N)^2$ different values for $x_i$ and $y_i$ and for each such pair there might be up to $((1+w)\cdot N)\cdot(w\cdot N)$ possible values for $x_r$ and $y_r$. Thus the upper bound on the number of orbits with fixed $N$ given by the theorem is about $(\Grenze)^2 (w+2) N^4=(8114+5737w) N^4\approx 16227 N^4$. This is a very bad upper bound, supposably the correct number of orbits is very close (or equal) to the lower bound given by the number of connected components of the graph described in the beginning of this section. In particular for $N=1$ with a little bit more work one can show that, indeed, all non-periodic points are in the same orbit. This means that for all non-periodic points $P=\PointL{x_r+x_iw}{y_r+y_iw}$ with $x_r,x_i,y_r,y_i\in\ZZ$ the groups $\SL(L_8;P)$ are conjugate and have the same critical exponent.
\end{rem}

\printbibliography

\end{document}